\newcommand{\lvt}{\left|\kern-1.35pt\left|\kern-1.3pt\left|}
\newcommand{\rvt}{\right|\kern-1.3pt\right|\kern-1.35pt\right|}
\newtheorem{thm}{Theorem}[section]
\newtheorem{cor}[thm]{Corollary}
\newtheorem{lem}[thm]{Lemma}
\newtheorem{prop}[thm]{Proposition}
\theoremstyle{remark}
\newcommand{\secref}[1]{\S\ref{#1}}
 \def\la{{\langle}}
 \def\ra{{\rangle}}
 \def\a{{\alpha}}
 \def\b{{\beta}}
 \def\g{{\gamma}}
 \def\t{{\theta}}
 \def\d{{\delta}}
 \def\s{\sigma}
 \def\la{{\langle}}
 \def\ra{{\rangle}}
 \def\CB{{\mathcal B}}
 \def\CH{{\mathcal H}}
 \def\CP{{\mathcal P}}
 \def\CV{{\mathcal V}}
 \def\RR{{\mathbb R}}
\def\lla{\langle{\kern-2.5pt}\langle}      
\def\rra{\rangle{\kern-2.5pt}\rangle}
\newcommand{\wh}{\widehat}
\def\f{\frac}
\def\cal#1{\mathcal{#1}}
\def\ds{{\rm d}s}
\def\I{{\rm i}}
\def\vc#1{{\mathbf #1}}
\begin{document}

\title{Orthogonal structure on a wedge and on the boundary of a square}

\author{Sheehan Olver}
\address{Department of Mathematics\\
Imperial College\\
 London \\
 United Kingdom  }\email{s.olver@imperial.ac.uk}

\author{Yuan Xu}
\address{Department of Mathematics\\ University of Oregon\\
    Eugene, Oregon 97403-1222.}\email{yuan@uoregon.edu}

\thanks{The second author was supported in part by NSF Grant DMS-1510296.}
\thanks{Communicated by Alan Edelman.}

\date{\today}
\keywords{Orthogonal polynomials, wedge, boundary of square, Fourier orthogonal expansions}
\subjclass[2000]{42C05, 42C10, 33C50}

\begin{abstract} 
Orthogonal polynomials with respect to a weight function defined on a wedge in the plane are studied. A
basis of orthogonal polynomials is explicitly constructed for two large class of weight functions and the convergence
of Fourier orthogonal expansions is studied. These are used to establish analogous results for orthogonal polynomials 
on the boundary of the square. As an application, we study the statistics of the associated determinantal point process and use the basis to calculate Stieltjes transforms.
\end{abstract}

\maketitle

\section{Introduction}
\setcounter{equation}{0}

Let $\Omega$ be a wedge on the plane that consists of two line segments sharing a common endpoint. For 
a positive measure $d\mu$ defined on $\Omega$, we study orthogonal polynomials of two variables with respect to 
the bilinear form 
$$
       \la f, g\ra = \int_{\Omega} f(x,y) g(x,y) d\mu.
$$
We also study orthogonal polynomials on the boundary of a parallelogram. Without loss of generality we can 
assume that our wedge is of the form
\begin{equation}\label{eq:1.1}
    \Omega = \{(x_1,1): x_1 \in [0,1]\} \cup  \{(1,x_2): x_2 \in [0,1]\} 
\end{equation}
and consider the bilinear form defined by 
\begin{equation}\label{eq:1.2}
  \la f, g\ra = \int_0^1 f(x,1)g(x,1) w_1(x) dx + \int_0^1 f(1,y)g(1,y) w_2(y) dy.
\end{equation}

Since $\Omega$ is a subset of the zero set of a quadratic polynomial $l_1(x,y) l_2(x,y)$, where $l_1$ and $l_2$ 
are linear polynomials, the structure of orthogonal polynomials on $\Omega$ is very different from that of ordinary 
orthogonal polynomials in two variables \cite{DX} but closer to that of spherical harmonics. The latter are defined 
as homogeneous polynomials that satisfy the Laplace equation $\Delta Y = 0$ and are orthogonal on the unit circle, 
which is  the zero set of the  quadratic polynomial $x^2 + y^2-1$. The space of spherical polynomials of degree $n$ has 
dimension 2 for each $n \ge 1$ and, furthermore, one basis of spherical harmonics when restricted on the unit circle 
are $\cos  n \theta$ and $\sin n \theta$, in polar coordinates $(r,\t)$, and the Fourier orthogonal expansions in 
spherical harmonics coincide with the classical Fourier series. 

In \secref{Section:Wedge}, we consider orthogonal polynomials on a wedge. The space of orthogonal polynomials of degree $n$ has dimension 2 for each $n \ge 1$, just like that of 
spherical harmonics, and they satisfy the equation $\partial_1 \partial_2 Y = 0$.   The main results are
\begin{itemize}
\item An explicit expression in terms of univariate orthogonal polynomials when  $w_1(x) = w_2(x) =w(x)$ where $w$ is any weight function on $[0,1]$ (Theorem~\ref{thm:ipd-w-op}), 
\item  Sufficient conditions for  pointwise and uniform convergence    (Theorem~\ref{thm:pointwise}),  as well as normwise convergence (Corollary~\ref{cor:normwiseconv}),
\item Explicit expression in terms of Jacobi polynomials when $w_1(x) = w_{\a,\g}(x)$ and $w_2(x) = w_{\b,\g}(x)$ (Theorem~\ref{thm:OP-ipd-abg}),
\item  Sufficient conditions for  normwise convergence     (Theorem~\ref{thm:3.4}).
\end{itemize}


In \secref{Section:BoundarySquare} we study orthogonal polynomials on the boundary of a parallelogram, which we can assume as 
the square $[-1,1]^2$ without loss of generality. For a family of generalized Jacobi weight functions that are symmetric in 
both $x$ and $y$, we are able to deduce an orthogonal basis in terms of four families of orthogonal bases on the wedge
in Theorem~\ref{thm:boundaryOP}. Furthermore, the convergence of the Fourier orthogonal expansions can also be deduced in this fashion,
as shown in Theorem~\ref{thm:squareconv}. 

In \secref{Section:Square} we use orthogonal polynomials on the boundary of the square to construct an orthogonal 
basis for the weight function $w(\max\{|x|,|y|\})$ on the square $[-1,1]^2$. 
This mirrors the way in which spherical harmonics can be used to construct a basis of orthogonal polynomials for the weight function $w(\sqrt{x^2+y^2})$
on the unit disk.  However, unlike the unit disk, the orthogonal 
basis we constructed are no longer polynomials in $x,y$ but are polynomials of $x, y$ and $s=\max\{|x|,|y|\}$.


The study is motivated by applications. In particular, we wish to investigate how the applications of univariate orthogonal polynomials can be translated to multivariate orthogonal polynomials on curves. As a motivating example, univariate orthogonal polynomials give rise to a determinantal point process that is linked to the eigenvalues of unitary ensembles of random matrix theory. In \secref{sec:detpointprocess}, we investigate the statistics of the determinantal point process generated from  orthogonal polynomials on the wedge, and find experimentally that they have the same local behavior
as a Coulomb gas away from the corners/edges. 

In Appendix~\ref{sec:JacobiOperators}, we give the Jacobi operators associated with a special case of weights on the wedge, whose entries are rational. 
Finally, in Appendix~\ref{sec:Stieltjes} we show that the Stieltjes transform of our family of orthogonal polynomials satisfies a recurrence that can be built out of the Jacobi operators of the orthogonal polynomials, which can in turn be used to compute Stieltjes transforms numerically. This is a preliminary step towards using these polynomials for solving singular integral equations.

\section{Orthogonal polynomials on a wedge}\label{Section:Wedge}
\setcounter{equation}{0}

Let $\CP_n^2$ denote the space of homogeneous polynomials of degree $n$ in two variables; that is,  
$\CP_n^2 =\mathrm{span}\,  \{x^{n-k}y^k: 0 \le k \le n\}$. Let $\Pi_n^2$ denote the space of polynomials
of degree at most $n$ in two variables. 

\subsection{Orthogonal structure on a wedge}
Given three non-collinear points, we can define a wedge by fixing 
one point and joining it to other points by line segments. We are interested in orthogonal polynomials
on the wedge. Since the three points are non-collinear, each wedge can be mapped to 
$$ 
    \Omega = \{(x_1,1): x_1 \in [0,1]\} \cup  \{(1,x_2): x_2 \in [0,1]\}
$$
by an affine transform. Since the polynomial structure and the orthogonality are preserved under the affine 
transform, we can work with the wedge $\Omega$ without loss of generality. Henceforth we work only on
$\Omega$. 

Let $w_1$ and $w_2$ be two nonnegative weight functions defined on $[0,1]$. We consider the bilinear form 
define on $\Omega$ by
\begin{equation}\label{eq:ipd-w}
     \la f,g\ra_{w_1,w_2} := \int_0^1 f(x,1) g(x,1)w_1(x) dx +  \int_0^1 f(1,y) g(1,y)w_2(y) dy. 
\end{equation}
Let $I$ be the polynomial ideal of $\RR[x,y]$ generated by $(1-x)(1-y)$. If $f \in I$, then $\la f, g\ra_{w_1,w_2} =0$
for all $g$. The bilinear form defines an inner product on $\Pi_n^2$, modulo $I$, or equivalently, on the quotient
space $\RR[x,y]/I$. 

\begin{prop}
Let $\CH_n^2(w_1,w_2)$ be the space of orthogonal polynomials of degree $n$ in $\RR[x,y]/I$.  
Then 
$$
   \dim \CH_0^2(w_1,w_2)=1 \quad\hbox{and} \quad  \dim \CH_n^2(w_1,w_2) =2, \quad n \ge 1.
$$
Furthermore, we can choose polynomials in $\CH_n^2(w_1,w_2)$ to satisfy $\partial_x \partial_y p = 0$. 
\end{prop}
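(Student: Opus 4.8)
The plan is to work in the quotient ring $\RR[x,y]/I$ where $I=\langle(1-x)(1-y)\rangle$, and to exhibit a convenient basis of each graded piece. First I would observe that modulo $I$ every monomial $x^iy^j$ with $i,j\ge 1$ can be reduced: from $(1-x)(1-y)\equiv 0$ we get $xy\equiv x+y-1$, and multiplying by $x^{i-1}y^{j-1}$ shows $x^iy^j$ is congruent to a polynomial in which the cross term has lower total degree. Iterating, every class in $\RR[x,y]/I$ is represented uniquely by a polynomial of the form $a_0+\sum_{k\ge 1}(a_k x^k+b_k y^k)$, i.e. a sum of a pure-$x$ part, a pure-$y$ part, and a constant. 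Hence the degree-$n$ part of $\RR[x,y]/I$ has dimension $1$ for $n=0$ (spanned by $1$) and dimension $2$ for $n\ge 1$ (spanned by the classes of $x^n$ and $y^n$). Since the bilinear form \eqref{eq:ipd-w} vanishes on $I$ and, restricted to each graded piece, is positive definite (it pairs the restriction of a polynomial to the two segments, and a nonzero element of $\RR[x,y]/I$ that is not in $I$ has nonzero restriction — one must check the form is genuinely an inner product on the quotient, which follows if $w_1,w_2$ are not a.e.\ zero), a Gram--Schmidt argument gives $\dim\CH_n^2(w_1,w_2)=\dim(\text{degree-}n\text{ piece})$, namely $1$ for $n=0$ and $2$ for $n\ge 1$.

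For the last assertion, I would note that the operator $\partial_x\partial_y$ kills exactly the representatives of the above normal form: if $p=a_0+\sum(a_kx^k+b_ky^k)$ then $\partial_x\partial_y p=0$ identically. So it suffices to show that within each congruence class modulo $I$ there is a representative in normal form, and moreover that a representative of a degree-$n$ orthogonal polynomial can be taken of degree exactly $n$ and in normal form. The reduction described above does exactly this: given any orthogonal polynomial $P$ of degree $n$, replace it by its normal-form representative $\widetilde P$; then $\widetilde P$ still lies in $\CH_n^2(w_1,w_2)$ (it is congruent to $P$, hence still orthogonal to all lower degrees with respect to $\langle\cdot,\cdot\rangle_{w_1,w_2}$), has the same degree, and satisfies $\partial_x\partial_y\widetilde P=0$. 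Choosing a basis of $\CH_n^2(w_1,w_2)$ and replacing each basis element by its normal form yields the desired basis.

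The one genuinely substantive point — the main obstacle — is verifying that $\langle\cdot,\cdot\rangle_{w_1,w_2}$ descends to a bona fide inner product on the graded pieces of $\RR[x,y]/I$, i.e.\ that it is nondegenerate there, so that the space of orthogonal polynomials of degree $n$ really has the full dimension $2$ and not less. This amounts to checking that a polynomial in normal form of degree $n$ which is $\langle\cdot,\cdot\rangle_{w_1,w_2}$-orthogonal to itself must be the zero class: its restrictions $f(x,1)$ and $f(1,y)$ are univariate polynomials, and if both integrate to zero against $w_1$ and $w_2$ respectively then (assuming the $w_i$ have infinitely many points of support, or at least enough to make the univariate forms positive definite in the relevant degrees) both restrictions vanish, which forces the normal-form representative to be zero. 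Everything else is the routine linear-algebra/Gram--Schmidt bookkeeping sketched above.
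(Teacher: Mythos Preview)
Your proposal is correct and follows essentially the same route as the paper: reduce via $xy\equiv x+y-1$ mod $I$ to the normal form $p(x)+q(y)$, count dimensions, apply Gram--Schmidt, and observe that $\partial_x\partial_y$ annihilates polynomials in normal form. The paper's proof is terser and does not spell out the nondegeneracy check you flag as the main obstacle, but otherwise the arguments coincide.
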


\begin{proof}
Since $(1-x)(1-y) \CP_{n-2}$ is a subset of $I$, the dimension of $\dim \CH_n^2(w_1,w_2) \le 2$. Applying 
the Gram--Schmidt process on $\{1, x^k, y^k, k \ge 1\}$ shows that there are two orthogonal 
polynomials of degree exactly $n$. Both these polynomials can be written in the form of 
$p(x) + q(y)$, since we can use $xy \equiv x+y-1$ mod $I$ to remove all mixed terms. Evidently 
such polynomials satisfy $\partial_x \partial_y (p(x) + q(y))=0$. 
\end{proof}

In the next two subsections, we shall construct an orthogonal basis of $\CH_n^2(w_1,w_2)$ for certain 
$w_1$ and $w_2$ and study the convergence of its Fourier orthogonal expansions. We will make use
of results on orthogonal polynomials of one variable, which we briefly record here. 

For $w$ defined on $[0,1]$, we let $p_n(w)$ denote an orthogonal polynomial of degree $n$ with respect to $w$,
and let $h_n(w)$ denote the norm square of $p_n(w)$, 
$$
  h_n(w) := \int_0^1 |p_n(w;x)|^2 w(x) dx.
$$ 
Let $L^2(w)$ denote the $L^2$ space with respect to $w$ on $[0,1]$. The Fourier orthogonal 
expansion of $f \in L^2(w)$ is defined by 
$$
  f = \sum_{n=1}^\infty \wh f_n(w) p_n(w) \quad \hbox{with} \quad  \wh f_n(w) = \frac1{h_n(w)} \int_0^1 f(y) p_n(w;y) w(y)dy. 
$$
The Parseval identity implies that 
$$
    \|f\|_{L^2(w,[0,1])}^2 = \sum_{n=0}^\infty \left|\wh f_n(w) \right|^2 h_n(w).
$$
The $n$-th partial sum of the Fourier orthogonal expansion with respect to $w$ can be written as an integral
\begin{equation} \label{eq:partial-sum}
  s_n(w;f)(x) := \sum_{k=1}^n \wh f_k(w) p_k(w;x) = \int_{-1}^1 f(y) k_n(w;x,y)w(y)dy, 
\end{equation}
where $k_n(w)$ denotes the reproducing kernel defined by 
\begin{equation} \label{eq:reprod-kernel}
   k_n(w;x,y) = \sum_{k=0}^n  \frac{p_k(w;x) p_k(w;y)}{h_k(w)}.
\end{equation}

\subsection{Orthogonal structure for $w_1=w_2$ on a wedge}
In the case of $w_1 = w_2 = w$, we denote the inner product \eqref{eq:ipd-w} by $\la \cdot,\cdot\ra_w$ and the space of
orthogonal polynomials by $\CH_n^2(w)$. In this case, an orthogonal basis for $\CH_n^2(w)$ can be constructed explicitly.

\begin{thm} \label{thm:ipd-w-op}
Let $w$ be a weight function on $[0,1]$ and let $\phi w(x): = (1-x)^2 w(x)$. Define 
\begin{align} \label{eq:PnQ}
\begin{split}
  P_n(x,y) & = p_n(w;x)+ p_n(w;y) - p_n(w;1), \quad n= 0,1,2,\ldots,\\
  Q_n(x,y) & =  (1-x) p_{n-1}(\phi w; x) - (1-y) p_{n-1}(\phi w; y), \quad n=1,2, \ldots. 
\end{split}
\end{align}
Then $\{P_n, Q_n\}$ are two polynomials in $\CH_n^2(w)$ and they are mutually orthogonal.  
Furthermore,
\begin{equation}\label{eq:PnQnorm}
  \la P_n, P_n\ra_w = 2 h_n(w) \quad\hbox{and}\quad  \la Q_n, Q_n\ra_w = 2 h_{n-1}(\phi w). \quad 
\end{equation}
\end{thm}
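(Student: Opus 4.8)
The plan is to push every orthogonality condition down to the two edges of $\Omega$, where $P_n$ and $Q_n$ become one--variable objects, and then read everything off from the classical orthogonality of $p_n(w)$ and $p_{n-1}(\phi w)$ on $[0,1]$. The first thing I would do is record the restrictions: on the edge $\{(x,1):x\in[0,1]\}$ one has $P_n(x,1)=p_n(w;x)$ and $Q_n(x,1)=(1-x)p_{n-1}(\phi w;x)$, while on $\{(1,y):y\in[0,1]\}$ one has $P_n(1,y)=p_n(w;y)$ and $Q_n(1,y)=-(1-y)p_{n-1}(\phi w;y)$. In particular $P_n$ is symmetric and $Q_n$ antisymmetric under the reflection $(x,y)\mapsto(y,x)$, which also leaves $\la\cdot,\cdot\ra_w$ invariant; this already gives $\la P_n,Q_n\ra_w=0$, or one may simply observe that the two edge integrals making up $\la P_n,Q_n\ra_w$ are equal and enter with opposite signs.

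To show $P_n,Q_n\in\CH_n^2(w)$ (for $n\ge 1$; $P_0$ is a nonzero constant), recall from the Proposition that every element of $\Pi_{n-1}^2$ is, modulo $I$, of the form $a(x)+b(y)$ with $\deg a,\deg b\le n-1$, and that $\la\cdot,\cdot\ra_w$ annihilates $I$; so it is enough to test against such $a(x)+b(y)$. For $P_n$ the expansion $\la P_n,a(x)+b(y)\ra_w=\int_0^1 p_n(w;x)\bigl(a(x)+b(1)\bigr)w(x)\,dx+\int_0^1 p_n(w;y)\bigl(a(1)+b(y)\bigr)w(y)\,dy$ vanishes term by term because $p_n(w)$ is orthogonal in $L^2(w)$ to $1,x,\dots,x^{n-1}$. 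For $Q_n$, collecting the $a$--part and the $b$--part separately produces integrals such as $\int_0^1 (1-x)p_{n-1}(\phi w;x)\bigl(a(x)-a(1)\bigr)w(x)\,dx$; here I would use the factorization $a(x)-a(1)=(1-x)\wt a(x)$ with $\deg\wt a\le n-2$, which converts $(1-x)^2w(x)$ into $\phi w(x)$ and turns the integral into $\int_0^1 p_{n-1}(\phi w;x)\wt a(x)\,\phi w(x)\,dx=0$; the $b$--part is handled the same way. Since $\deg P_n=\deg Q_n=n$ and neither vanishes identically on $\Omega$ (look at either edge), both belong to $\CH_n^2(w)$; being linearly independent (one symmetric, one antisymmetric) and $\dim\CH_n^2(w)=2$, they form a basis.

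The norm identities then drop out of the same restrictions: $\la P_n,P_n\ra_w=\int_0^1 p_n(w;x)^2w(x)\,dx+\int_0^1 p_n(w;y)^2w(y)\,dy=2h_n(w)$, and $\la Q_n,Q_n\ra_w=\int_0^1(1-x)^2p_{n-1}(\phi w;x)^2w(x)\,dx+\int_0^1(1-y)^2p_{n-1}(\phi w;y)^2w(y)\,dy=2h_{n-1}(\phi w)$, again using $(1-x)^2w(x)=\phi w(x)$. The whole argument is essentially bookkeeping on the two edges; the only step that needs a small idea is the factorization $a(x)-a(1)=(1-x)\wt a(x)$, since that is precisely what absorbs the extra factor $(1-x)^2$ of $\phi w$ and makes the orthogonality of $Q_n$ work.
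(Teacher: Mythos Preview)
Your argument is correct and follows essentially the same path as the paper: restrict to the two edges, use the symmetry/antisymmetry of $P_n$ and $Q_n$ under $(x,y)\mapsto(y,x)$ to get $\la P_n,Q_n\ra_w=0$, and reduce the remaining orthogonalities and norms to the one--variable orthogonality of $p_n(w)$ and $p_{n-1}(\phi w)$. The only organizational difference is that the paper tests $P_n$ and $Q_n$ directly against $P_m$ and $Q_m$ for $m\ne n$ rather than against a generic $a(x)+b(y)$, so the factor $(1-x)^2$ needed for $\phi w$ appears automatically from $Q_n(x,1)Q_m(x,1)$ and your factorization step $a(x)-a(1)=(1-x)\wt a(x)$ is never needed.
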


\begin{proof}
Since $P_n(x,1) = P_n(1,x)$ and $Q_n(x,1) = - Q_n(1,x)$, it follows that 
$$
\la P_n, Q_m \ra_{w} = \int_0^1 P_n(x,1) Q_m(x,1)  w(x)dx + \int_0^1 P_n(1,x) Q_m(1,x) w(x)dx =0
$$
for $n \ge 0$ and $m \ge 1$. Furthermore, 
$$
  \la P_n, P_m\ra_{w} = 2 \int_0^1 p_n(w;x) p_m(w; x) w(x) dx = 2 h_n(w)\delta_{n,m}
$$
by the orthogonality of $p_n(w)$. Similarly, 
$$
 \la Q_n, Q_m\ra_{w} = 2 \int_0^1 p_{n-1}(\phi w; x)p_{m-1}(\phi w; x)(1-x)^2 w(x)  dx = 2 h_{n-1}(\phi w) \delta_{n,m}. 
 $$
The proof is completed.
\end{proof}

Let $L^2(\Omega,w)$ be the space of Lebesgue measurable functions with finite 
$$
  \|f\|_{L^2(\Omega,w)}: = \la f,f\ra_w^{\f12} =  \left( \|f(\cdot, 1)\|_{L^2(w,[0,1])}^2 +  \|f(1,\cdot)\|_{L^2(w,[0,1])}^2\right)^{\f12}  
$$
norms. For $f \in L^2(\Omega, w)$, its Fourier expansion is given by 
$$
 f  =  \wh f_0 + \sum_{n=1}^\infty \left[  \wh f_{P_n}P_n + \wh f_{Q_n}Q_n  \right],
$$
where $P_n$ and $Q_n$ are defined in Theorem  \ref{thm:ipd-w-op} and
$$
 \wh f_0:= \frac{ \la f,1\ra_w}{\la 1,1\ra_w}, \qquad
 \wh f_{P_n}:= \frac{\la f,P_n \ra_w}{\la P_n,P_n \ra_w}, \qquad \wh f_{Q_n}:= \frac{\la f,Q_n \ra_w}{\la Q_n,Q_n \ra_w}.
$$
The partial sum operator $S_n f$ is defined by 
$$
  S_n f :=  \wh f_0 + \sum_{k=1}^n \left[  \wh f_{P_k}P_k + \wh f_{Q_k}Q_k  \right],
$$
which can be written in terms of an integral in terms of the reproducing kernel $K_n(\cdot,\cdot)$, 
$$
  S_n f(x_1,x_2) = \la f, K_n((x_1,x_2), \cdot) \ra_w,
$$
where 
$$
  K_n((x_1,x_2), (y_1,y_2)):= \frac{1}{\la 1,1\ra_w} + \sum_{k=1}^n \left[  \frac{P_k(x_1,x_2)P_k(y_1,y_2)}{\la P_k,P_k \ra_w}
       +  \frac{Q_k(x_1,x_2)Q_k(y_1,y_2)}{\la Q_k,Q_k \ra_w}  \right].
$$
We show that this kernel can be expressed, when restricted on $\Omega$, in terms of the reproducing kernel 
$k_n(w;\cdot,\cdot)$ defined at \eqref{eq:reprod-kernel}.

\begin{prop}\label{prop:repkernelsym}
The reproducing kernel $K_n(\cdot,\cdot)$ for $\la \cdot,\cdot\ra_w$ satisfies
\begin{align}
 K_n((x,1),(y,1))  &\, =  K_n((1,x),(1,y)) \\
                       &\, = \frac12 k_n(w;x,y) + \frac12 (1-x)(1-y)k_{n-1}(\phi w; x,y), \notag \\
 K_n((x,1),(1,y)) & \, = K_n((1,x),(y,1)) \\
      & \, = \frac12 k_n(w;x,y) - \frac12 (1-x)(1-y)k_{n-1}(\phi w; x,y).  \notag
\end{align}
\end{prop}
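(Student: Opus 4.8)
The plan is to prove both identities by a direct substitution: restrict the arguments of $K_n$ to the two edges of $\Omega$, plug in the explicit formulas \eqref{eq:PnQ} for $P_k$ and $Q_k$ together with the norms \eqref{eq:PnQnorm}, and recognize the resulting sums as the univariate reproducing kernels \eqref{eq:reprod-kernel}. First I would record the values of the basis polynomials on each edge. From \eqref{eq:PnQ} one has $P_k(x,1)=P_k(1,x)=p_k(w;x)$ and $Q_k(x,1)=-Q_k(1,x)=(1-x)p_{k-1}(\phi w;x)$ for every $k\ge 1$, and also $\la 1,1\ra_w = 2\int_0^1 w(x)\,dx = 2h_0(w)$, so the constant term $1/\la 1,1\ra_w$ equals $\tfrac12\, p_0(w;x)p_0(w;y)/h_0(w)$, i.e.\ exactly half of the $k=0$ summand in $k_n(w;x,y)$ (using that $p_0(w)$ is constant).

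Next I would substitute these into
\[
  K_n((x_1,x_2),(y_1,y_2)) = \frac{1}{\la 1,1\ra_w} + \sum_{k=1}^n \left[ \frac{P_k(x_1,x_2)P_k(y_1,y_2)}{2h_k(w)} + \frac{Q_k(x_1,x_2)Q_k(y_1,y_2)}{2h_{k-1}(\phi w)} \right].
\]
For $(x_1,x_2)=(x,1)$ and $(y_1,y_2)=(y,1)$ the $P$-terms, together with the constant term, sum to $\tfrac12 \sum_{k=0}^n p_k(w;x)p_k(w;y)/h_k(w) = \tfrac12 k_n(w;x,y)$, while the $Q$-terms sum to $\tfrac12 (1-x)(1-y)\sum_{k=1}^n p_{k-1}(\phi w;x)p_{k-1}(\phi w;y)/h_{k-1}(\phi w) = \tfrac12 (1-x)(1-y)\,k_{n-1}(\phi w;x,y)$; adding these gives the first identity. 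The evaluation at $(1,x),(1,y)$ produces the same expression because $P_k$ and $Q_k$ take the same values there up to a sign in $Q_k$ that cancels in the product $Q_k(1,x)Q_k(1,y)$. For the mixed arguments $(x,1)$ and $(1,y)$ the $P$-product $P_k(x,1)P_k(1,y)=p_k(w;x)p_k(w;y)$ is unchanged, but $Q_k(x,1)Q_k(1,y) = -(1-x)(1-y)p_{k-1}(\phi w;x)p_{k-1}(\phi w;y)$ now carries a minus sign (from $Q_k(x,1)=-Q_k(1,x)$), which yields the second identity; the equality $K_n((x,1),(1,y))=K_n((1,x),(y,1))$ is then just the symmetry $K_n(u,v)=K_n(v,u)$ of a reproducing kernel.

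I do not expect a real obstacle here, as the argument is a one-line substitution once the edge values of $P_k,Q_k$ are in hand. The only points that need a little care are the bookkeeping of the $k=0$ term — verifying that $1/\la 1,1\ra_w$ supplies precisely the missing $k=0$ summand of $\tfrac12 k_n(w;x,y)$ — and the tracking of the sign in the $Q_k$-products, which is exactly what distinguishes the same-edge kernel (a $+$) from the cross-edge kernel (a $-$).
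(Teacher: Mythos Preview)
Your proposal is correct and is essentially the same as the paper's proof: both simply evaluate $P_k$ and $Q_k$ on the edges using \eqref{eq:PnQ}, substitute into the definition of $K_n$ together with \eqref{eq:PnQnorm}, and recognize the resulting sums as $\tfrac12 k_n(w;\cdot,\cdot)$ and $\tfrac12(1-x)(1-y)k_{n-1}(\phi w;\cdot,\cdot)$. Your write-up is in fact a bit more careful than the paper's about the $k=0$ bookkeeping and the sign tracking in the cross-edge case, but the method is identical.
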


\begin{proof}
By \eqref{eq:PnQ} and \eqref{eq:PnQnorm},
\begin{align*}
 K_n((x,1),(y,1)) & \, = \frac{1} {2h_0(w)} +   \sum_{k=1}^n   \frac{p_k(w; x) p_k(w;y)}{2 h_k(w)} \\
             & \qquad\qquad 
                +  \sum_{k=1}^n \frac{(1-x)p_{k-1}(\phi w;x) (1-y) p_{k-1}(\phi w;y)}{ 2 h_{k-1}(\phi w)} \\
         &\, = \frac12 k_n(w;x,y) + \frac12 (1-x)(1-y)k_{n-1}(\phi w; x,y).
\end{align*}
The other case is established similar, using $Q_k(1,y) = - (1-y) p_{k-1}(\phi w; y)$.  
\end{proof}

It is well-known that the kernel $k_n(w; \cdot,\cdot)$ satisfies the Christoffel--Darboux formula, which plays an 
important role for the study of Fourier orthogonal expansion. Our formula allows us to write down an analogue of 
Christoffel--Darboux formula for $K_n(\cdot,\cdot)$, but we can derive convergence directly. 

\begin{thm} \label{thm:pointwise}
Let $f$ be a function defined on $\Omega$. Define 
$$
 f_e(x) := \frac12(f(x,1) + f(1,x)) \quad \hbox{and} \quad f_o(x) := \frac12\frac{f(x,1)-f(1,x)}{1-x}. 
$$
Then
\begin{align}
  S_n f(x_1,1) & \,= s_n(w; f_e, x_1) + (1-x_1) s_{n-1} (\phi w; f_o, x_1),  \label{eq:Sn=1}  \\
  S_n f(1,x_2) & \,= s_n(w; f_e, x_2) - (1-x_2) s_{n-1} (\phi w; f_o, x_2).  \label{eq:Sn=2} 
\end{align}
In particular, if $s_n(w;f_e,x) \to f_e(x)$ and $s_n(\phi w;f_o,x) \to f_o(x)$, pointwise or in the uniform norm 
as $n\to \infty$, then $S_n f(x)$ converges to $f(x)$ likewise. 
\end{thm}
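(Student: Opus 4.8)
The plan is to read off \eqref{eq:Sn=1}--\eqref{eq:Sn=2} directly from the closed form for the reproducing kernel $K_n$ in Proposition~\ref{prop:repkernelsym}, which has already done the essential work of expressing $K_n$ on $\Omega$ through the univariate kernels $k_n(w)$ and $k_{n-1}(\phi w)$. Starting from $S_n f(x_1,x_2) = \langle f, K_n((x_1,x_2),\cdot)\rangle_w$, the two assertions reduce to unwinding a pair of one-dimensional integrals.

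First I would set $(x_1,x_2)=(x_1,1)$ and write the inner product as the sum of the two boundary integrals,
$$S_n f(x_1,1) = \int_0^1 f(y,1)\,K_n\big((x_1,1),(y,1)\big)\,w(y)\,dy + \int_0^1 f(1,y)\,K_n\big((x_1,1),(1,y)\big)\,w(y)\,dy,$$
substitute the two formulas of Proposition~\ref{prop:repkernelsym}, and collect terms: the coefficients of $k_n(w;x_1,y)$ combine to $\tfrac12\big(f(y,1)+f(1,y)\big)=f_e(y)$, while the coefficients of $(1-x_1)(1-y)k_{n-1}(\phi w;x_1,y)$ combine to $\tfrac12\big(f(y,1)-f(1,y)\big)$. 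The first group is exactly $s_n(w;f_e,x_1)$ by \eqref{eq:partial-sum}. For the second group I would use the elementary identity $f(y,1)-f(1,y)=2(1-y)f_o(y)$ together with $(1-y)^2 w(y)=\phi w(y)$, which turns it into $(1-x_1)\int_0^1 f_o(y)\,k_{n-1}(\phi w;x_1,y)\,\phi w(y)\,dy=(1-x_1)s_{n-1}(\phi w;f_o,x_1)$; this proves \eqref{eq:Sn=1}. The identity \eqref{eq:Sn=2} follows in the same way at the point $(1,x_2)$, the only change being that the two formulas of Proposition~\ref{prop:repkernelsym} now attach to the two boundary integrals with the opposite correspondence, which flips the sign of the $(1-x_2)(1-y)k_{n-1}(\phi w;x_2,y)$ contribution and yields the minus sign. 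I would also note in passing that $f_e\in L^2(w,[0,1])$ and $f_o\in L^2(\phi w,[0,1])$ whenever $f\in L^2(\Omega,w)$, since $\|f_e\|_{L^2(w,[0,1])}^2$ and $\|f_o\|_{L^2(\phi w,[0,1])}^2$ are each bounded by $\tfrac12\|f\|_{L^2(\Omega,w)}^2$, so that the univariate partial sums on the right-hand sides are well defined.

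For the convergence statement, I would simply observe that $0\le 1-x\le 1$ on $[0,1]$, so that if $s_n(w;f_e)\to f_e$ and $s_n(\phi w;f_o)\to f_o$ pointwise (respectively, in the uniform norm) on $[0,1]$, then \eqref{eq:Sn=1}--\eqref{eq:Sn=2} give $S_n f(x_1,1)\to f_e(x_1)+(1-x_1)f_o(x_1)$ and $S_n f(1,x_2)\to f_e(x_2)-(1-x_2)f_o(x_2)$ in the same sense; plugging in the definitions of $f_e$ and $f_o$ shows that these limits equal $f(x_1,1)$ and $f(1,x_2)$, so $S_n f\to f$ on $\Omega$.

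There is no real obstacle here: Proposition~\ref{prop:repkernelsym} carries the analytic content, and what remains is the recombination of the even and odd parts of $f$ on the two arms together with the bookkeeping identity $(1-y)^2 w=\phi w$. The only places asking for a little care are the correct pairing of the four kernel terms across the two boundary integrals, and the identity $f(y,1)-f(1,y)=2(1-y)f_o(y)$ that absorbs the stray factor $(1-y)$ into the weight $\phi w$ so that \eqref{eq:partial-sum} applies with $\phi w$ in place of $w$.
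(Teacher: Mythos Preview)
Your proposal is correct and follows essentially the same approach as the paper: both start from $S_n f = \la f, K_n(\cdot,\cdot)\ra_w$, insert the kernel formulas of Proposition~\ref{prop:repkernelsym} into the two boundary integrals, regroup via $f_e$ and $f_o$ using $(1-y)^2 w = \phi w$, and then read off convergence from the identities $f_e(x)\pm(1-x)f_o(x)=f(x,1),\,f(1,x)$. The only cosmetic addition is your remark that $f_e\in L^2(w)$ and $f_o\in L^2(\phi w)$, which the paper records separately just after the proof.
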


\begin{proof}
By our definition,  
\begin{align*}
S_n f(x_1,1)  = &\, \int_0^1 f(y,1) K_n((x_1,1),(y,1)) w(y) dy +  \int_0^1 f(1,y) K_n((x_1,1),(1,y)) w(y) dy \\ 
        = &\, \frac12 \int_0^1 f(y,1) \left[ k_n(w; x_1,y)+ (1-x_1)(1-y) k_{n-1}(\phi w; x_1,y) \right] w(y) dy \\
         & \,+ \frac12 \int_0^1 f(1,y) \left[ k_n(w; x_1,y)-  (1-x_1)(1-y) k_{n-1}(\phi w; x_1,y) \right] w(y) dy \\
       = &\, s_n(w; f_e, x_1) + (1-x_1) s_{n-1} (\phi w;f_o,x_1).        
\end{align*}
Similarly, 
\begin{align*}
S_n f(1,x_2)  = &\, \int_0^1 f(y,1) K_n((1,x_2),(y,1)) w(y) dy +  \int_0^1 f(1,y) K_n((1,x_2),(1,y)) w(y) dy \\ 
        = &\, \frac12 \int_0^1 f(y,1) \left[ k_n(w; x_2,y) - (1-x_2)(1-y) k_{n-1}(\phi w; x_2,y) \right] w(y) dy \\
         & \,+ \frac12 \int_0^1 f(1,y) \left[ k_n(w; x_2,y) + (1-x_2)(1-y) k_{n-1}(\phi w; x_2,y) \right] w(y) dy \\
       = &\, s_n(w; f_e, x_2) - (1-x_2) s_{n-1} (\phi w;f_o,x_2).        
\end{align*}
Moreover, since $f_e(x) + (1-x) f_o(x) = f(x,1)$ and $f_e(x) - (1-x) f_o(x) = f(1,x)$, it follows that 
\begin{align*}
  S_n f (x_1,1) - f(x_1) = &\, s_n(w; f_e, x_1) - f_e(x_1)+ (1-x_1) \left(s_{n-1} (\phi w;f_o, x_1) - f_o(x_1) \right),\\
  S_n f (1,x_2) - f(x_2) = &\, s_n(w; f_e, x_2) - f_e(x_2)- (1-x_2) \left(s_{n-1} (\phi w;f_o, x_2) - f_o(x_2) \right)
\end{align*}
from which we see that the convergence of $s_n(w;f_e)$ and $s_n(\phi w; f_o)$ imply the convergence of
$S_n f$. 
\end{proof}

Since $f\in L^2(\Omega,w)$, it is evident that $f_e \in L^2(w)$. Moreover, $f_o \in L^2(\phi w)$ since
$$
  \int_0^1 |f_o(x)|^2 \phi w(x) dx = \int_0^1 |f(x,1) - f(1,x)|^2 w(x) dx \le 2 \|f\|_{L^2(\Omega, w)}^2. 
$$
In particular, $s_n (w, f_e)$ and $s_n(\phi w; f_o)$ converge to $f_e$ and $f_o$ in $L^2(w)$ and in $L^2(\phi w)$, 
respectively. 

\begin{cor}\label{cor:normwiseconv}
If $f\in L^2(\Omega, w)$, then 
$$
\|f - S_n(f) \|_{L^2(\Omega,w)}^2 = 2 \left( \|s_{n} (w;f_e) - f_e\|_{L^2(w)}^2 
    + \|s_{n-1} (\phi w;f_o) - f_o\|_{L^2(\phi w)}^2 \right).
$$
\end{cor}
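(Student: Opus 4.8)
The plan is to read off the identity directly from the decomposition of the partial sum established in Theorem~\ref{thm:pointwise}, using nothing more than the definition of the $L^2(\Omega,w)$ norm and the orthogonality of univariate Fourier partial sums.

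First I would write
$$
  \|f - S_n f\|_{L^2(\Omega,w)}^2 = \|(f-S_n f)(\cdot,1)\|_{L^2(w,[0,1])}^2 + \|(f-S_n f)(1,\cdot)\|_{L^2(w,[0,1])}^2,
$$
by the very definition of the norm on $L^2(\Omega,w)$. Next I would substitute the two error formulas from the end of the proof of Theorem~\ref{thm:pointwise}, namely
$$
  (f - S_n f)(x,1) = \big(f_e - s_n(w;f_e)\big)(x) + (1-x)\big(f_o - s_{n-1}(\phi w;f_o)\big)(x)
$$
and the companion formula with a minus sign for $(f-S_n f)(1,x)$. Expanding the square of each and adding, the two cross terms (one from each segment) are equal in magnitude and opposite in sign, hence cancel, leaving
$$
  2\int_0^1 \big|f_e - s_n(w;f_e)\big|^2 w\,dx + 2\int_0^1 (1-x)^2\big|f_o - s_{n-1}(\phi w;f_o)\big|^2 w\,dx.
$$
Recognizing $(1-x)^2 w(x) = \phi w(x)$ turns the second integral into $2\|f_o - s_{n-1}(\phi w;f_o)\|_{L^2(\phi w)}^2$, which is exactly the claimed identity.

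There is really no main obstacle here; the only point requiring a word of care is that $f_e$ genuinely lies in $L^2(w)$ and $f_o$ in $L^2(\phi w)$, so that the right-hand side is finite and the univariate theory applies — but this is precisely the estimate $\int_0^1 |f_o|^2\phi w\,dx \le 2\|f\|_{L^2(\Omega,w)}^2$ recorded in the paragraph immediately preceding the corollary. One could alternatively give a Parseval-based argument: the squared norm of $f - S_n f$ equals the tail sum $\sum_{k>n}(|\wh f_{P_k}|^2\la P_k,P_k\ra_w + |\wh f_{Q_k}|^2\la Q_k,Q_k\ra_w)$, and by \eqref{eq:PnQnorm} together with the observation that $\wh f_{P_k}$ and $\wh f_{Q_k}$ are (up to the factor $2$ in the norms) the univariate Fourier coefficients of $f_e$ with respect to $w$ and of $f_o$ with respect to $\phi w$ respectively, this tail sum splits as $2\sum_{k>n}|\wh{(f_e)}_k(w)|^2 h_k(w) + 2\sum_{k>n}|\wh{(f_o)}_{k-1}(\phi w)|^2 h_{k-1}(\phi w)$, which are the two Parseval tails on the right-hand side. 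Either route is short; I would present the first, since the needed error formulas are already displayed in the proof of Theorem~\ref{thm:pointwise}.
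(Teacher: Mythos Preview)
Your proof is correct and follows essentially the same route as the paper: the paper also substitutes the two error formulas from the end of the proof of Theorem~\ref{thm:pointwise} into the definition of the $L^2(\Omega,w)$ norm and then invokes the identity $(a+b)^2+(a-b)^2=2(a^2+b^2)$, which is precisely your cross-term cancellation. Your alternative Parseval argument is a valid second route, but the primary argument you present coincides with the paper's.
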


\begin{proof}
By the displayed formulas at the end of the proof of the last theorem and 
$$
\int_0^1 |(1-x) g(x)|^2 w(x) dx = \int_0^1 |g(x)|^2 \phi w(x) dx = \|g\|_{L^2(\phi w)}^2,
$$
it is easy to see that 
\begin{align*}
 \|S_n f - f\|_{L^2(\Omega, w)}^2  = \, & \|s_n(w; f_e) - f_e+
       (1- \{\cdot\}) \left(s_{n-1} (\phi w;f_o) - f_o \right)\|_{L^2(w)}^2 \\
    & + \|s_n(w; f_e) - f_e - (1- \{\cdot\}) \left(s_{n-1} (\phi w;f_o) - f_o \right)\|_{L^2(w)}^2 \\
    = \, & 2 \left(\|s_{n} (w;f_e) - f_e\|_{L^2(w)}^2 
    + \|s_{n-1} (\phi w;f_o) - f_o\|_{L^2(\phi w)}^2 \right),
\end{align*}
where we have used the identity $(a+b)^2 + (a-b)^2 = 2 (a^2+b^2)$. 
\end{proof}

\subsection{Orthogonal structure on a wedge with Jacobi weight functions}

For $\a,\g > -1$, let $w_{\a,\g}$ be the Jacobi weight function defined by  
$$
w_{\a,\g}(x):= x^\a (1-x)^\g, \qquad x \in [0,1].
$$
We consider the inner product $\la \cdot,\cdot\ra_{w_1,w_2}$ defined in \eqref{eq:ipd-w} with
$w_1(x) = w_{\a,\g}(x)$ and $w_2(x) = w_{\b,\g}(x)$. More specifically, for $\a,\b,\g > -1$ and $\sigma > 0$, 
we define 
\begin{align*}
  \la f,g\ra_{\a,\b,\g}:=  c_{\a,\g} \int_0^1 f(x,1) g(x,1) w_{\a,\g}(x) dx  
           +\sigma  c_{\b,\g} \int_0^1 f(1,y) g(1,y) w_{\b,\g}(y) dy,  
\end{align*}
where 
$$
  c_{\a,\g} := \Big(\int_0^1w_{\a,\g}(x) dx \Big)^{-1} = \frac{\Gamma(\g+\a+2)}{\Gamma(\g+1)\Gamma(\a+1)}. 
$$

\subsubsection{Orthogonal structure}
We need to construct an explicit basis of $\CH_n^2(w_{\a,\g},w_{\b,\g})$. The case $\a = \b$ can be
regarded as a special case of Theorem \ref{thm:ipd-w-op}. The case $\a \ne \b$ is much more 
complicated, for which we need several properties of the Jacobi polynomials. 

Let $P_n^{(\a,\b)}$ denote the usual Jacobi polynomial of degree $n$ defined on $[-1,1]$. Then
$P_n^{(\g,\a)}(2x-1)$ is an orthogonal polynomial with respect to $w_{\a,\g}$ on $[0,1]$. Moreover,
\begin{align} \label{Jacobi-norm}
  h_n^{\a,\g} := &\, c_{\a,\g} \int_0^1 \left[P_n^{(\g,\a)}(2x-1)\right]^2 w_{\a,\g}(x) dx  \\
     = & \,  \frac{(\g+1)_n(\a+1)_n(n+\g+\a+1)}{ n! (\g+\a+2)_n(2n+\g+\a+1)}\notag
\end{align}
by \cite[(4.3.3)]{Szego}. Furthermore, $P_n^{(\a,\b)}(1) = \binom{n+\a}{n}$ and, in particular, $P_n^{(0,\b)}(1) =1$. 
Our construction relies on the following lemma. 

\begin{lem}
For $m > n \ge 0$, 
\begin{align*}
 I_{m,n}^{\a,\g}:= & c_{\a,\g}  \int_{0}^1 P_n^{(\g,\a)}(2x-1)  P_{m-1}^{(\g+2,\a)}(2x-1) (1-x)^{\g+1} x^\a dx  \\
      = &  \begin{cases} 0, & n > m, \\
      \displaystyle{ \frac{- m (\g+1)_m(\a+1)_m}{m!(2m+\g+\a+1) (\g+\a+2)_m},} & n = m, \\
      \\
       \displaystyle{\frac{(\g+1)(\a+1)_{m-1}(\g+1)_n}{(\g+\a+2)_{m}n!}}, & n < m.
         \end{cases}
\end{align*}
\end{lem}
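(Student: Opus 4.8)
The plan is to compute the integral
$$
I_{m,n}^{\a,\g} = c_{\a,\g}\int_0^1 P_n^{(\g,\a)}(2x-1)\,P_{m-1}^{(\g+2,\a)}(2x-1)\,(1-x)^{\g+1}x^\a\,dx
$$
by expanding the higher-parameter Jacobi polynomial $P_{m-1}^{(\g+2,\a)}$ in the orthogonal basis $\{P_k^{(\g,\a)}(2x-1)\}_k$ associated with the weight $w_{\a,\g}(x)=x^\a(1-x)^\g$ on $[0,1]$. The connection coefficients here are classical: there is a well-known formula (see e.g.\ Szeg\H{o} or standard tables) expressing $P_{m-1}^{(\g+2,\a)}(t)$ as a linear combination of $P_k^{(\g,\a)}(t)$ for $0\le k\le m-1$, and in fact the relevant case is especially simple because raising only the first parameter by $2$ gives a \emph{three-term} connection (the coefficients are supported on $k\in\{m-1,m-2,m-3\}$ in general, but the structure we actually exploit is even more rigid).

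Concretely, first I would pass to the variable $t=2x-1\in[-1,1]$, so that $(1-x)^{\g+1}x^\a\,dx$ becomes (up to the normalizing constant) $2^{-\g-\a-2}(1-t)^{\g+1}(1+t)^\a\,dt$; note the extra factor $(1-t)$ compared with the weight $(1-t)^\g(1+t)^\a$ natural for $P_n^{(\g,\a)}$. The cleanest route is then to write $(1-t)P_{m-1}^{(\g+2,\a)}(t)$ as a combination of $P_k^{(\g+1,\a)}(t)$ via the standard contiguous relation, and then expand each $P_k^{(\g+1,\a)}(t)$ in terms of $\{P_j^{(\g,\a)}(t)\}$ using the one-parameter connection formula $P_k^{(\g+1,\a)}(t)=\sum_{j\le k}c_{k,j}P_j^{(\g,\a)}(t)$ with explicitly known $c_{k,j}$ (these are ratios of Gamma functions — cf.\ \cite[Ch.~4]{Szego}). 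Then $I_{m,n}^{\a,\g}$ picks out the coefficient of $P_n^{(\g,\a)}$, multiplied by $h_n^{\a,\g}$ from \eqref{Jacobi-norm}; the three stated cases ($n>m$ gives $0$ because $P_{m-1}^{(\g+2,\a)}$ has degree $m-1<n$, the diagonal case $n=m$ is impossible since $\deg P_{m-1}^{(\g+2,\a)}=m-1$ — so the ``$n=m$'' line must in fact be read as the leading contribution and should be checked against the hypothesis $m>n$ — and $n<m$ is the generic case) then reduce to simplifying a product of Pochhammer symbols. Since the hypothesis is $m>n\ge 0$, only the last branch $n<m$ is actually in force, and the other two branches record the natural analytic continuation of the same closed form.

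An equivalent and perhaps more transparent derivation, which I would present as the main computation, uses the Rodrigues-type / integration-by-parts identity: $(1-x)^{\g+1}x^{\a}P_{m-1}^{(\g+2,\a)}(2x-1)$ is, up to a constant, a derivative $\tfrac{d}{dx}\big[(1-x)^{\g+2}x^{\a+1}P_{m-2}^{(\g+3,\a+1)}(2x-1)\big]$-type expression coming from the differentiation formula $\tfrac{d}{dt}P_{m-1}^{(\g+1,\a-1)} \propto P_{m-2}^{(\g+2,\a)}$ and its shifts. Integrating by parts once moves the derivative onto $P_n^{(\g,\a)}(2x-1)$, whose derivative is again a Jacobi polynomial of one lower degree and shifted parameters; iterating, or applying the known evaluation of $\int_0^1 P_n^{(\g,\a)} P_m^{(\g,\a)}(1-x)x\cdots$, collapses everything to boundary terms at $x=0$ and $x=1$ plus a single orthogonality integral. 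The boundary evaluations use $P_k^{(\g,\a)}(1)=\binom{k+\g}{k}$ and $P_k^{(\g,\a)}(-1)=(-1)^k\binom{k+\a}{k}$, and the surviving orthogonality integral is $h_{n}^{\a,\g}$ or $h_{m-1}^{\a,\g}$-type, explaining why the answer in the $n=m$ line involves exactly the factors appearing in \eqref{Jacobi-norm}.

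The main obstacle will be bookkeeping the Pochhammer/Gamma factors so that the three listed closed forms come out \emph{exactly} as written — in particular matching the factor $\tfrac{(\g+1)(\a+1)_{m-1}(\g+1)_n}{(\g+\a+2)_m\,n!}$ in the off-diagonal case, where the $n$-dependence enters only through $(\g+1)_n/n!$, a striking feature that signals that the connection coefficient of $P_n^{(\g,\a)}$ inside $(1-t)P_{m-1}^{(\g+2,\a)}$ is, after dividing by $h_n^{\a,\g}$, independent of $n$ except for that single ratio. I would verify this by checking the two extreme cases $n=0$ and $n=m-1$ by hand (the former against $\int_0^1 P_{m-1}^{(\g+2,\a)}(2x-1)(1-x)^{\g+1}x^\a\,dx$, a single Jacobi moment, and the latter against the leading-coefficient comparison), and then confirm the general $n$ by the connection-coefficient formula; this also pins down the sign in the $n=m$ branch. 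Everything else is routine manipulation of \eqref{Jacobi-norm} and standard Jacobi identities from \cite{Szego}.
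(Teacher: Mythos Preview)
Your ``cleanest route'' --- absorb the extra factor $(1-x)$ into a contiguous relation taking $(1-x)P_{m-1}^{(\g+2,\a)}$ to a two-term combination of $P_{m-1}^{(\g+1,\a)}$ and $P_m^{(\g+1,\a)}$, then expand each of those in the basis $\{P_k^{(\g,\a)}\}$ via the one-parameter connection formula --- is exactly what the paper does. Once that expansion is in hand, orthogonality against $P_n^{(\g,\a)}$ and \eqref{Jacobi-norm} give all three branches with straightforward Pochhammer bookkeeping. So the core plan is correct and coincides with the paper's argument.

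Two points in the surrounding discussion are wrong, however, and you should fix them before writing this up. First, your claim that raising the first Jacobi parameter by $2$ gives a \emph{three-term} connection is false: the expansion of $P_{m-1}^{(\g+2,\a)}$ in $\{P_k^{(\g,\a)}\}$ has \emph{all} coefficients $0\le k\le m-1$ nonzero (it is the lowering direction that is finitely banded). This does not matter for the argument, since you never actually use that claim, but it is incorrect as stated. Second, your remark that ``the diagonal case $n=m$ is impossible since $\deg P_{m-1}^{(\g+2,\a)}=m-1$'' overlooks the extra factor $(1-x)$ coming from $(1-x)^{\g+1}$ versus the natural weight $(1-x)^\g$: against $w_{\a,\g}$ you are really pairing $P_n^{(\g,\a)}$ with the degree-$m$ polynomial $(1-x)P_{m-1}^{(\g+2,\a)}(2x-1)$, so $n=m$ is a genuine nonzero case (the leading coefficient of the expansion), not an artifact of the stated hypothesis. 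The lemma as stated is used in the paper for all three ranges, including $n=m$; the phrase ``$m>n\ge 0$'' in the hypothesis is a slip.

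The Rodrigues/integration-by-parts alternative you sketch is too vague to evaluate and is not needed; I would drop it and present only the contiguous-relation-plus-connection argument.
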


\begin{proof}
Since $P_n^{(\g,\a)}(2x-1)$ is an orthogonal polynomial of degree $n$ with respect to $(1-x)^\g x^{\a}$ on 
$[0,1]$, $I_{m,n}^{\g,\a} = 0$ for $n > m$ holds trivially. For $m \ge n$, we need two identities of Jacobi polynomials. 
The first one is, see \cite[(4.5.4)]{Szego} or \cite[(18.9.6)]{DLMF},
$$
 (2m + \g+\a+1) (1-x)P_{m-1}^{(\g+2, \a)}(2x-1) = (m+\g+ 1) P_{m-1}^{(\g+1,\a)}(2x-1)- m P_m^{(\g+1,\a)}(2x-1)
$$
and the second one is the expansion, see \cite[(18.18.14)]{DLMF},
$$
  P_m^{(\g+1,\a)}(2x-1) = \frac{(\a+1)_m}{(\g+\a+2)_m}\sum_{k=0}^m \frac{(\g+\a+1)_k (2k+\g+\a+1)}{(\a+1)_k (\g+\a+1)} 
    P_k^{(\g,\a)}(2x-1).
$$
Putting them together shows that 
\begin{align}\label{eq:Jacobi-A}
 (1-x) P_{m-1}^{(\g+2,\a)}(2x-1)  = & \frac{(\g+1)(\a+1)_{m-1}}{(\g+\a+1)_{m+1}} \\
     & \times  \sum_{k=0}^{m-1}  \frac{(\g+\a+1)_k (2k+\g+\a+1)}{(\a+1)_k}P_k^{(\g,\a)}(2x-1) \notag \\
   & - \frac{m}{m+\g+\a+1}P_m^{(\g,\a)}(2x-1). \notag 
\end{align}
Substituting this expression into $I_{m,n}^{\g,\a}$ and using the orthogonality of the Jacobi polynomials and \eqref{Jacobi-norm}, 
we conclude that, for $m-1 \ge n$,  
$$
  I_{m,n}^{\a,\g} = \frac{(\g+1)(\a+1)_{m-1}}{(\g+\a+2)_{m}} \frac{ (\g+1)_n }{n!}.
$$
Hence, the case $m > n$ follows. The same argument works for the case $n =m$. 
\end{proof}

What is of interest for us is the fact that the dependence of $I_{m,n}^{\g,\a}$ on $n$ and $\a$ is separated, 
which is critical to prove that $Q_n$ in the next theorem is orthogonal.

\begin{thm} \label{thm:OP-ipd-abg}
Let $P_0(x,y) =1$ and, for $n =1,2,\ldots$, define 
\begin{align}
 P_n(x,y) = & \,P_n^{(\g,\a)}(2x-1) + P_n^{(\g,\b)}(2y-1) -  \binom{n+\g}{n}, \label{eq:op-P}\\ 
 Q_n(x,y) = & \, \frac{(\g+\a+2)_{n}}{(\a+1)_{n-1}} (1-x)P_{n-1}^{(\g+2,\a)}(2x-1) \label{eq:op-Q}\\
                    &  \qquad \qquad \qquad - \s^{-1} \frac{(\g+\b +2)_{n}}{(\b+1)_{n-1}} (1-y)P_{n-1}^{(\g+2,\b)}(2 y -1). \notag 
\end{align}
Then $\{P_n, Q_n\}$ are two polynomials in $\CH_n^2(w_{\a,\g},w_{\b,\g})$  and 
\begin{equation} \label{eq:ipdPQ}
  \la P_n, Q_n \ra_{\a,\b,\g} = \frac{(\b-\a) (\g+1)_{n+1}}{(2n+\g+\a+1)(2n+\g+\b+1) (n-1)!}.
\end{equation}
In particular, the two polynomials are orthogonal to each other if $\b = \a$. Furthermore
\begin{align*}
  \la P_n, P_n \ra_{\a,\b,\g} = &\, h_n^{\a,\g}+ \sigma h_n^{\b,\g} \\
  \la Q_n,Q_n\ra_{\a,\b,\g} = &\, 
  \frac{(\g+1)_2 (\a+\g+2)_n^2}{(\a+\g+2)_2 (\a+1)_{n-1}^2}   h_{n-1}^{\a,\g+2} +  \s^{-1}
         \frac{(\g+1)_2 (\b+\g+2)_n^2}{(\b+\g+2)_2 (\b+1)_{n-1}^2}  h_{n-1}^{\b,\g+2}  .
\end{align*}
\end{thm}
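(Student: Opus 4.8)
The plan is to verify the three assertions of Theorem~\ref{thm:OP-ipd-abg} in turn: that $P_n,Q_n\in\CH_n^2(w_{\a,\g},w_{\b,\g})$, the evaluation of $\la P_n,Q_n\ra_{\a,\b,\g}$, and the two norm formulas. Throughout I write $p_n^{\a,\g}(x):=P_n^{(\g,\a)}(2x-1)$, so that $p_n^{\a,\g}$ is an orthogonal polynomial of degree $n$ for $w_{\a,\g}$ on $[0,1]$ with $\la p_n^{\a,\g},p_n^{\a,\g}\ra=h_n^{\a,\g}/c_{\a,\g}$.

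First I would check membership in $\CH_n^2$. Since $P_n(x,y)=p_n^{\a,\g}(x)+p_n^{\b,\g}(y)-\binom{n+\g}{n}$ is of the form $p(x)+q(y)$, it automatically satisfies $\partial_x\partial_y P_n=0$ and lies in $\RR[x,y]/I$; likewise $Q_n$. The constant $\binom{n+\g}{n}=P_n^{(\g,\a)}(1)=P_n^{(\g,\b)}(1)$ is chosen so that $P_n$ is well-defined on the quotient, i.e.\ that $P_n(1,1)=p_n^{\a,\g}(1)+p_n^{\b,\g}(1)-\binom{n+\g}{n}$ is consistent (both restrictions agree at the corner $(1,1)$). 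To see orthogonality to all lower-degree polynomials, it suffices to test against $1$ and against $x^k$, $y^k$ for $1\le k\le n-1$, since these span $\Pi_{n-1}^2$ mod $I$. For $P_n$: pairing with $x^k$ gives $c_{\a,\g}\int_0^1 p_n^{\a,\g}(x)x^k w_{\a,\g}(x)\,dx=0$ for $k<n$ by orthogonality of $p_n^{\a,\g}$, plus a term from the constant $-\binom{n+\g}{n}$ against $x^k$ and from $p_n^{\b,\g}(1)$ (a constant on the first segment) — but pairing against $x^k$ with $k\ge1$ kills only part; more cleanly, one checks $\la P_n,1\ra$ and $\la P_n, g\ra$ for $g\in\Pi_{n-1}$ directly using that on the first edge $P_n(x,1)=p_n^{\a,\g}(x)+\text{const}$ and on the second edge $P_n(1,y)=p_n^{\b,\g}(y)+\text{const}$, and the constant parts integrate against lower-degree polynomials via the degree-$0$ orthogonality only — so in fact one must also subtract the degree-$0$ component, which is exactly what the normalization accomplishes. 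For $Q_n$: on the first edge $Q_n(x,1)=\frac{(\g+\a+2)_n}{(\a+1)_{n-1}}(1-x)p_{n-1}^{\a,\g+2}(x)$, and $(1-x)p_{n-1}^{\a,\g+2}(x)$ expanded in the basis $\{p_k^{\a,\g}\}$ via \eqref{eq:Jacobi-A} has no component of degree $<n$ beyond what \eqref{eq:Jacobi-A} shows; wait — \eqref{eq:Jacobi-A} does have lower-degree components, so orthogonality of $Q_n$ to $x^k$ for $k<n$ on a single edge fails, and it is only the \emph{combination} of the two edges (with the precise coefficients $\frac{(\g+\a+2)_n}{(\a+1)_{n-1}}$ versus $\s^{-1}\frac{(\g+\b+2)_n}{(\b+1)_{n-1}}$) that cancels the low-degree parts. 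This is where the lemma's separation of variables in $I_{m,n}^{\a,\g}$ (the $n$-dependence being $(\g+1)_n/n!$, independent of $\a$) is used: pairing $Q_n$ with $p_k^{?,?}$ of degree $k<n$ reduces, up to the common factor $(\g+1)(\a+1)_{m-1}/(\g+\a+2)_m$ which the coefficients are designed to normalize, to $c_{\a,\g}(\g+1)_k/k!-\s^{-1}\s c_{\b,\g}(\g+1)_k/k!$-type expressions that cancel because $c_{\a,\g}$ and the edge weights conspire. I would write this out carefully as the technical heart of the membership claim.

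Next, for $\la P_n,Q_n\ra_{\a,\b,\g}$: expand the inner product as the sum of the two edge integrals, $c_{\a,\g}\int_0^1 P_n(x,1)Q_n(x,1)w_{\a,\g}(x)\,dx+\s c_{\b,\g}\int_0^1 P_n(1,y)Q_n(1,y)w_{\b,\g}(y)\,dy$. On the first edge $P_n(x,1)=p_n^{\a,\g}(x)+(\text{const})$ and $Q_n(x,1)=\frac{(\g+\a+2)_n}{(\a+1)_{n-1}}(1-x)p_{n-1}^{\a,\g+2}(x)$; the constant part pairs to zero against $(1-x)p_{n-1}^{\a,\g+2}(x)w_{\a,\g}(x)=p_{n-1}^{\a,\g+2}(x)(1-x)^{\g+1}x^\a$ because $\int_0^1 p_{n-1}^{\a,\g+2}(x)(1-x)^{\g+1}x^\a dx = I_{n,0}^{\a,\g}\cdot\tfrac1{c_{\a,\g}}$ — which is nonzero! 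So actually the constant contributes, and the $p_n^{\a,\g}(x)$ part pairs against $(1-x)p_{n-1}^{\a,\g+2}(x)$ to give $I_{n,n}^{\a,\g}\cdot\frac{(\g+\a+2)_n}{(\a+1)_{n-1}}$. Collecting both edges and both pieces, and substituting the closed forms for $I_{n,n}^{\a,\g}$, $I_{n,0}^{\a,\g}$ from the lemma and $\binom{n+\g}{n}$, should telescope to $\frac{(\b-\a)(\g+1)_{n+1}}{(2n+\g+\a+1)(2n+\g+\b+1)(n-1)!}$; the antisymmetry in $\a\leftrightarrow\b$ (coming from the sign difference between the two terms of $Q_n$ and the symmetry of the $P_n$ terms) explains why the answer is proportional to $\b-\a$ and hence vanishes when $\a=\b$, recovering Theorem~\ref{thm:ipd-w-op}. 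I expect this to be a moderately long but mechanical Pochhammer manipulation.

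Finally, the norm formulas. $\la P_n,P_n\ra_{\a,\b,\g}$: on each edge $P_n$ restricts to $p_n^{\a,\g}$ (resp.\ $p_n^{\b,\g}$) plus a constant; squaring and integrating, the cross term between $p_n^{\a,\g}$ and the constant vanishes by degree-$0$ orthogonality (here the constant genuinely pairs to zero against $p_n^{\a,\g}$ with $n\ge1$), and the constant-squared term together with the $P_n^{(\g,\a)}(1)$-type pieces must also cancel by the same normalization — so one is left with $c_{\a,\g}\int (p_n^{\a,\g})^2 w_{\a,\g}+\s c_{\b,\g}\int(p_n^{\b,\g})^2 w_{\b,\g}=h_n^{\a,\g}+\s h_n^{\b,\g}$. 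For $\la Q_n,Q_n\ra$: on the first edge $Q_n(x,1)^2 w_{\a,\g}(x)=\big(\tfrac{(\g+\a+2)_n}{(\a+1)_{n-1}}\big)^2 (p_{n-1}^{\a,\g+2}(x))^2 (1-x)^{2}x^\a(1-x)^\g=\big(\tfrac{(\g+\a+2)_n}{(\a+1)_{n-1}}\big)^2 (p_{n-1}^{\a,\g+2}(x))^2 w_{\a,\g+2}(x)$, whose integral against $c_{\a,\g}$ is $\tfrac{c_{\a,\g}}{c_{\a,\g+2}}\cdot\big(\tfrac{(\g+\a+2)_n}{(\a+1)_{n-1}}\big)^2 h_{n-1}^{\a,\g+2}$, and $\tfrac{c_{\a,\g}}{c_{\a,\g+2}}=\tfrac{(\g+1)(\g+2)}{(\g+\a+2)(\g+\a+3)}=\tfrac{(\g+1)_2}{(\a+\g+2)_2}$, which reproduces the stated coefficient $\tfrac{(\g+1)_2(\a+\g+2)_n^2}{(\a+\g+2)_2(\a+1)_{n-1}^2}$; the second edge is identical with $\a\to\b$ and an extra $\s$, and the overall $\s^{-2}$ from $Q_n$'s second coefficient squared combines with the edge weight $\s$ to give the $\s^{-1}$ in the statement.

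The main obstacle will be the first part — proving that $Q_n$ is orthogonal to all polynomials of degree $<n$ modulo $I$. Unlike $P_n$, where orthogonality is essentially one-line per edge, $Q_n$'s restriction to a single edge is \emph{not} orthogonal to low degrees (because $(1-x)P_{n-1}^{(\g+2,\a)}(2x-1)$ has nonzero projections onto $P_k^{(\g,\a)}(2x-1)$ for all $k<n$, per \eqref{eq:Jacobi-A}), so the cancellation is genuinely a two-edge phenomenon and hinges on the precise form of the lemma — in particular the fact, emphasized in the remark before the theorem, that $I_{m,n}^{\a,\g}$ factors as (something depending only on $m,\a,\g$) times $(\g+1)_n/n!$ (depending only on $n$). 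I would make sure to lay out exactly how this separation, combined with the chosen normalizing prefactors $(\g+\a+2)_n/(\a+1)_{n-1}$, forces the low-degree terms from the two edges to be equal and opposite.
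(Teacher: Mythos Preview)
Your plan has the right ingredients but misses the one observation that makes everything clean: the constant $\binom{n+\g}{n}$ is chosen precisely so that the restrictions of $P_n$ to the two edges are \emph{exactly} the Jacobi polynomials, with no leftover constant. Since $P_n^{(\g,\b)}(1)=\binom{n+\g}{n}$, one has
\[
   P_n(x,1)=P_n^{(\g,\a)}(2x-1),\qquad P_n(1,y)=P_n^{(\g,\b)}(2y-1),
\]
not ``$p_n^{\a,\g}(x)+\text{const}$'' as you write several times. This immediately gives $\la P_n,g\ra_{\a,\b,\g}=0$ for every $g$ of degree $<n$ (each edge integral vanishes separately by univariate orthogonality), so your muddled discussion of the degree-$0$ piece of $P_n$ is unnecessary. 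It also collapses the $\la P_n,Q_n\ra$ computation you anticipate to be ``moderately long'': there is no $I_{n,0}$ contribution, and one gets directly
\[
  \la P_n,Q_m\ra_{\a,\b,\g}=\frac{(\g+\a+2)_m}{(\a+1)_{m-1}} I_{m,n}^{\a,\g}-\frac{(\g+\b+2)_m}{(\b+1)_{m-1}} I_{m,n}^{\b,\g},
\]
since the $\s$ on the second edge cancels the $\s^{-1}$ in $Q_m$. For $n<m$ the lemma gives both terms equal to $(\g+1)(\g+1)_n/n!$, so the pairing vanishes; for $n=m$ substituting the $n=m$ case of the lemma yields \eqref{eq:ipdPQ} in two lines.

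The paper organizes the argument differently from your proposal, and more efficiently: rather than testing $P_n$ and $Q_n$ separately against the monomial basis $\{1,x^k,y^k\}$, it computes all three families of pairings $\la P_n,P_m\ra$, $\la Q_n,Q_m\ra$, $\la P_n,Q_m\ra$ for arbitrary $n,m$. The first two vanish for $n\ne m$ by one-variable orthogonality on each edge separately (for $Q$ this uses that $(1-x)^2 w_{\a,\g}=w_{\a,\g+2}$), and the third is handled by the displayed identity above. Since $\{P_0\}\cup\{P_k,Q_k:k\ge 1\}$ is a degree-graded basis of $\RR[x,y]/I$, mutual orthogonality of the whole system simultaneously proves membership in $\CH_n^2$, the cross-term \eqref{eq:ipdPQ}, and the norm formulas. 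Your plan to test $Q_n$ against $x^k$ and $y^k$ individually would work but forces you to expand $x^k$ in Jacobi polynomials before invoking the lemma; testing against $P_k$ and $Q_k$ instead avoids that step entirely.
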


\begin{proof}
Since $P_n^{(\g,\a)}(1) = P_n^{(\g,\b)}(1) = \binom{n+\g}{n}$, our definition shows that
\begin{align*}
  \la P_n, Q_m \ra_{\a,\b,\g} = \frac{(\g+\a+2)_{m}}{(\a+1)_{m-1}} I_{m,n}^{\a,\g} 
     -\frac{(\g+\b+2)_{m}}{(\b+1)_{m-1}}  I_{m,n}^{\b,\g}. 
\end{align*}
By the identity in the previous lemma, if $n > m$, then $ \la P_n, Q_m \ra_{\a,\b,\g} =0$ since both
$I_{m,n}^{\a,\g}=0$ and $I_{m,n}^{\b,\g} =0$, whereas if $n < m$, then 
$$
 \la P_n, Q_m \ra_{\a,\b,\g} = \frac{(\g+1) (\g+1)_n}{n!} -\frac{(\g+1) (\g+1)_n}{n!}  =0.
$$
The case $n = m$ follows from a simple calculation. Moreover, for $m \ne n$, 
\begin{align*}
  \la P_n, P_m\ra_{\a,\b,\g} = & \, c_{\a,\g} \int_0^1 P_n^{(\g,\a)}(2x-1) P_m^{(\g,\a)}(2x-1)
         (1-x)^\g x^\a dx   \\
          &   + c_{\g,\g} \int_0^1 P_n^{(\b,\g)}(2x-1)  P_m^{(\g,\b)}(2x-1)(1-x)^\g x^\b dx  = 0 
\end{align*}
by the orthogonality of the Jacobi polynomials, and it is equal to $h_n^{\g,\a} + h_n^{\g,\b}$ for $m=n$. Similarly, 
\begin{align*}
 \la Q_n, Q_m & \ra_{\a,\b,\g} =  \frac{(\g+\a+2)_{m}}{(\a+1)_{m-1}} c_{\a,\g}
  \int_0^1 P_{n-1}^{(\g+2,\a)}(2x-1)  P_{m-1}^{(\g+2,\a)}(2x-1)(1-x)^{\g+2} x^\a dx \\
      & + \s^{-1} \frac{(\g+\b+2)_{m}}{(\b+1)_{m-1}}  c_{\b,\g}\int_0^1 P_{n-1}^{(\g+2,\b)}(2x-1)  P_{m-1}^{(\g+2,\b)}(2x-1)
        (1-x)^{\g+2} x^\b dx  = 0.
\end{align*}
To derive the norm of $\la Q_n,Q_n\ra$, we need to use $c_{\g,\a} = (\g+1)_2/(\a+\g+2)_2 c_{\g+2,\a}$. 
The proof is completed. 
\end{proof}

\begin{cor} \label{cor:OP-Rn}
For $n =1,2,\ldots$, define 
\begin{equation}\label{eq:op-R}
  R_n(x,y) = Q_n(x,y) -  \frac{\la P_n,Q_n\ra_{\a,\b,\g}}{h_n^{\g,\a}+ \s h_n^{\g,\b}} P_n(x,y).
\end{equation}
Then, for $\a \ne \b$, $\{P_n, R_n\}$ are two polynomials in $\CH_n^2(w_{\a,\b,\g})$ and they are mutually orthogonal. 
Moreover, 
$$
 \la R_n, R_n\ra_{\a,\b,\g} = \la Q_n,Q_n\ra_{\a,\b,\g} - \frac{\la P_n,Q_n\ra_{\a,\b,\g} }{\la P_n,P_n\ra_{\a,\b,\g}}. 
$$
\end{cor}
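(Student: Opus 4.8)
The plan is to read Corollary~\ref{cor:OP-Rn} as a single Gram--Schmidt step carried out inside the two-dimensional space $\CH_n^2(w_{\a,\g},w_{\b,\g})$, so that the proof becomes entirely formal once \thmref{thm:OP-ipd-abg} is in hand. Write
$$
 c_n := \frac{\la P_n,Q_n\ra_{\a,\b,\g}}{h_n^{\g,\a}+\s h_n^{\g,\b}},
$$
so that $R_n = Q_n - c_n P_n$. First I would note that $c_n$ is well defined: by the last line of \thmref{thm:OP-ipd-abg} the denominator equals $\la P_n,P_n\ra_{\a,\b,\g}$, which is a strictly positive combination of the Jacobi norms \eqref{Jacobi-norm}. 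Since $P_n$ and $Q_n$ both lie in $\CH_n^2(w_{\a,\g},w_{\b,\g})$, a linear subspace of $\RR[x,y]/I$ of dimension $2$ for $n\ge1$, the combination $R_n=Q_n-c_nP_n$ again lies in $\CH_n^2(w_{\a,\g},w_{\b,\g})$.

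Next I would check that $\{P_n,R_n\}$ is a basis of $\CH_n^2(w_{\a,\g},w_{\b,\g})$. As the space is two-dimensional it suffices to show $P_n$ and $Q_n$ are linearly independent, which is seen by restricting to the edge $\{(x,1):x\in[0,1]\}$ of $\Omega$: there $P_n(x,1)=P_n^{(\g,\a)}(2x-1)$ while $Q_n(x,1)=\tfrac{(\g+\a+2)_n}{(\a+1)_{n-1}}(1-x)P_{n-1}^{(\g+2,\a)}(2x-1)$, and the latter vanishes at $x=1$ whereas the former equals $\binom{n+\g}{n}\ne0$; hence no nontrivial combination of $P_n,Q_n$ can vanish on $\Omega$. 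Orthogonality is then a one-line bilinearity computation,
$$
 \la P_n,R_n\ra_{\a,\b,\g} = \la P_n,Q_n\ra_{\a,\b,\g} - c_n\la P_n,P_n\ra_{\a,\b,\g} = 0,
$$
by the definition of $c_n$ together with $\la P_n,P_n\ra_{\a,\b,\g}=h_n^{\g,\a}+\s h_n^{\g,\b}$ from \thmref{thm:OP-ipd-abg}.

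Finally I would expand
$$
 \la R_n,R_n\ra_{\a,\b,\g} = \la Q_n,Q_n\ra_{\a,\b,\g} - 2c_n\la P_n,Q_n\ra_{\a,\b,\g} + c_n^2\la P_n,P_n\ra_{\a,\b,\g},
$$
and substitute $c_n=\la P_n,Q_n\ra_{\a,\b,\g}/\la P_n,P_n\ra_{\a,\b,\g}$; the last two terms collapse into $-\la P_n,Q_n\ra_{\a,\b,\g}^2/\la P_n,P_n\ra_{\a,\b,\g}$, which is the asserted value, and one may insert the closed forms from \thmref{thm:OP-ipd-abg} if an explicit expression is wanted. There is essentially no obstacle: the only points deserving a word of comment are the positivity of $\la P_n,P_n\ra_{\a,\b,\g}$ and the linear independence of $P_n,Q_n$, both immediate from the data in \thmref{thm:OP-ipd-abg}. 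The hypothesis $\a\ne\b$ plays no role in the argument itself; it is present only because for $\a=\b$ one already has $\la P_n,Q_n\ra_{\a,\b,\g}=0$ by \eqref{eq:ipdPQ}, so that $R_n=Q_n$ and the corollary collapses back to \thmref{thm:OP-ipd-abg}.
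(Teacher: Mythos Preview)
Your argument is correct and is precisely the Gram--Schmidt step the paper has in mind; in fact the paper offers no separate proof of this corollary, treating it as immediate from \thmref{thm:OP-ipd-abg}, so your write-up simply fills in the routine verification. One small remark: your computation yields
\[
 \la R_n,R_n\ra_{\a,\b,\g} = \la Q_n,Q_n\ra_{\a,\b,\g} - \frac{\la P_n,Q_n\ra_{\a,\b,\g}^{\,2}}{\la P_n,P_n\ra_{\a,\b,\g}},
\]
with a square in the numerator, which is the correct Gram--Schmidt identity; the displayed formula in the corollary as printed omits the exponent $2$, so what you derived is the intended (corrected) statement rather than the literal one.
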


\subsubsection{Fourier orthogonal expansions}
Let $L^2(\Omega, w_{\a,\g}, w_{\b,\g})$ be the space of functions defined on $\Omega$ such that $f(1,1)$ is finite
and the norm 
$$
  \|f\|_{L^2(\Omega, w_{\a,\g}, w_{\b,\g})} = \left(c_{\a,\g} \int_0^1 |f(x,1)|^2 w_{\a,\g}(x) dx  
           + \s c_{\b,\g} \int_0^1 |f(1,y)|^2 w_{\b,\g}(y) dy \right)^{\f12}
$$
is finite for every $f$ in this space. For $ f\in L^2(\Omega, w_{\a,\g}, w_{\b,\g})$ we consider the Fourier orthogonal expansion 
with respect to $\la \cdot,\cdot\ra_{\a,\b,\g}$. With respect to the orthogonal basis $\{P_n,R_n\}$ in 
Theorem \ref{thm:OP-ipd-abg} and Corollary \ref{cor:OP-Rn}, the Fourier orthogonal expansion is defined by
$$
 f  =  \wh f_0 + \sum_{n=1}^\infty \left[  \wh f_{P_n}P_n + \wh f_{R_n}R_n  \right],
$$
where 
$$
 \wh f_0:= \frac{ \la f,1\ra_{\a,\b,\g}}{\la 1,1\ra_{\a,\b,\g}}, \qquad
 \wh f_{P_n}:= \frac{\la f,P_n \ra_{\a,\b,\g}}{\la P_n,P_n \ra_{\a,\b,\g}},
  \qquad \wh f_{R_n}:= \frac{\la f,R_n \ra_{\a,\b,\g}}{\la R_n,R_n \ra_{\a,\b,\g}}.
$$
Its $n$-th partial sum is defined by 
$$
  S_n^{\a,\b,\g} f: = \wh f_0 + \sum_{k =1}^n \left[  \wh f_{P_k}P_k + \wh f_{R_k}R_k  \right]. 
$$
In this case, we do not have a closed form for the reproducing kernel with respect to $\la \cdot,\cdot\ra_{\a,\b,\g}$. 
Nevertheless, we can relate the convergence of the Fourier orthogonal expansions to that of the Fourier--Jacobi series. 
For $w_{\a,\g}$, we denote the partial sum defined in \eqref{eq:partial-sum} by $s_n^{\a,\g} f$. 

For $f$ defined on $\Omega$, we define $f_1(x) = f(x,1)$ and $f_2(x) = f(1,x)$, and 
$$
  g_1(x): = \frac{f(x,1)-f(1,1)}{1-x} \quad \hbox{and}\quad g_2(y): = \frac{f(1,y)-f(1,1)}{1-y}. 
$$
It is easy to see that if $f(\cdot,1) \in L^2(w_{\a,\g},[0,1])$, then $g_1 \in L^2(w_{\a,\g+2}, [0,1])$, and if
$f(1,\cdot) \in L^2(w_{\b,\g},[0,1])$, then $g_2 \in L^2(w_{\b,\g+2}, [0,1])$. 

\begin{thm} \label{thm:3.4}
Let $\a, \b, \g > -1$. Then the Fourier orthogonal expansion converges in $f\in L^2(\Omega, w_{\a,\g}, w_{\b,\g})$. 
Furthermore, for $f(\cdot,1) \in L^2(w_{\a,\g})$ and $f(1,\cdot) \in L^2(w_{\b,\g})$, 
\begin{align*}
  \| f - S_n^{\a,\b,\g} f\|_{\a,\b,\g} \le &\, c \left( \|f_1 - s_n^{\a,\g} f_1 \|_{L^2(w_{\a,\g})} 
    + \| f_2 - s_n^{\b,\g} f_2\|_{L^2(w_{\b,\g})} \right) \\ 
      & +  c \left( \|g_1 - s_n^{\a,\g+2} g_1 \|_{L^2(w_{\a,\g+2})} 
       + \| g_2- s_n^{\b,\g+2} g_2\|_{L^2(w_{\b,\g+2})} \right),
\end{align*}
where $c$ is a constant that depends only on $\a,\b,\g$. 
\end{thm}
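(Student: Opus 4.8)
The plan is to exploit that $S_n^{\a,\b,\g}f$ is an orthogonal projection. By \thmref{thm:OP-ipd-abg} and Corollary~\ref{cor:OP-Rn} the system $\{1\}\cup\{P_k,R_k\}_{k\ge1}$ is pairwise orthogonal for $\la\cdot,\cdot\ra_{\a,\b,\g}$, so $S_n^{\a,\b,\g}f$ is exactly the orthogonal projection of $f$ onto $V_n:=\mathrm{span}\{1,P_1,R_1,\dots,P_n,R_n\}=\bigoplus_{m=0}^n\CH_m^2(w_{\a,\g},w_{\b,\g})$, and hence
$$
\|f-S_n^{\a,\b,\g}f\|_{\a,\b,\g}=\min_{g\in V_n}\|f-g\|_{\a,\b,\g}.
$$
I would first make $V_n$ explicit. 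Identifying a function $f$ on $\Omega$ with the pair of its restrictions $f_1=f(\cdot,1)$, $f_2=f(1,\cdot)$, one reads off from \eqref{eq:op-P}, \eqref{eq:op-Q}, \eqref{eq:op-R} that $P_k(x,1)=P_k^{(\g,\a)}(2x-1)$ and $R_k(x,1)=\frac{(\g+\a+2)_k}{(\a+1)_{k-1}}(1-x)P_{k-1}^{(\g+2,\a)}(2x-1)-\frac{\la P_k,Q_k\ra_{\a,\b,\g}}{\la P_k,P_k\ra_{\a,\b,\g}}P_k^{(\g,\a)}(2x-1)$, with the symmetric formulas for the restriction to the second leg $\{(1,y):y\in[0,1]\}$. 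Thus every element of $V_n$ restricts to a pair $(q_1,q_2)$ of univariate polynomials of degree $\le n$ with $q_1(1)=q_2(1)$; since the space of all such pairs has dimension $2(n+1)-1=2n+1=\dim V_n$ (using $\dim\CH_0^2=1$, $\dim\CH_m^2=2$ for $m\ge1$), $V_n$ is precisely that space.

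This immediately gives norm convergence: the pairs $\bigl((1-x)\rho_1(x),(1-y)\rho_2(y)\bigr)$ with $\rho_1,\rho_2$ arbitrary polynomials all lie in $\bigcup_nV_n$, and since polynomials are dense in $L^2(w_{\a,\g+2})$ and in $L^2(w_{\b,\g+2})$ while $(1-x)^2w_{\a,\g}=w_{\a,\g+2}$ (similarly for $\b$), these pairs are dense in $L^2(\Omega,w_{\a,\g},w_{\b,\g})$. Hence $\min_{g\in V_n}\|f-g\|_{\a,\b,\g}\to0$, which is the asserted convergence.

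For the estimate I would plug a well-chosen $g^\ast\in V_n$ into the minimization. Write $f_1(x)=f(1,1)+(1-x)g_1(x)$ and $f_2(y)=f(1,1)+(1-y)g_2(y)$, with $g_1\in L^2(w_{\a,\g+2})$, $g_2\in L^2(w_{\b,\g+2})$ (as noted before the theorem), and let $g^\ast$ be the element of $V_n$ whose legs are $g^\ast(\cdot,1)=f(1,1)+(1-x)s_{n-1}^{\a,\g+2}g_1$ and $g^\ast(1,\cdot)=f(1,1)+(1-y)s_{n-1}^{\b,\g+2}g_2$ — polynomials of degree $\le n$ agreeing at the corner, hence admissible by the description of $V_n$. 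Since $f(x,1)-g^\ast(x,1)=(1-x)\bigl(g_1-s_{n-1}^{\a,\g+2}g_1\bigr)(x)$ and $(1-x)^2w_{\a,\g}=w_{\a,\g+2}$,
\begin{align*}
\|f-S_n^{\a,\b,\g}f\|_{\a,\b,\g}^2&\le\|f-g^\ast\|_{\a,\b,\g}^2\\
&=c_{\a,\g}\,\|g_1-s_{n-1}^{\a,\g+2}g_1\|_{L^2(w_{\a,\g+2})}^2+\s\,c_{\b,\g}\,\|g_2-s_{n-1}^{\b,\g+2}g_2\|_{L^2(w_{\b,\g+2})}^2 .
\end{align*}
To reach the exact stated form one must still trade $s_{n-1}^{\a,\g+2}$ for $s_n^{\a,\g+2}$, i.e.\ absorb the single extra Fourier mode of $g_1$ of index $n$; expanding $f_1$ in the $P_j^{(\g,\a)}(2x-1)$ via the connection formula \eqref{eq:Jacobi-A} relates that mode to the $(n{+}1)$-st Fourier--Jacobi coefficient of $f_1$ with respect to $w_{\a,\g}$ plus a tail in the high coefficients of $g_1$, whose connection coefficients decay fast enough (from the standard asymptotics of $h_n^{\a,\g}$) to be controlled by $\|f_1-s_n^{\a,\g}f_1\|_{L^2(w_{\a,\g})}+\|g_1-s_n^{\a,\g+2}g_1\|_{L^2(w_{\a,\g+2})}$, and likewise on the $\b$-leg; this produces the two remaining terms.

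The one genuinely delicate point, and the reason the statement carries four terms rather than two, is the coupling of the two legs through the shared $P_k$-coefficients: for $\a\ne\b$ the coefficient $\wh f_{P_k}=\la f,P_k\ra_{\a,\b,\g}/\la P_k,P_k\ra_{\a,\b,\g}$ is a $\s$-weighted average of the $w_{\a,\g}$-Fourier coefficient of $f_1$ and the $w_{\b,\g}$-Fourier coefficient of $f_2$, so $S_n^{\a,\b,\g}f(\cdot,1)$ is genuinely not a partial sum of $f_1$ alone and a naive termwise comparison breaks down; the variational characterization above neutralizes this entirely, at the price of the small final bookkeeping step just described. An alternative route that yields the four terms more directly is to expand $S_n^{\a,\b,\g}f$ on each leg using the formulas above for $P_k(x,1)$ and $R_k(x,1)$: this writes $S_n^{\a,\b,\g}f(\cdot,1)$ as a polynomial of degree $\le n$ in the $P_j^{(\g,\a)}(2x-1)$ plus $(1-x)$ times one of degree $\le n-1$ in the $P_j^{(\g+2,\a)}(2x-1)$, with coefficients that are explicit linear combinations of the Fourier--Jacobi coefficients of $f_1,f_2,g_1,g_2$, and a termwise comparison with the expansions of $f_1$ and $g_1$ together with Parseval then gives the result — the $\b-\a$ and $\s$ dependence of the coupling coefficients being precisely what generates the cross terms in $f_1,f_2$.
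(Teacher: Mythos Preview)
Your approach is correct in substance and genuinely different from the paper's. The paper proceeds by Parseval, writing $\|f-S_n^{\a,\b,\g}f\|_{\a,\b,\g}^2$ as a tail sum over $|\wh f_{P_k}|^2\la P_k,P_k\ra_{\a,\b,\g}$ and $|\wh f_{R_k}|^2\la R_k,R_k\ra_{\a,\b,\g}$, then estimating each piece using the explicit asymptotics $h_n^{\a,\g}\sim n^{-1}$, $\la P_n,P_n\ra_{\a,\b,\g}\sim n^{-1}$, $\la Q_n,Q_n\ra_{\a,\b,\g}\sim n^{2\g+3}$, $\la P_n,Q_n\ra_{\a,\b,\g}\sim n^\g$, and the key observation $\la 1,Q_k\ra_{\a,\b,\g}=0$, which allows $\la f,Q_k\ra_{\a,\b,\g}$ to be rewritten directly in terms of the Fourier--Jacobi coefficients of $g_1$ and $g_2$. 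Your variational argument sidesteps this entirely: identifying $V_n$ with pairs of degree-$\le n$ polynomials matching at the corner and testing against $g^\ast(\cdot,1)=f(1,1)+(1-x)s_{n-1}^{\a,\g+2}g_1$, $g^\ast(1,\cdot)=f(1,1)+(1-y)s_{n-1}^{\b,\g+2}g_2$ immediately gives the clean two-term bound
\[
\|f-S_n^{\a,\b,\g}f\|_{\a,\b,\g}^2\le c_{\a,\g}\,\|g_1-s_{n-1}^{\a,\g+2}g_1\|_{L^2(w_{\a,\g+2})}^2+\s\,c_{\b,\g}\,\|g_2-s_{n-1}^{\b,\g+2}g_2\|_{L^2(w_{\b,\g+2})}^2,
\]
with explicit constants and no asymptotics. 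This is arguably more informative than the stated four-term inequality.

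The one place your write-up is genuinely incomplete is the final index shift from $s_{n-1}$ to $s_n$. Controlling the single extra mode $|\wh{g_1}_n^{\,\a,\g+2}|^2 h_n^{\a,\g+2}$ by $\|f_1-s_n^{\a,\g}f_1\|_{L^2(w_{\a,\g})}$ via \eqref{eq:Jacobi-A} is doable, but inverting that connection formula and tracking the coefficient growth is exactly the sort of $n^{\g}$-style bookkeeping you have otherwise avoided; your sketch (``decay fast enough'') does not actually carry this out. If you want to preserve the elegance of the variational route, it would be cleaner simply to state your two-term bound with $s_{n-1}$ as the conclusion, since it already implies both convergence and the same rate information.
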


\begin{proof}
Since polynomials are dense on $\Omega$, by the Weierstrass theorem, the orthogonal basis $\{P_n,R_n\}$
is complete, so that the Fourier orthogonal expansion converges in $L^2(\Omega, w_{\a,\g}, w_{\b,\g})$. By 
the Parseval identity,
$$
   \| f - S_n^{\a,\b,\g} f\|_{\a,\b,\g}^2 = \sum_{k= n+1}^\infty |\wh f_{P_k}|^2 \la P_k, P_k\ra_{\a,\b,\g} +  
       \sum_{k= n+1}^\infty |\wh f_{R_k}|^2 \la R_k, R_k \ra_{\a,\b,\g}.
$$

Throughout this proof we use the convention $A \sim B$ if $c_1 B \le A \le c_2 A$, where $c_1$ and
$c_2$ are constants that are independent of varying parameters in $A$ and $B$. By \eqref{Jacobi-norm}
and the fact that $\Gamma(n+\a+1)/n! \sim n^\a$, it is easy to see that $h_n^{\a,\g} \sim n^{-1}$, so that 
$$
  \la P_n,P_n \ra_{\a,\b,\g} \sim n^{-1}, \qquad   \la Q_n,Q_n \ra_{\a,\b,\g} \sim n^{2\g+3}, \qquad
   \la P_n, Q_n\ra_{\a,\b,\g} \sim n^\g,
$$
and, consequently, 
$$
  \la R_n R_n\ra_{\a,\b,\g} \sim n^{2 \g+3} - n^{2 \g}/ n^{-1} \sim n^{2\g+3}. 
$$
The Fourier--Jacobi coefficients of $f_1$ and $f_2$ are denoted by $\wh{f_1}_n^{\a,\g}$ and
$\wh{f_2}_n^{\b,\g}$, respectively. It follows readily that $\wh f_{P_n} \sim  \wh{f_1}_n^{\a,\g} 
+ \wh{f_2}_n^{\b,\g}$, consequently,   
\begin{align*}
\sum_{k=n+1}^\infty |\wh f_{P_k}|^2 \la P_k,P_k\ra_{\a,\b,\g} \le &\, c \sum_{k=n+1}^\infty 
     \Big( |\wh{f_1}_k^{\a,\g}|^2 h_k^{\a,\g} + |\wh{f_2}_k^{\b,\g}|^2 h_k^{\b,\g}\Big) \\
     \le &\, c  \left( \|f_1 - s_n^{\a,\g} f_1 \|_{L^2(w_{\a,\g})} 
    + \| f_2 - s_n^{\b,\g} f_2\|_{L^2(w_{\b,\g})} \right).     
\end{align*}
We now consider the estimate for $R_n$ part. By the definition of $R_n$, 
\begin{align*}
   \la f, R_n\ra_{\a,\b,\g} \sim \la f, Q_n \ra_{\a,\b,\g} - n^{\g+1} \la f, P_n \ra_{\a,\b,\g}. 
\end{align*}
It is easy to see that 
$$
  \sum_{k=n+1}^\infty \frac{ | k^{\g+1}\la f, P_k \ra_{\a,\b,\g} |^2}{\la R_k,R_k\ra_{\a,\b,\g}} \sim \sum_{k=n+1}^\infty 
       k^{-1}  | \la f, P_k \ra_{\a,\b,\g} |^2 \sim 
   \sum_{k=n+1}^\infty k^{-2} |\wh f_{P_k}|^2 \la P_k,P_k\ra_{\a,\b,\g}, 
$$
so that we only have to work with the term $\la f, Q_k \ra_{\a,\b,\g}$. The definition of $Q_k$ shows that 
$\la 1, Q_k\ra_{\a,\b,\g} = 0$, which leads to the identity 
\begin{align*}
  \la f, Q_k \ra_{\a,\b,\g} = &\, \frac{(\g+\a+2)_k}{(\a+k)_{k-1}} c_{\a,\g}\int_0^1 (f(x,1) - f(1,1)) Q_k(x,1) x^\a (1-x)^\g dx \\
  & +  \frac{(\g+\b+2)_k}{(\b+n)_{k-1}}  c_{\b,\g}\int_0^1 (f(1,y) - f(1,1)) Q_k(1,y) y^\b (1-y)^\g dy\\
    = &\,\frac{(\g+\a+2)_k}{(\a+k)_{k-1}} \wh {g_1}_k^{\a,\g+2} h_k^{\a,\g+2} + 
  \frac{(\g+\b+2)_k}{(\b+n)_{k-1}}  \wh {g_2}_k^{\b,\g+2} h_k^{\b,\g+2}. 
\end{align*}
Consequently, it follows that 
\begin{align*}
 \sum_{k=n+1}^\infty \frac{|\la f, Q_k \ra_{\a,\b,\g} |^2}{\la R_k,R_k\ra_{\a,\b,\g}} 
 & \le c  \sum_{k=n+1}^\infty \left( k |\wh {g_1}_k^{\a,\g+2} h_k^{\a,\g+2}|^2 
     + k |\wh {g_2}_k^{\b,\g+2} h_k^{\b,\g+2}|^2 \right) \\
 & \le c  \sum_{k=n+1}^\infty \left( |\wh {g_1}_k^{\a,\g+2}|^2 h_k^{\a,\g+2}+ |\wh {g_2}_k^{\b,\g+2}|^2 h_k^{\b,\g+2}\right) \\
  & =  c \left( \|g_1 - s_n^{\a,\g+2} g_1 \|_{L^2(w_{\a,\g+2})}  + \| g_2- s_n^{\b,\g+2} g_2\|_{L^2(w_{\b,\g+2})} \right).
\end{align*} 
The proof is completed. 
\end{proof}

\section{Orthogonal polynomials on the boundary of the square}\label{Section:BoundarySquare}
\setcounter{equation}{0}

Using the results in the previous section, we can study orthogonal polynomials on a parallelogram. Since orthogonal 
structure is preserved under an affine transformation, we can assume without loss of generality that the parallelogram
is the square $[-1,1]^2$. 

For $\a,\g > -1$, let $\varpi_{\a,\g}$ be the weight function
$$
 \varpi_{\a,\g}(x):= |x|^{2\a+1} (1-x^2)^\g. 
$$
We consider orthogonal polynomials of two variables on the boundary of $[-1,1]^2$ with respect to the
bilinear form
\begin{align} \label{eq:int-boundary}
  \la f, g \ra = & \,  c_{\a,\g}\int_{-1}^1 [f(x,-1) g(x,-1) + f(x,1) g(x,1)]  \varpi_{\a,\g}(x) dx  \\ 
        &+   c_{\b,\g} \int_{-1}^1 [f(-1,y) g(-1,y)+ f(1,y) g(1,y)] \varpi_{\b,\g}(y) dy  \notag
\end{align}
for $\a,\b,\g > -1$. Since $(1-x^2)(1-y^2)$ vanishes on the boundary of the square, the bilinear form defines an 
inner product modulo the ideal generated by this polynomial, or in the space  
$$
  \RR[x,y]/I : = \RR[x,y] /\la (1-x^2) (1-y^2) \ra.
$$ 
Let $\CB\CV_n^2$ denote the space of orthogonal polynomials in $\RR[x,y]/I$ with respect to the inner product 
$\la \cdot,\cdot\ra$. 

\begin{prop}
For $n \ge 0$, the dimension of $\CB\CV_n^2$ is given by 
$$
  \dim \CB\CV_n^2 = n+1, \quad n =0,1,2, \quad \hbox{and}  \quad \dim \CB\CV_n^2 =4, \quad  n \ge 3.
$$
\end{prop}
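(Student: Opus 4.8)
The plan is to follow the same route as the analogous statement for the wedge: first show that $\la\cdot,\cdot\ra$ is an honest inner product on $\RR[x,y]/I$, and then turn the dimension count into a computation of the Hilbert function of that quotient ring.

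For the first point, suppose $p\in\RR[x,y]$ has $\la p,p\ra = 0$. Then $p$ vanishes $\varpi_{\a,\g}$- and $\varpi_{\b,\g}$-almost everywhere, hence, being continuous, identically, on each of the four edges $x=\pm1$ and $y=\pm1$ of the square. Vanishing on $x=1$ forces $p(1,y)\equiv0$ as a polynomial in $y$, so $(1-x)\mid p$; likewise $(1+x)\mid p$, $(1-y)\mid p$ and $(1+y)\mid p$, and since these four linear polynomials are pairwise coprime, their product $(1-x^2)(1-y^2)$ divides $p$, i.e. $p\in I$. Hence the bilinear form is positive definite on the image $V_n$ of $\Pi_n^2$ in $\RR[x,y]/I$, so $V_n=V_{n-1}\oplus\CB\CV_n^2$ is an orthogonal decomposition and
$$
  \dim\CB\CV_n^2=\dim V_n-\dim V_{n-1}.
$$

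To compute $\dim V_n$ I would use that the top-degree homogeneous part of $(1-x^2)(1-y^2)$ is the single monomial $x^2y^2$, which is not a zero divisor. On one hand this gives $\deg\big((1-x^2)(1-y^2)h\big)=4+\deg h$ for every $h$, so $\Pi_n^2\cap I=(1-x^2)(1-y^2)\,\Pi_{n-4}^2$ and $\dim V_n=\binom{n+2}{2}-\binom{n-2}{2}$, where $\binom{k}{2}:=0$ for $k<2$; this is $1,3,6$ for $n=0,1,2$ and $4n-2$ for $n\ge3$. Equivalently, reducing monomials modulo $I$ via $x^2y^2\equiv x^2+y^2-1$ strictly lowers the degree and expresses every polynomial of degree $\le n$ in terms of the ``reduced'' monomials $x^ay^b$ with $\min\{a,b\}\le1$ of degree $\le n$; a leading-form comparison shows these reduced monomials are linearly independent in $\RR[x,y]/I$, hence a basis of $V_n$, and counting them by degree gives $1$ in degree $0$, $2$ in degree $1$, $3$ in degree $2$, and exactly $4$ — namely $x^d$, $x^{d-1}y$, $xy^{d-1}$, $y^d$ — in every degree $d\ge3$. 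Subtracting consecutive values of $\dim V_n$ now yields $\dim\CB\CV_n^2=n+1$ for $n=0,1,2$ and $\dim\CB\CV_n^2=4$ for $n\ge3$.

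The only genuinely delicate step is the positive-definiteness argument: one must make sure that $\la p,p\ra=0$ forces $p$ to be divisible by the whole product $(1-x^2)(1-y^2)$, not merely to vanish on the four closed segments, and this is exactly where the coprimality of the four linear factors is used. Everything afterwards is bookkeeping; the one place to be slightly watchful is that the dimensions $\dim V_n$ have not yet stabilised for $n\le3$, so the values $n+1$ for $n=0,1,2$ must be read off individually before the pattern settles to $4$.
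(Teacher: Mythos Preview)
Your argument is correct. The paper actually states this proposition without proof, so there is nothing to compare your approach against directly; the paper's implicit justification is the subsequent Theorem~\ref{thm:boundaryOP}, which exhibits the right number of linearly independent orthogonal polynomials in each degree, together with the obvious upper bound coming from the degree-$4$ generator of $I$. Your proof makes both halves explicit: the positive-definiteness step (vanishing on all four edges forces divisibility by each of the four coprime linear factors, hence by their product) and the Hilbert-function computation $\dim V_n=\binom{n+2}{2}-\binom{n-2}{2}$ via the fact that the leading form $x^2y^2$ of the generator is a non-zerodivisor. Both are clean and complete; the alternative monomial-reduction description you give (the reduced monomials $x^ay^b$ with $\min\{a,b\}\le1$) is a nice concrete basis that matches the count $1,2,3,4,4,\ldots$ degree by degree.
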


Recall that the inner product $\la \cdot,\cdot\ra_{\a,\b,\g}$ studied in the previous section contains a fixed parameter
$\sigma$. For fixed $\a,\b$ and $\d_1,\d_2 \in \{0,1\}$, we define $p_{m,1}^{\a+\d_1,\b+\d_2,\g}$ and 
$p_{m,2}^{\a+\d_1,\b+\d_2,\g}$ to be a basis of $\CH_m^2(w_{\a+\d_1,\g},w_{\b+\d_2,\g})$ for a particular choice of 
$\s$ defined by
\begin{equation} \label{eq:sigma}
    \s_{\d_1,\d_2} =  \frac{ c_{\b,\g} c_{\a+\d_1,\g}}{c_{\a,\g} c_{\b+\d_2,\g}}.
\end{equation}
For example, $p_{m,i}^{\a,\b,\g}$ are defined with $\s_{0,0} =1$ and $p_{m,i}^{\a+1,\b,\g}$ are defined with 
$\s_{1,1}  = (\a+\g+2)/(\a+1)$. For each pair of $\a+\d_1, \, \b+\d_2$, we can choose, for example, 
$p_{m,1}^{\a+\d_1,\b+\d_2,\g}  = P_m$ defined in \eqref{eq:op-P} and  take $p_{m,2}^{\a+\d_1,\b+\d_2,\g} 
= Q_m$ defined in \eqref{eq:op-Q} or $p_{m,2}^{\a+1,\b+1,\g} = R_m$ defined in \eqref{eq:op-R}. 

\begin{thm} \label{thm:boundaryOP}
For $n = 0, 1,2,$ a basis for $\CB\CV_n$ is denoted by $Y_{n,i}$ and given by 
\begin{align*}
  &  Y_{0,1}(x,y) = 1, \quad Y_{1,1}(x,y) = x \quad Y_{1,2}(x,y) = y,  \\ 
  & Y_{2,1}(x,y) = p_{1,1}^{\a,\b,\g}(x^2,y^2),\quad Y_{2,2}(x,y) = xy, \quad Y_{2,3}(x,y) = p_{1,2}^{\a,\b,\g}(x^2,y^2).
\end{align*}
For $n \ge 3$, the four polynomials in $\CB\CV_n^2$ that are linearly independent modulo the 
ideal can be given by 
\begin{align*}
  Y_{2m,1}(x,y) & =  p_{m,1}^{\a,\b,\g}(x^2,y^2),\\
  Y_{2m,2}(x,y) & =  p_{m,2}^{\a,\b,\g}(x^2,y^2),\\
  Y_{2m,3}(x,y) & =  x y \,p_{m-1,1}^{\a+1,\b+1,\g}(x^2,y^2),\\
  Y_{2m,4}(x,y) & =  x y \, p_{m-1,2}^{\a+1,\b+1,\g}(x^2,y^2)
\end{align*}
for $n =2m \ge 2$, and
\begin{align*}
  Y_{2m+1,1}(x,y) & =  x \,p_{m,1}^{\a+1,\b,\g}(x^2,y^2),\\
  Y_{2m+1,2}(x,y) & =  x \,p_{m,2}^{\a+1,\b,\g}(x^2,y^2),\\
  Y_{2m+1,3}(x,y) & =  y \,p_{m,1}^{\a,\b+1,\g}(x^2,y^2), \\
  Y_{2m+1,4}(x,y) & =  y \,p_{m,2}^{\a,\b+1,\g}(x^2,y^2)
\end{align*}
for $n=2m+1 \ge 3$. In particular, these bases satisfy the equation $\partial_x^2 \partial_y^2 u = 0$. 
\end{thm}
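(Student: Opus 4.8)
The plan is to use the evenness of $\varpi_{\a,\g}$ to split the square-boundary problem into four copies of the wedge problem. First I would observe that the bilinear form \eqref{eq:int-boundary} is invariant under each of the sign changes $x\mapsto -x$ and $y\mapsto -y$: the weight $\varpi_{\a,\g}$ is even and each sign change merely interchanges the two parallel edges of the boundary while preserving the corresponding pair of edge integrals. Hence the quotient $\RR[x,y]/I$ decomposes orthogonally into its four parity classes, and so does $\CB\CV_n^2$. Writing a polynomial in parity class $(\d_1,\d_2)\in\{0,1\}^2$ as $x^{\d_1}y^{\d_2}U(x^2,y^2)$, the substitution $t=x^2$, $s=y^2$ identifies the even-even polynomials with $\RR[t,s]$; since $(1-x^2)(1-y^2)=(1-t)(1-s)$ is even-even, the assignment $U\mapsto x^{\d_1}y^{\d_2}U(x^2,y^2)$ sets up a linear bijection from $\RR[t,s]/\la(1-t)(1-s)\ra$ onto the parity-$(\d_1,\d_2)$ subspace of $\RR[x,y]/I$, carrying the degree-$m$ part to the degree-$(2m+\d_1+\d_2)$ part.

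Next I would transport the inner product through this bijection. On the top and bottom edges $y=\pm1$ one has $y^2=1$, and the two contributions coincide because the sign $(-1)^{\d_2}$ gets squared; using $\int_{-1}^1 h(x^2)\varpi_{\a,\g}(x)\,dx=\int_0^1 h(t)\,w_{\a,\g}(t)\,dt$ together with $x^2\varpi_{\a,\g}=\varpi_{\a+1,\g}$, one finds for $p=x^{\d_1}y^{\d_2}U(x^2,y^2)$ and $q=x^{\d_1}y^{\d_2}V(x^2,y^2)$ that
\begin{align*}
 \la p,q\ra = 2c_{\a,\g}\int_0^1 U(t,1)V(t,1)\,w_{\a+\d_1,\g}(t)\,dt + 2c_{\b,\g}\int_0^1 U(1,s)V(1,s)\,w_{\b+\d_2,\g}(s)\,ds.
\end{align*}
Comparing this with the definition of $\la\cdot,\cdot\ra_{\a+\d_1,\b+\d_2,\g}$, it equals $\tfrac{2c_{\a,\g}}{c_{\a+\d_1,\g}}\la U,V\ra_{\a+\d_1,\b+\d_2,\g}$ precisely when the parameter $\s$ in the latter is taken to be $\s_{\d_1,\d_2}$ of \eqref{eq:sigma}. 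Therefore the degree-$(2m+\d_1+\d_2)$ orthogonal polynomials of $\CB\CV_n^2$ lying in parity class $(\d_1,\d_2)$ are exactly the images under $U\mapsto x^{\d_1}y^{\d_2}U(x^2,y^2)$ of an orthogonal basis of $\CH_m^2(w_{\a+\d_1,\g},w_{\b+\d_2,\g})$ computed with $\s=\s_{\d_1,\d_2}$; choosing there the explicit pair $\{P_m,Q_m\}$ from Theorems~\ref{thm:ipd-w-op} and \ref{thm:OP-ipd-abg} (or $\{P_m,R_m\}$ from Corollary~\ref{cor:OP-Rn} when $\a+\d_1\neq\b+\d_2$) produces exactly the list $Y_{n,i}$. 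Finally, a given degree $n$ is reached only by the classes $ee$ and $oo$ when $n$ is even and by $oe$ and $eo$ when $n$ is odd; summing $\dim\CH_m^2$ ($=2$ for $m\ge1$, $=1$ for $m=0$) over those classes recovers the dimension formula of the preceding Proposition, and in particular shows the $Y_{n,i}$ are linearly independent modulo $I$ and span $\CB\CV_n^2$.

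For the differential relation I would use that each $p_{m,j}^{\a',\b',\g}$ above is, by \eqref{eq:op-P}--\eqref{eq:op-R}, of the separated form $A(t)+B(s)$. Consequently every $Y_{n,i}$ equals $x^{\d_1}y^{\d_2}A(x^2)+x^{\d_1}y^{\d_2}B(y^2)$, a sum of two products $f(x)g(y)$ in each of which one factor ($y^{\d_2}$ in the first, $x^{\d_1}$ in the second) has degree at most $1$. Since $\partial_x^2\partial_y^2\bigl(f(x)g(y)\bigr)=f''(x)g''(y)$ vanishes whenever $\deg f\le1$ or $\deg g\le1$, we get $\partial_x^2\partial_y^2Y_{n,i}=0$; the cases $Y_{0,1}=1$, $Y_{1,i}$ and $Y_{2,2}=xy$ are immediate.

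The step I expect to be most delicate is organizational rather than deep: verifying that the ideal $I$ meets each parity class in exactly the pullback of $\la(1-t)(1-s)\ra$ (so that the four wedge pieces fit together without overlap and without gaps), tracking the four values $\s_{\d_1,\d_2}$ correctly through the edge substitutions, and dealing with the low-degree anomalies $n=0,1,2$ where the relevant $\CH_m^2$ has dimension $1$ and the count therefore yields $n+1$ instead of $4$.
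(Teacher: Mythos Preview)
Your proposal is correct and follows essentially the same approach as the paper: both arguments rest on the parity of the edge integrals together with the change of variables $\int_{-1}^1 f(x^2)\varpi_{\a,\g}(x)\,dx=\int_0^1 f(t)w_{\a,\g}(t)\,dt$, which reduces the square-boundary inner product on each parity class to the wedge inner product $\la\cdot,\cdot\ra_{\a+\d_1,\b+\d_2,\g}$ with $\s=\s_{\d_1,\d_2}$. The paper carries this out by checking the orthogonalities $\la Y_{n,i},Y_{k,j}\ra$ case by case, whereas you package the same computations into a single parity decomposition of $\RR[x,y]/I$ and a single transport-of-structure statement; your argument for $\partial_x^2\partial_y^2 Y_{n,i}=0$ via the separated form $A(t)+B(s)$ is also the same in spirit as the paper's observation that $Y_{n,j}=\xi(x,y)u(x)+\eta(x,y)v(y)$ with $\xi,\eta$ linear.
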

  
\begin{proof}
The proof relies on the parity of the integrals. For example, it is easy to see that $\la x f(x^2,y^2), g(x^2,y^2) \ra =0$ 
and $\la y f(x^2,y^2), g(x^2,y^2) \ra =0$ for any polynomials $f$ and $g$, which implies, in particular, that 
$\la Y_{2m,i}, Y_{2n+1,j}\ra =0$ for  $i,j = 1,2,3,4$. Furthermore, it is easy to see that $\la x y f(x^2,y^2), g(x^2,y^2)\ra = 0$ for any polynomials $f$ 
and $g$. Hence, $\la Y_{2m,i},Y_{2k,j}\ra =0$ for $i =1,2$ and $j=3,4$. 
Furthermore, using the relation
\begin{equation} \label{eq:4.3}
   \int_{-1}^1 f(x^2) |x|^{2\a+1} (1-x^2)^\g dx =  \int_0^1 f(x) x^\a (1-x)^\g dx,
\end{equation}
it is easy to see that
\begin{align*}
   \la Y_{2m,i}, Y_{2 k,j} \ra & = \la p_{m,i}^{\a, \b,\g}, p_{k,j}^{\a,\b,\g} \ra_{\a,\b,\g}, \quad i,j = 1, 2 \\
   \la Y_{2m,i}, Y_{2 k,j} \ra & = \frac{c_{\a,\g}}{c_{\a+1,\g}} \la p_{m,i}^{\a+1, \b+1,\g}, 
      p_{k,j}^{\a+1, \b+1,\g} \ra_{{\a+1, \b+1,\g}},  \quad i,j = 3,4,
\end{align*}
where in the second identity, we have adjusted the normalization constants of integrals from $c_{\a,\g}$ and 
$c_{\b,\g}$ to $c_{\a+1,\g}$ and $c_{\b+1,\g}$, respectively, and used our choice of $\s_{1,1}$. Hence,
with our choice of $\s_{0,0}$ and $\s_{1,1}$, we see that $Y_{2m,i}$ is orthogonal to $Y_{2k,j}$ for 
$i,j =1,2$ and $i,j = 3,4$, respectively. Similarly, by the same consideration, we obtain that 
\begin{align*}
   \la Y_{2m+1,i}, Y_{2 k+1,j} \ra &\, = \frac{c_{\a,\g}}{c_{\a+1,\g}} 
        \la p_{m,i}^{\a+1,\b,\g}, p_{k,j}^{\a+1,\b,\g} \ra_{\a+1, \b,\g}, \quad i,j = 1, 2 \\
   \la Y_{2m+1,i}, Y_{2 k+1,j} \ra & \, = \la p_{m,i}^{\a,\b+1,\g}, p_{k,j}^{\a,\b+1,\g} \ra_{\a,\b+1,\g},  \quad i,j = 3,4,
\end{align*}
which shows, with our choice of $\s_{0,1}$ and $\s_{1,0}$, that $Y_{2m+1,i}$ is orthogonal to $Y_{2k+1,j}$ for 
$i,j =1,2$ and $i,j = 3,4$, respectively. Finally, since $\partial_x \partial_y p_{n,i}^{\a,\b}(x,y) =0$, we see that 
$Y_{n,j} = \xi(x,y) u(x) + \eta(x,y) v(x)$, where $\xi$ and $\eta$ are linear polynomial of $x,y$, so that it is evident
that $\partial_x^2 \partial_y^2 Y_{n,j}(x,y)=0$.  
\end{proof} 

In our notation, the case $\a = -\frac12$ $\b= - \f12$ and $\g =0$ corresponds to the inner product in which 
the integrals are unweighted. 

Let $L^2([-1,1]^2, \varpi_{\a,\g},  \varpi_{\b,\g})$ be the space of functions defined on the boundary of $[-1,1]^2$ such
that $f(\pm 1, \pm 1)$ are finite and the norm
\begin{align*}
  \|f\|_{L^2(\varpi_{\a,\g},  \varpi_{\b,\g})} = & 
     \left( c_{\a,\g} \int_{-1}^1 \left( |f(x,1)|^2+|f(x,-1)|^2 \right) \varpi_{\a,\g}(x)dx \right. \\ 
    &  \left.  + c_{\b,\g} \int_{-1}^1 \left( |f(1,y)|^2+|f(-1,y)|^2 \right) \varpi_{\b,\g}(y)dy \right)^{\f12}.
\end{align*}
is finite for every $f$. For $f \in L^2([-1,1]^2, \varpi_{\a,\g},  \varpi_{\b,\g})$, its Fourier orthogonal expansion is defined by
$$
  f = \sum_{n=0}^2 \sum_{i=1}^{n+1} \wh f_{n,i} Y_{n,i}^{\a,\b,\g} +  \sum_{n=3}^\infty
       \sum_{i=1}^{4} \wh f_{n,i} Y_{n,i}^{\a,\b,\g},
        \qquad \wh f_{n,i} = \frac{\la f, Y_{n,i}^{\a,\b,\g}\ra}{\la Y_{n,i}, Y_{n,i}^{\a,\b,\g}\ra}. 
$$
For $n \ge 2$, let $S_n (f)$ denotes its $n$-th partial sum defined by  
$$
  S_n f = \sum_{k=0}^2 \sum_{i=1}^{k+1} \wh f_{k,i} Y_{k,i}^{\a,\b,\g} +  \sum_{k=3}^n
       \sum_{i=1}^{4} \wh f_{k,i} Y_{k,i}^{\a,\b,\g}. 
$$ 

For fixed $\a,\b,\g$, let $\la \cdot,\cdot\ra_{\a+\delta_1,\b+\delta_2,\g}$ be the inner product defined in the previous 
section with $\s = \s^{\a,\b,\g}$. For $f$ defined on $[-1,1]^2$, we define four functions
\begin{align*}
  F_{e,e}(x,y) &\, = \tfrac14\left[ f(x,y) + f(-x,y)+ f(x,-y) + f(-x,-y) \right], \\
  F_{e,o}(x,y) &\, = \tfrac14\left[ f(x,y) + f(-x,y) - f(x,-y) - f(-x,-y) \right], \\
  F_{o,e}(x,y) &\, = \tfrac14\left[ f(x,y) - f(-x,y) + f(x,-y) - f(-x,-y) \right], \\
  F_{o,o}(x,y) &\, = \tfrac14\left[ f(x,y) - f(-x,y) - f(x,-y) + f(-x,-y) \right], 
\end{align*}
where the subindices indicate the parity of the function. For example, $F_{e,o}$ is even in $x$ variable and odd in $y$ variable. 
By definition,
$$
   f(x,y) = F_{e,e}(x,y) + F_{e,o}(x,y) + F_{o,e}(x,y) +F_{o,o}(x,y).
$$
We further define 
\begin{align*}
   G_{0,0}(x,y) &\, = F_{e,e}(x,y), \quad   G_{0,1}(x,y) = y^{-1} F_{e,o}(x,y) , \\
   G_{1,0}(x,y) & \, = x^{-1} F_{o,e}(x,y), \quad   G_{1,1}(x,y) = x^{-1} y^{-1} F_{o,o}(x,y)
\end{align*}
and define $\psi: \RR^2 \mapsto \RR^2$ by $\psi: (x,y) \mapsto (\sqrt{x},\sqrt{y})$. Changing variables in integrals as
in \eqref{eq:4.3}, we see that if $f\in L^2([-1,1]^2, \varpi_{\a,\g}, \varpi_{\b,\g})$, then
$G_{\delta_1,\delta_2}\circ \psi \in L^2(\CB, w_{\a+\delta_1,\g}, w_{\b+\delta_2,\g})$ for $\delta_i \in \{0,1\}$. 

\begin{thm}\label{thm:squareconv}
For $f\in L^2([-1,1]^2, \varpi_{\a,\g}, \varpi_{\b,\g})$, 
\begin{align*}
S_{2 m} f (x,y) = &\, S_m^{\a,\b,\g} G_{0,0}\circ \psi (x^2,y^2)+ y S_{m-1}^{\a,\b+1,\g} G_{0,1} \circ \psi (x^2,y^2) \\
     &\,  + x S_{m-1}^{\a+1,\b,\g} G_{1,0} \circ \psi(x^2,y^2)
    + x y S_{m-1 }^{\a+1,\b+1,\g} G_{1,1} \circ \psi(x^2,y^2),  
\end{align*}
\begin{align*}
S_{2 m+1} f (x,y) = &\, S_m^{\a,\b,\g} G_{0,0}\circ \psi (x^2,y^2)+ y S_{m}^{\a,\b+1,\g} G_{0,1} \circ \psi (x^2,y^2) \\
     &\,  + x S_{m}^{\a+1,\b,\g} G_{1,0} \circ \psi(x^2,y^2)
    + x y S_{m-1}^{\a+1,\b+1,\g} G_{1,1} \circ \psi(x^2,y^2). 
\end{align*}
In particular, the norm of $S_n f - f$ is bounded by those of $S_{m}^{\a+\d_1,\b+\d_2,\g} G_{\d_1,\d_2} - G_{\d_1,\d_2}$
as in Theorem \ref{thm:3.4}.
\end{thm}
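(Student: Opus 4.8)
The plan is to reduce everything to the wedge expansions from Section~\ref{Section:Wedge} by exploiting the parity decomposition $f = F_{e,e} + F_{e,o} + F_{o,e} + F_{o,o}$ and the fact that the basis $\{Y_{n,i}\}$ is sorted by parity. First I would observe that the partial sum operator $S_n$ respects the parity decomposition: since each $Y_{2m,1},Y_{2m,2}$ is a polynomial in $(x^2,y^2)$, each $Y_{2m,3},Y_{2m,4}$ is $xy$ times such a polynomial, each $Y_{2m+1,1},Y_{2m+1,2}$ is $x$ times such a polynomial, and each $Y_{2m+1,3},Y_{2m+1,4}$ is $y$ times such a polynomial, the four groups of basis elements have parities $(e,e)$, $(o,o)$, $(o,e)$, $(e,o)$ respectively. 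By the parity annihilation identities already used in the proof of Theorem~\ref{thm:boundaryOP} (e.g. $\la x\,h(x^2,y^2), g(x^2,y^2)\ra = 0$), the coefficient $\wh f_{n,i}$ picks out exactly the matching parity component of $f$: $\wh f_{2m,1},\wh f_{2m,2}$ depend only on $F_{e,e}$; $\wh f_{2m,3},\wh f_{2m,4}$ only on $F_{o,o}$; and so on. Hence $S_n f$ splits as a sum of four pieces, one built from each $F_{\d_1,\d_2}$.

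Next I would identify each piece with a wedge partial sum via the substitution $(x,y)\mapsto(\sqrt x,\sqrt y)$ and the factored-out monomial. Consider the $(e,e)$ piece of $S_{2m}f$: it is $\sum_{k}\sum_{i=1,2}\wh f_{2k,i}\,Y_{2k,i}(x,y) = \sum_{k}\sum_{i=1,2}\wh f_{2k,i}\,p_{k,i}^{\a,\b,\g}(x^2,y^2)$. Using \eqref{eq:4.3} to change variables, both the coefficient $\wh f_{2k,i}$ (a ratio of inner products over the square's boundary) and the norm $\la Y_{2k,i},Y_{2k,i}\ra$ transform into the corresponding quantities for $\la\cdot,\cdot\ra_{\a,\b,\g}$ applied to $G_{0,0}\circ\psi$; here the specific choice $\s_{0,0}=1$ makes the two boundary-pair integrals combine correctly (exactly as in the proof of Theorem~\ref{thm:boundaryOP}). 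Therefore $\wh f_{2k,i} = \wh{(G_{0,0}\circ\psi)}_{p_{k,i}^{\a,\b,\g}}$, and the $(e,e)$ piece equals $S_m^{\a,\b,\g}(G_{0,0}\circ\psi)(x^2,y^2)$. The same bookkeeping applied to the other three groups — using $\s_{1,1}$ for the $xy$ group (picking up the factor $c_{\a,\g}/c_{\a+1,\g}$ which is absorbed into the normalization as in Theorem~\ref{thm:boundaryOP}), $\s_{1,0}$ for the $x$ group, and $\s_{0,1}$ for the $y$ group — yields the remaining three terms $y\,S^{\a,\b+1,\g}_{\cdot}G_{0,1}\circ\psi(x^2,y^2)$, $x\,S^{\a+1,\b,\g}_{\cdot}G_{1,0}\circ\psi(x^2,y^2)$, $xy\,S^{\a+1,\b+1,\g}_{\cdot}G_{1,1}\circ\psi(x^2,y^2)$.

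The only genuinely delicate point is getting the \emph{degree counting} right, i.e.\ the indices $m$ versus $m-1$ on the various wedge partial sums, which is why the $S_{2m}$ and $S_{2m+1}$ formulas differ. For $S_n$ with $n=2m$: the $(e,e)$ group contains $Y_{2k,1},Y_{2k,2}$ for $0\le k\le m$, so it is an $m$-th wedge partial sum; the $(o,o)$ group contains $Y_{2k,3},Y_{2k,4}=xy\,p_{k-1,i}^{\a+1,\b+1,\g}(x^2,y^2)$ for $3\le 2k\le 2m$, i.e.\ $2\le k\le m$, so the internal wedge index runs $1\le k-1\le m-1$, giving an $(m-1)$-st partial sum; similarly the $(o,e)$ and $(e,o)$ groups $Y_{2k+1,i}$ appear for $3\le 2k+1\le 2m$, i.e.\ $1\le k\le m-1$, giving $(m-1)$-st partial sums. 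For $n=2m+1$ the $(o,e)$ and $(e,o)$ groups now run up to $2k+1=2m+1$, i.e.\ $k\le m$, bumping those two to $m$-th partial sums, while the $(e,e)$ group (even indices $\le 2m+1$ means $\le 2m$) stays at $m$ and the $(o,o)$ group stays at $m-1$. I would lay this out in a short table or three lines of index inequalities rather than prose. Once the identification of each block is established, the final sentence — the norm bound on $S_n f - f$ — follows immediately: by orthogonality of the four parity blocks under $\la\cdot,\cdot\ra$ and the change of variables \eqref{eq:4.3}, $\|S_n f - f\|_{L^2(\varpi_{\a,\g},\varpi_{\b,\g})}^2$ is (a fixed constant times) the sum of the four squared wedge errors $\|S^{\a+\d_1,\b+\d_2,\g}_{\cdot}(G_{\d_1,\d_2}\circ\psi) - G_{\d_1,\d_2}\circ\psi\|^2$, each of which is controlled by Theorem~\ref{thm:3.4}. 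I expect the index bookkeeping to be the main obstacle, everything else being a transcription of the parity arguments already in the proof of Theorem~\ref{thm:boundaryOP} together with \eqref{eq:4.3}.
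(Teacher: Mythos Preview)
Your proposal is correct and follows essentially the same route as the paper: split $f$ by parity, observe that each parity class pairs with exactly one of the four groups of basis elements, and identify the resulting coefficients with wedge Fourier coefficients via the change of variables \eqref{eq:4.3} and the choice of $\s_{\d_1,\d_2}$, after which the degree count and the norm bound are bookkeeping. One small correction to your index accounting: do not forget the low-degree elements $Y_{1,1}=x$, $Y_{1,2}=y$, and $Y_{2,2}=xy$, which supply the degree-$0$ wedge terms in the $(o,e)$, $(e,o)$, and $(o,o)$ blocks respectively, so that e.g.\ the $(o,o)$ wedge index actually runs over $0\le j\le m-1$ rather than $1\le j\le m-1$.
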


\begin{proof}
Using the parity of the function, it is easy to see that 
$$
  \frac{\la f, Y_{2m,i}\ra}{\la Y_{2m,i}, Y_{2m,i}\ra} = \frac{\la F_{e,e}, Y_{2m,i} \ra}{\la Y_{2m,i}, Y_{2m,i}\ra}  =
   \frac{\la G_{0,0} \circ \psi, p_{2m,i}^{\a,\b,\g}\ra_{\a,\b,\g}}{\la  p_{2m,i}^{\a,\b,\g}, p_{2m,i}^{\a,\b,\g}\ra_{\a,\b,\g}}, \qquad i = 1,2,
$$
where we have used the fact that $F_{e,e}$ is even in both variables and use the change of variables in integrals 
as in \eqref{eq:4.3}. The similar procedure can be used in the other three cases, as $G_i(x,y)$ is even in both
variables, and the result is 
\begin{align*}
  \frac{\la f, Y_{2m,i}\ra}{\la Y_{2m,i}, Y_{2m,i}\ra} &\, = \frac{\la F_{o,o}, Y_{2m,i} \ra}{\la Y_{2m,i}, Y_{2m,i}\ra}  =
  \frac{\la G_{1,1} \circ \psi, p_{m,i}^{\a+1,\b+1,\g}\ra_{\a+1,\b+1,\g}}{\la  p_{2m,i}^{\a+1,\b+1,\g}, p_{2m,i}^{\a+1,\b+1,\g}
    \ra_{\a+1,\b+1,\g}}, \quad i = 3,4, \\
  \frac{\la f, Y_{2m+1,i}\ra}{\la Y_{2m+1,i}, Y_{2m+1,i}\ra} &\, = \frac{\la F_{e,o}, Y_{2m+1,i} \ra}{\la Y_{2m+1,i}, Y_{2m+1,i}\ra}  =
   \frac{\la G_{0,1} \circ \psi, p_{m,i}^{\a,\b+1,\g}\ra_{\a,\b+1,\g}}{\la  p_{2m,i}^{\a,\b+1,\g}, p_{2m,i}^{\a,\b+1,\g}
    \ra_{\a,\b+1,\g}}, \quad i = 1,2, \\
         \frac{\la f, Y_{2m+1,i}\ra}{\la Y_{2m+1,i}, Y_{2m+1,i}\ra} &\, = \frac{\la F_{o,e}, Y_{2m+1,i} \ra}{\la Y_{2m+1,i}, Y_{2m+1,i}\ra}  =
  \frac{\la G_{1,0} \circ \psi, p_{m,i}^{\a+1,\b,\g}\ra_{\a+1,\b,\g}}{\la  p_{2m,i}^{\a+1,\b,\g}, p_{2m,i}^{\a+1,\b,\g}
    \ra_{\a+1,\b,\g}}, \quad i = 3,4. 
\end{align*}
Since $S_n^{\a+\d_1, \b+ \d_2, \g} G_{\d_1,\d_2} \circ \psi(x^2, y^2) \to G_{\d_1,\d_2} (x,y)$ and 
$$
        f(x,y) = G_{0,0}(x,y) + y G_{0,1}(x,y) + y G_{1,0}(x,y) + xy G_{1,1}(x,y),
$$
the last statement is evident. 
\end{proof}

\section{Orthogonal system on the square}\label{Section:Square}
\setcounter{equation}{0}

Let $w$ be a nonnegative weight function defined on $[0,1]$. Define 
$$
   W(x,y) = w(\max \{|x|,|y|\}), \qquad (x,y) \in [-1,1]^2.
$$  
We construct a system of orthogonal functions with respect to the inner product
$$
  \la f,\g\ra_W = \int_{-1}^1 \int_{-1}^1 f(x,y) g(x,y) W(x,y) dx dy. 
$$
by making use of the orthogonal polynomials on the boundary or the square, studied in the previous section. 
Our starting point is the following integral identity derived from changing variables $(x,y)
 \mapsto (s \xi,s\eta)$, 
\begin{equation} \label{eq:int-square}
  \int_{-1}^1 \int_{-1}^1 f(x,y) w(\max \{|x|,|y|\})dxdy = \int_0^s s \int_\CB f(s \xi, s \eta) d\s(\xi,\eta) w(s) ds,
\end{equation}
where $\int_\CB d \sigma$ denotes the integral on the boundary of the square,
$$
  \int_\CB f(\xi,\eta)d\s(\xi,\eta) = \int_{-1}^1 \left[f(\xi,1)+ f(\xi,-1)\right] d\xi +  \int_{-1}^1 \left[f(1 \eta)+ f(-1, \eta)\right] d\eta.
$$

Our orthogonal functions are similar in structure to orthogonal polynomials on the unit disk that are constructed 
using spherical harmonics. However, these function are polynomials in $(s, \xi,\eta)$ for the $(x,y) = (s \xi, s\eta) 
\in [-1,1]^2$, but not polynomials in $(x,y)$. 

Let $\CB\CV_n^2$ be the space of orthogonal polynomials on the boundary of $[-1,1]^2$ with respect to the inner
product
$$
 \la f,g\ra = \int_\CB f(\xi,\eta) g(\xi,\eta) d\s(\xi,\eta) , 
$$
which is the inner product with $\a = -\f12$, $\b = - \f12$ and $\g =0$ studied in the previous section. Let $Y_{n,i}$ be an 
orthogonal basis for $\CB\CV_n^2$. For $n \le 2$, they are defined by, see Theorem \ref{thm:boundaryOP}, 
\begin{align*}
 Y_{0,1}(x,y) & =1, \quad Y_{1,1}(x,y) = x, \quad Y_{1,2}(x,y) = y;  \\
 Y_{2,1}(x,y) & =  x^2 - \frac{2}{3}, \quad Y_{2,2}(x,y) = xy,\quad
 Y_{2,3}^2(x,y) = y^2 - \frac{2}{3},
\end{align*}
whereas for $n \ge 3$, they are constructed in Theorem \ref{thm:boundaryOP}. For $n \ge k $, denote 
by $P_{m, 2n-2k}$ the orthogonal polynomial of degree $m$ with respect to $t^{2n-2k+1} w(t)$ on $[0,1]$
and with $P_{0,2n-2k}(s):=1$. For $n \ge 0$ and $0 \le k \le n$, we define
$$
   Q_{k,i}^n (x,y):= P_{k, 2n-2k}(s) s^{n-k} Y_{n-k,i}\left(\frac{\xi}{s}, \frac{\eta}{s} \right),  
$$
where $i = 1,\ldots, \min\{n+1,4\}$. 

\begin{thm}
In the coordinates $(x,y) = s(\xi,\eta)$, the system of functions 
$$
      \{Q_{k,i}^n: i = 1,\ldots, \min\{n+1,4\}, \,\,  0\le k\le n, \,\, n\ge 0\}
$$
is a complete orthogonal basis for $L^2(W; [-1,1]^2)$.  
\end{thm}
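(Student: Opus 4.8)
The plan is to reduce the orthogonality and completeness of $\{Q_{k,i}^n\}$ on $[-1,1]^2$ to the corresponding facts about the orthogonal polynomials $Y_{n,i}$ on the boundary $\CB$ and the univariate orthogonal polynomials $P_{k,2n-2k}$ on $[0,1]$, using the integral identity \eqref{eq:int-square} as the bridge. First I would substitute the definition of $Q_{k,i}^n$ into $\la Q_{k,i}^n, Q_{l,j}^{n'}\ra_W$ and apply \eqref{eq:int-square} with $(x,y) = (s\xi, s\eta)$. Since $Y_{n-k,i}(\xi/s,\eta/s)$ is evaluated on the boundary $\CB$ and, being (essentially) homogeneous of degree $n-k$ in its arguments, satisfies $s^{n-k}Y_{n-k,i}(\xi/s,\eta/s)$ restricted appropriately, the inner integral $\int_\CB$ separates out as $\int_\CB Y_{n-k,i}(\xi,\eta) Y_{n'-l,j}(\xi,\eta)\, d\s$ times a power of $s$; this vanishes unless $n-k = n'-l$ and $i=j$, by the orthogonality of the $Y_{m,i}$ established in Theorem~\ref{thm:boundaryOP} (with $\a=\b=-\tfrac12$, $\g=0$).

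Assuming $n-k = n'-l =: m$, the remaining $s$-integral becomes $\int_0^1 P_{k,2m}(s) P_{l,2m}(s)\, s \cdot s^{2m}\, w(s)\, ds = \int_0^1 P_{k,2m}(s)P_{l,2m}(s)\, s^{2m+1} w(s)\, ds$, which vanishes unless $k=l$ by the definition of $P_{k,2m}$ as orthogonal polynomials with respect to the weight $t^{2m+1}w(t)$ on $[0,1]$. Combining the two, $\la Q_{k,i}^n, Q_{l,j}^{n'}\ra_W = 0$ unless $k=l$, $n-k=n'-l$ (hence $n=n'$), and $i=j$; and the norm $\|Q_{k,i}^n\|_W^2$ equals the product of the two univariate/boundary norms, which is positive. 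This gives orthogonality and that the system is genuinely infinite and non-degenerate.

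For completeness, I would argue as follows. Given $f \in L^2(W;[-1,1]^2)$ with $\la f, Q_{k,i}^n\ra_W = 0$ for all admissible $k,i,n$, use \eqref{eq:int-square} to write the inner product as $\int_0^1 s\, w(s) \big[\int_\CB f(s\xi, s\eta) s^{n-k} Y_{n-k,i}(\xi,\eta)\, d\s(\xi,\eta)\big] P_{k,2(n-k)}(s)\, ds = 0$. For each fixed integer $m \ge 0$ and each $i$, the bracketed quantity, call it $c_{m,i}(s)$, is (for a.e.\ $s$) the $(m,i)$-Fourier coefficient of the boundary function $\xi,\eta \mapsto f(s\xi,s\eta)$ against the complete orthogonal basis $\{Y_{m,i}\}$ of $\CB\CV^2$; and the vanishing of $\la f,Q_{k,i}^n\ra_W$ for all $k$ with $n-k=m$ says that $\int_0^1 c_{m,i}(s) P_{k,2m}(s)\, s^{2m+1} w(s)\, ds = 0$ for all $k \ge 0$, so by the completeness of univariate orthogonal polynomials in $L^2(s^{2m+1}w(s);[0,1])$ we get $c_{m,i}(s) = 0$ for a.e.\ $s$. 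Since this holds for every $m,i$, and $\{Y_{m,i}\}$ is complete in $L^2(\CB, d\s)$ (the boundary functions being dense there by Weierstrass, as used in the proof of Theorem~\ref{thm:3.4}), we conclude that for a.e.\ $s \in (0,1)$ the function $(\xi,\eta) \mapsto f(s\xi,s\eta)$ vanishes in $L^2(\CB, d\s)$; feeding this back into \eqref{eq:int-square} with $f^2$ in place of $f$ gives $\|f\|_{L^2(W)}^2 = 0$.

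The main obstacle I anticipate is handling the evaluation $Y_{n-k,i}(\xi/s, \eta/s)$ carefully: the $Y_{m,i}$ are polynomials in the square's boundary variables, and one must check that $s^{m} Y_{m,i}(\xi/s,\eta/s)$, when $(\xi,\eta)$ ranges over $\partial([-1,1]^2)$ and is rescaled by $s$, is exactly the object appearing in \eqref{eq:int-square} — i.e.\ that the substitution $(x,y) = (s\xi,s\eta)$ interacts correctly with the parity/degree structure of the $Y_{m,i}$ (note $Y_{m,i}$ is not homogeneous, e.g.\ $Y_{2,1} = x^2 - 2/3$, so "$s^{m}Y_{m,i}(\cdot/s,\cdot/s)$" is a polynomial in $s,\xi,\eta$ rather than homogeneous; this is precisely why the $Q_{k,i}^n$ are not polynomials in $(x,y)$). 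Tracking the powers of $s$ so that the boundary integral and the radial integral cleanly decouple, and confirming the normalization $P_{0,2m} \equiv 1$ and $Y_{0,1}\equiv 1$ reproduce the constant function, is the delicate bookkeeping; once that is in place, orthogonality and completeness follow mechanically from the two one-dimensional completeness statements as above.
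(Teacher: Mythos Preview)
Your orthogonality argument is essentially identical to the paper's: substitute into \eqref{eq:int-square}, separate the boundary and radial integrals, and invoke the orthogonality of $\{Y_{m,i}\}$ and $\{P_{k,2m}\}$ in turn.

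Your completeness argument, however, takes a genuinely different route. The paper argues algebraically: it shows that any polynomial in the variables $(s,\xi,\eta)$ (modulo the boundary ideal) can be expanded as a \emph{finite} linear combination of the $Q_{k,i}^n$, by first expanding in the $Y_{m,i}$ and then expanding the resulting $s$-polynomial coefficients in the $P_{k,2m}$; density of such polynomials in $L^2(W)$ then gives completeness. You instead run a direct $L^2$ argument of tensor-product type: fix $f\in L^2(W)$ orthogonal to every $Q_{k,i}^n$, form the boundary Fourier coefficients $d_{m,i}(s)=\int_\CB f(s\xi,s\eta)Y_{m,i}(\xi,\eta)\,d\sigma$, use completeness of $\{P_{k,2m}\}$ in $L^2(s^{2m+1}w)$ to force $d_{m,i}\equiv 0$, and then use completeness of $\{Y_{m,i}\}$ on $\CB$ to conclude $f=0$. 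Both are correct. Your approach is the cleaner Hilbert-space argument and avoids any appeal to Weierstrass; the paper's approach yields the extra structural fact that degree-$n$ ``polynomials'' in $(s,\xi,\eta)$ lie in the span of $\{Q_{k,i}^{n'}:n'\le n\}$.

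One bookkeeping point to tighten: with $d_{m,i}(s)$ as above, the vanishing of $\la f,Q_{k,i}^{k+m}\ra_W$ reads $\int_0^1 d_{m,i}(s)P_{k,2m}(s)\,s^{m+1}w(s)\,ds=0$, i.e.\ $\int_0^1 [s^{-m}d_{m,i}(s)]P_{k,2m}(s)\,s^{2m+1}w(s)\,ds=0$. So it is $s^{-m}d_{m,i}$ that you test against the $P_{k,2m}$; the needed membership $s^{-m}d_{m,i}\in L^2(s^{2m+1}w)$ is exactly $\int_0^1|d_{m,i}(s)|^2 s\,w(s)\,ds<\infty$, which follows from Cauchy--Schwarz on $\CB$ and \eqref{eq:int-square}. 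Your statement that the vanishing is ``$\int_0^1 c_{m,i}(s)P_{k,2m}(s)\,s^{2m+1}w(s)\,ds=0$'' is off by a factor $s^m$ unless you absorb it into the definition of $c_{m,i}$; you already flag this as the delicate point, and indeed once the powers are tracked the argument goes through.
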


\begin{proof}
Changing variables $x = s \xi$ and $y= \eta$ shows 
\begin{align*}
 \la Q_{k,i}^n, Q_{l,j}^m \ra_{W} = & \int_0^1 P_{k, 2n-2k}(s)P_{l, 2m-2l}(s) s^{n-k+m-l+1} w(s)ds \\
    & \times \int_{\CB} Y_{n- k,i}(\xi,\eta) Y_{m- l,j}(\xi,\eta) d\s(\xi,\eta).
\end{align*}
The second integral is zero if $i \ne j$ and $n-k  \ne m-l$, whereas the second integral is zero when
$n-k = m-l$ and $k \ne l$, so that $\la Q_{k,i}^n, Q_{l,j}^m \ra_{W} =0$ if $ i\ne j$, $k\ne l$ and $n \ne m$. 
By definition, $s^{n-k} Y_{n-k,i}\left(\frac{\xi}{s}, \frac{\eta}{s} \right)$ is a polynomial of degree $n-k$ in 
the variable $s$, so that $Q_{k,i}^n$ is a polynomial of degree $n$. To show that the system is complete,
we show that if $\la f, Q_{k,i}^n\ra =0$ for all $k,i,n$, then $f(x,y)=0$. Indeed, by the orthogonality of 
polynomials on the boundary, we see that
\begin{align*}
  f (x,y) = f(s \xi,s \eta) & = \sum_{k=0}^n s^k \sum_{j=0}^k a_{j,k} \xi^j \eta^{k-j} \\
   & =  \sum_{k=0}^n s^k   \sum_{m=0}^k \sum_{i=1}^{\min\{m+1,4\}} b_{m,i}^k Y_{m,i}(\xi,\eta)
\end{align*}
modulo the ideal. Changing order of summation shows that 
\begin{align*}
 f(x,y) = & \sum_{m=0}^n \sum_{i=1}^{\min\{m+1,4\}}  \left(\sum_{k=0}^{n-m} b_{m,i}^{k+m} s^k \right) s^m Y_{m,i}(\xi,\eta)\\
  & = \sum_{m=0}^n \sum_{i=1}^{\min\{m+1,4\}}  \left(\sum_{k=0}^{n-m} c_{m,i,k} P_{k,2m}(x) \right) s^m Y_{m,i}(\xi,\eta).
\end{align*}
This completes the proof. 
\end{proof}

\section{Sampling the associated determinantal point process}\label{sec:detpointprocess}

Associated with an orthonormal basis $q_0(x),\ldots,q_N(x)$ is a determinantal point process, which describes $N$ points $\lambda_1,\ldots,\lambda_N$ distributed according to
$$
	\det \begin{pmatrix} K_N(\lambda_1,\lambda_1) & \cdots & K_N(\lambda_1, \lambda_N) \cr
					\vdots & \ddots & \vdots \cr
					K_N(\lambda_N,\lambda_1) & \cdots & K_N(\lambda_N,\lambda_N)
					\end{pmatrix}
					$$
where 
$$
	K_N(x,y) = \sum_{k=0}^N q_k(x)q_k(y)
$$
is the reproducing kernel,  see \cite{RMTHandbookDetProcess} for an overview of determinantal point processes. 

In the particular case of univariate orthogonal polynomials with respect to a weight $w(x)$, the associated determinantal process is equivalent to  a Coulomb gas---that is, the points are distributed according to 
	$${1 \over Z_N} \prod_{k=1}^N w(x_k) \prod_{k < j} |\lambda_k - \lambda_j|^2 $$
where $Z_N$ is the normalization constant---as well as the eigenvalues of unitary ensembles, see for example \cite{DeiftOrthogonalPolynomials} for the case of an analytic weight on the real line or \cite{JUEUniversality} for the case of a weight supported on $[-1,1]$ with Jacobi-like singularities.

 In the case of our orthogonal polynomials on the wedge, the connection with Coulomb gases and random matrix theory is no longer obvious: the interaction of the points is not Coulomb (that is, it can not be reduced to a Vandermonde determinant squared times a product of weights), nor is there an obvious distribution of random matrices whose eigenvalues are associated with the points\footnote{If there is such a random matrix distribution, one would expect it to be a pair of commuting random matrices, whose joint eigenvalues give points on the wedge.}. We note that there are recent universality results due to Kro\'o and Lubinsky on the asymptotics of Christoffel functions associated with multivariate orthogonal polynomials   \cite{UniversalityBall,UniversalityMultivariateOPs}, but they do not apply in our setting.

\begin{figure}
 \begin{center}
 \includegraphics[width=.7\textwidth]{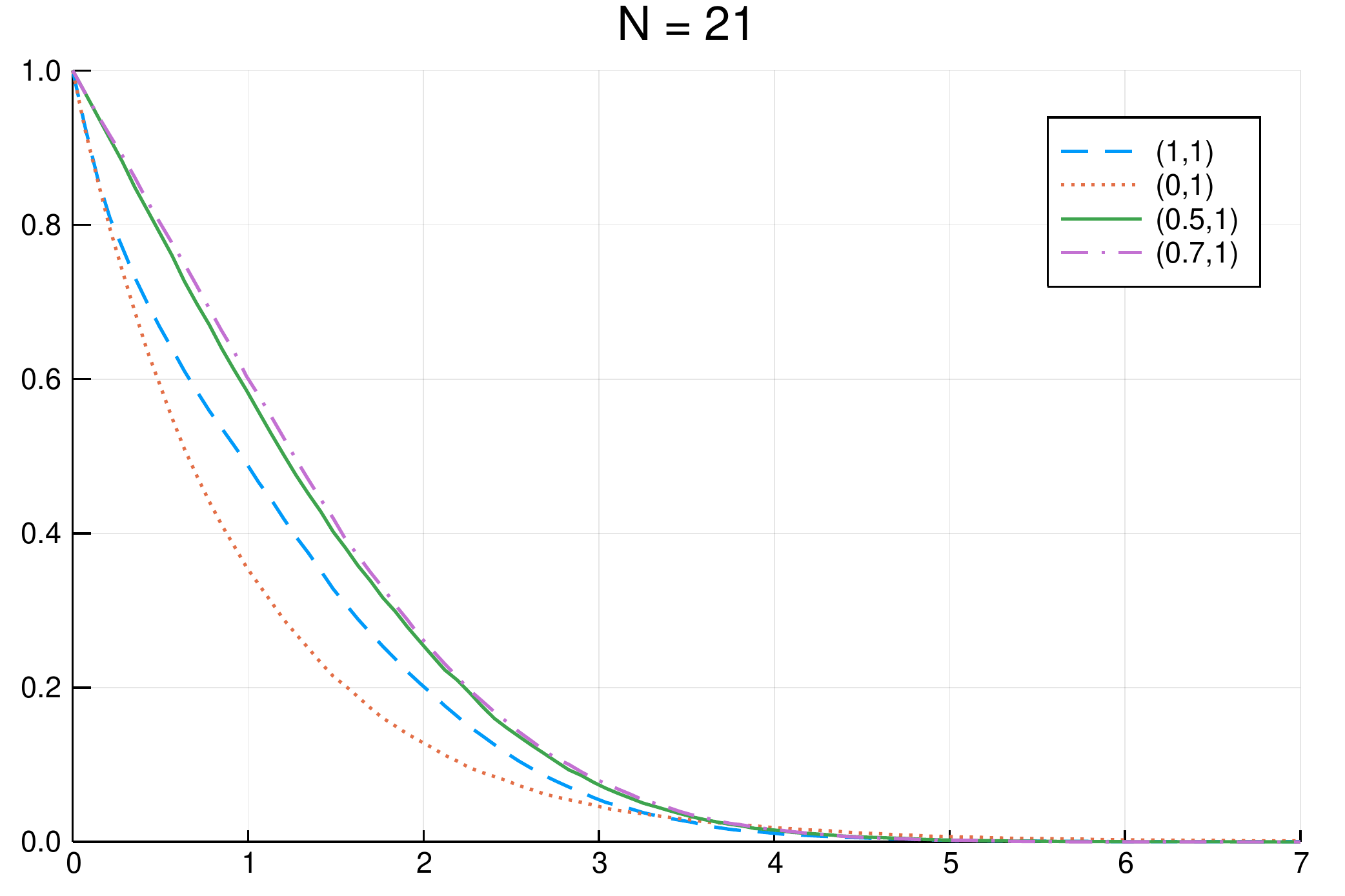}
  \includegraphics[width=.7\textwidth]{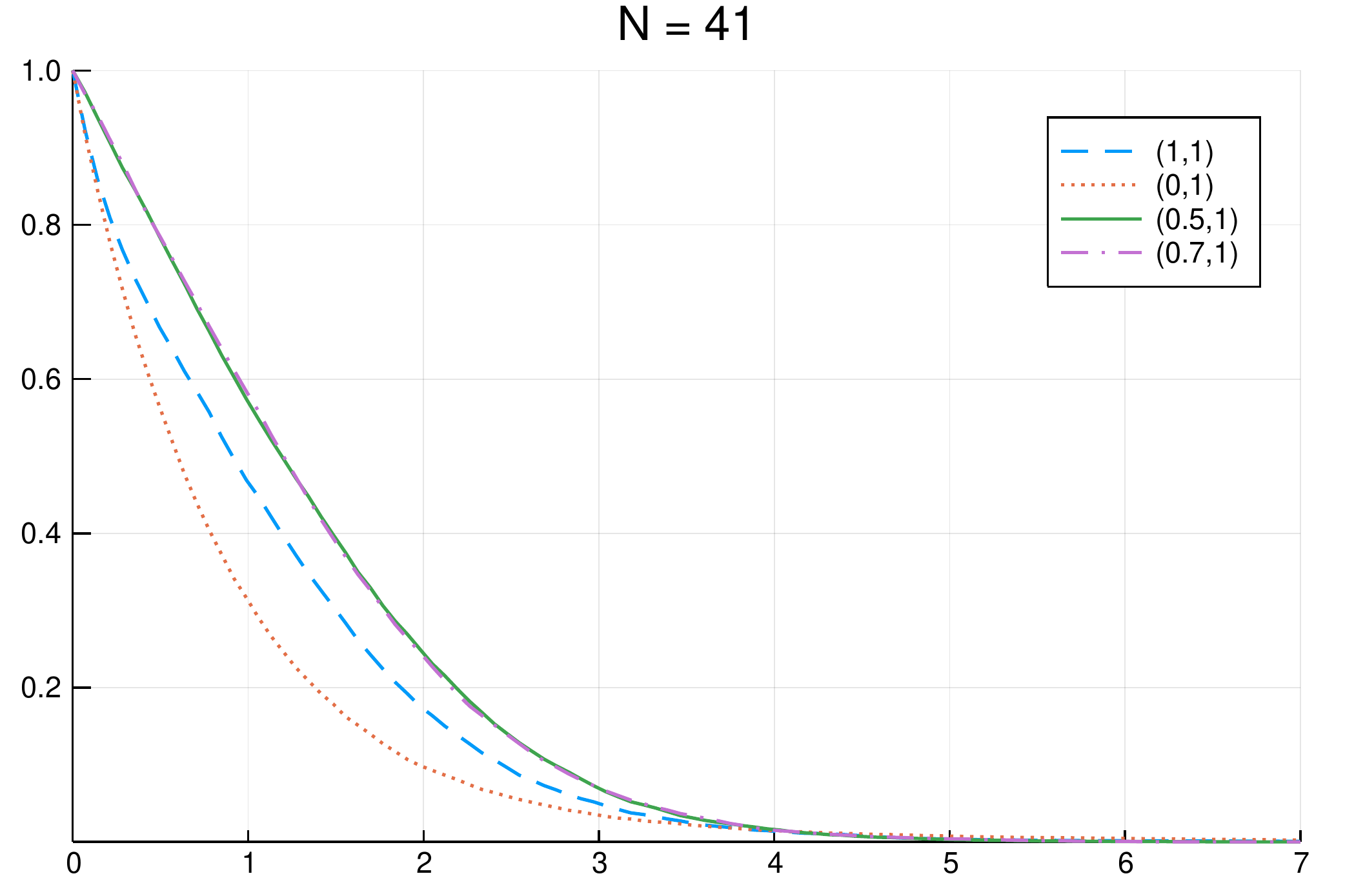}
   \includegraphics[width=.7\textwidth]{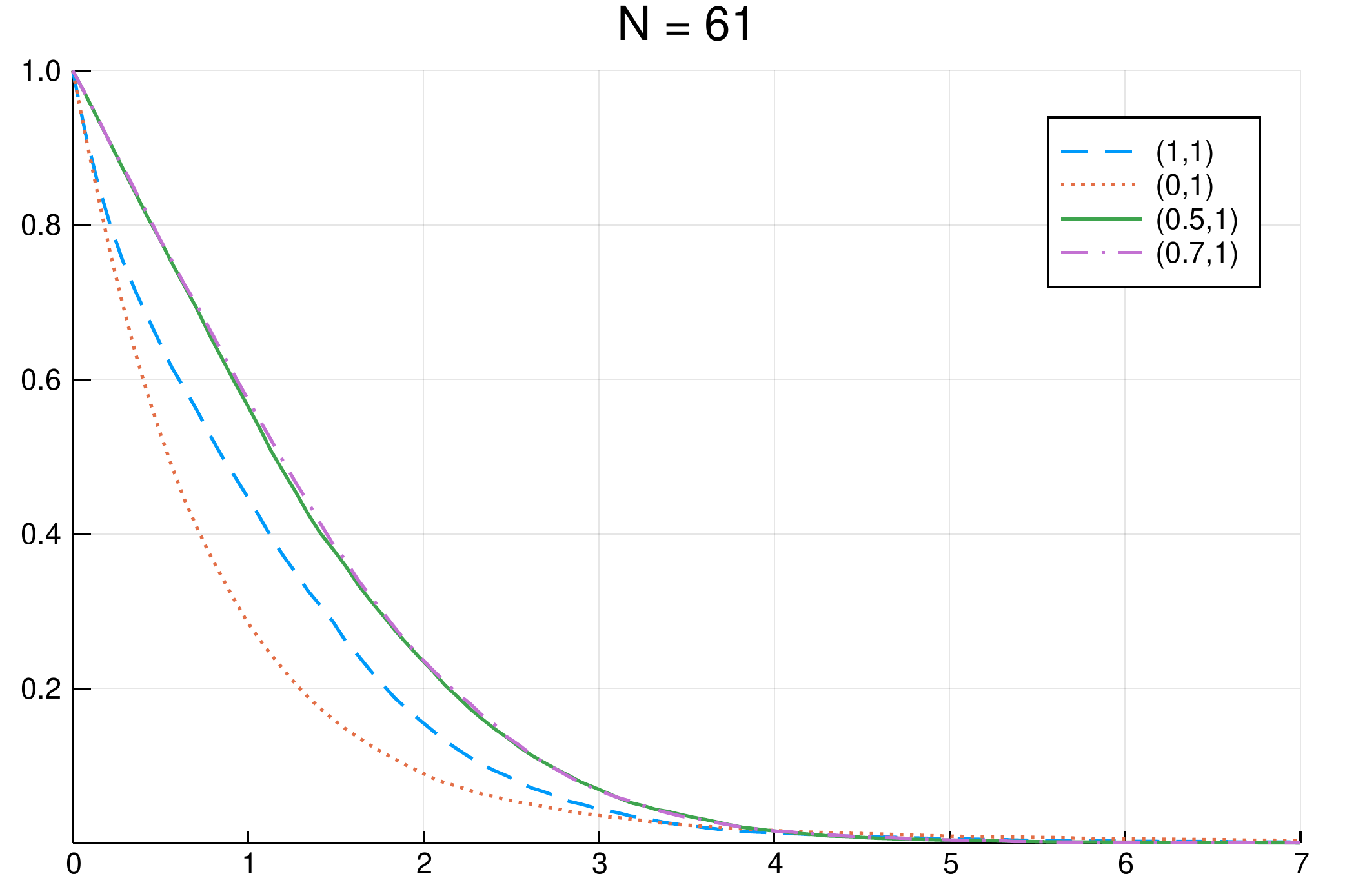}
  \caption{Monte Carlo calculation of the probability that no point satisfying $y = 1$ sampled according to the determinantal point process associated to the wedge orthogonal polynomials with $\alpha = \beta = \gamma = 0$ lies in a neighbourhood of four different points.  $N$ is the total number of basis elements and points.  We have scaled the statistics so that the variance is one, and have used 10,000 samples. }\label{fig:detprocess}
 \end{center}
\end{figure}

\begin{figure}
 \begin{center}
 \includegraphics[width=.7\textwidth]{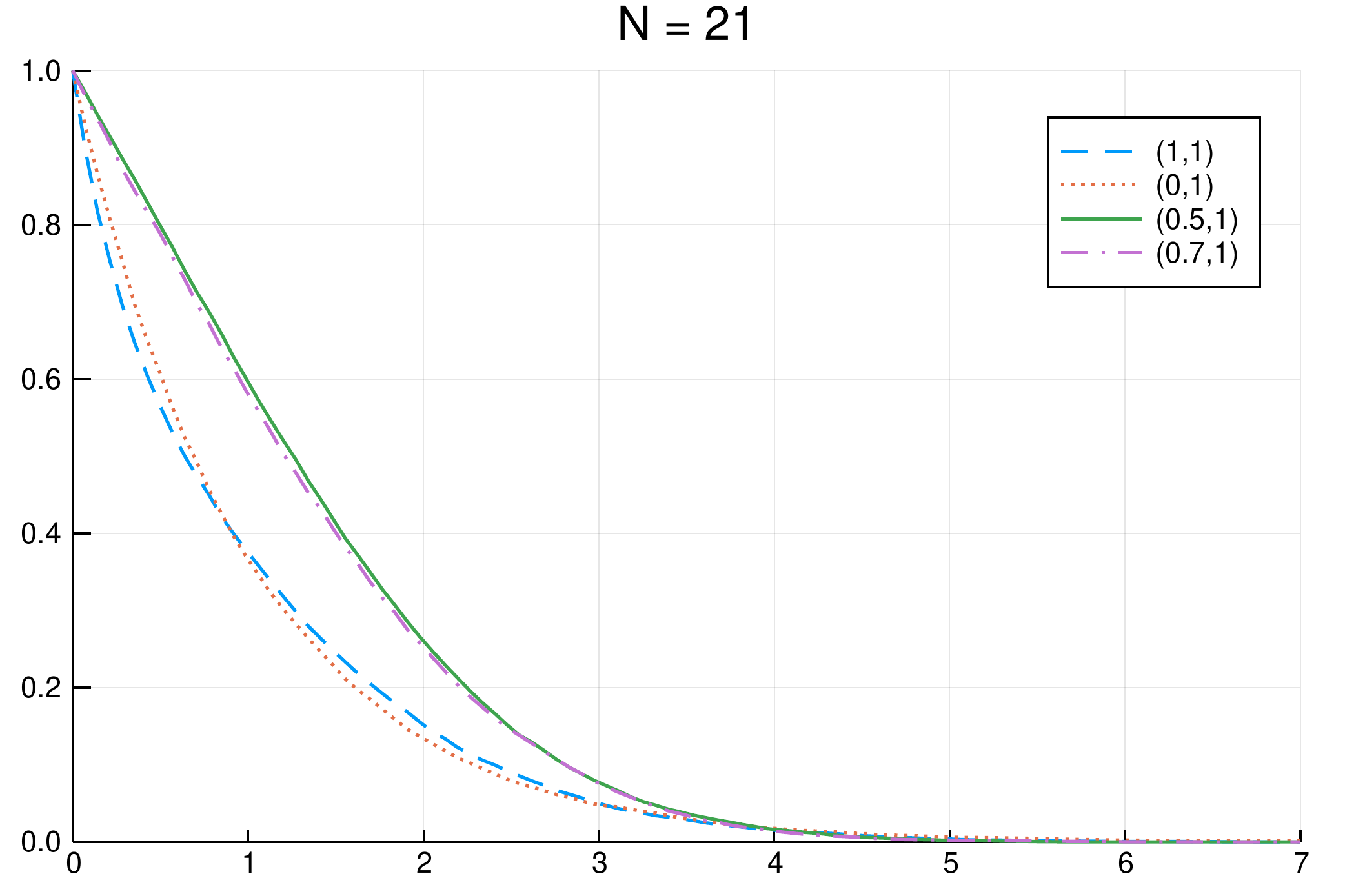}
  \includegraphics[width=.7\textwidth]{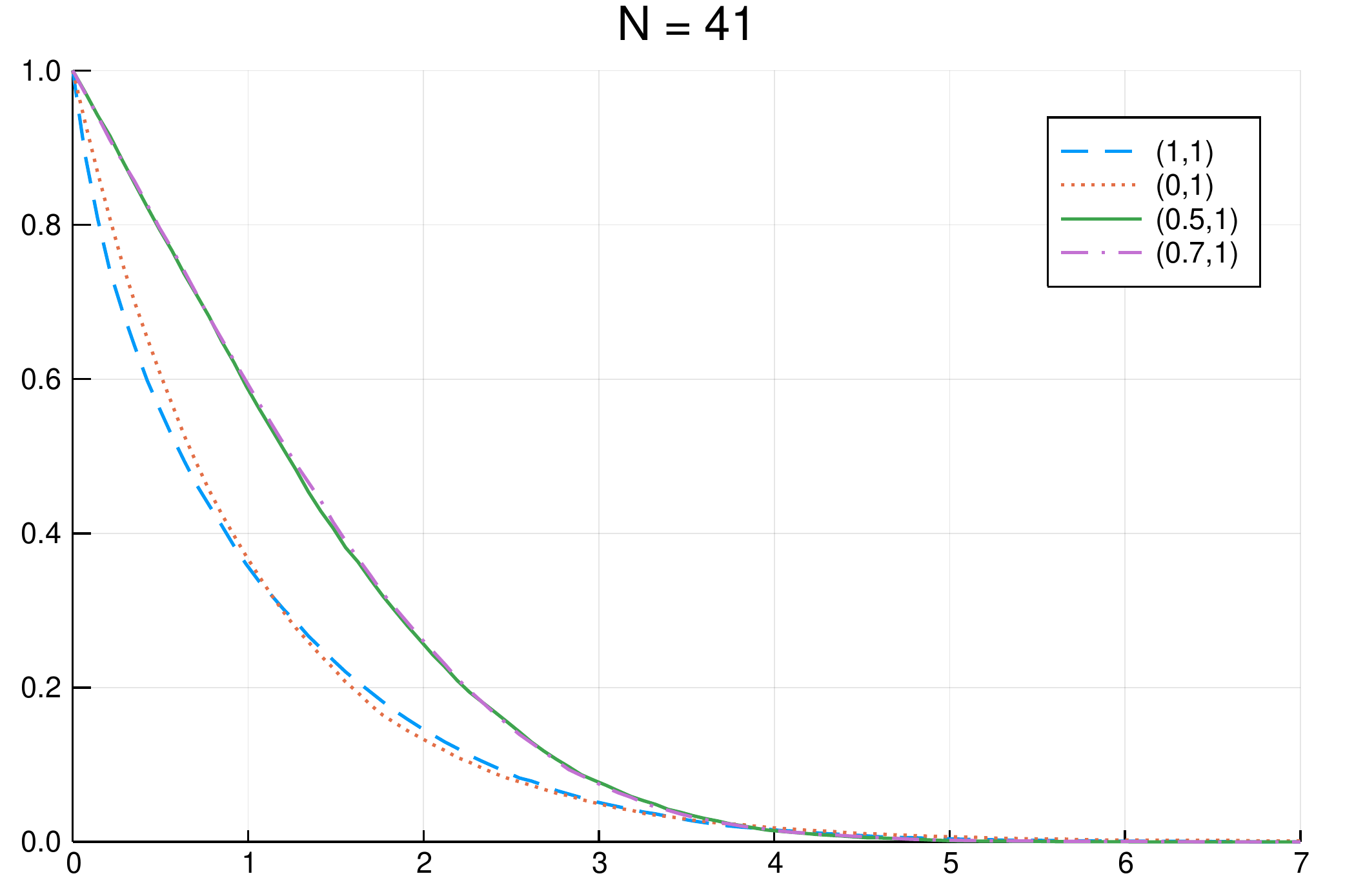}
   \includegraphics[width=.7\textwidth]{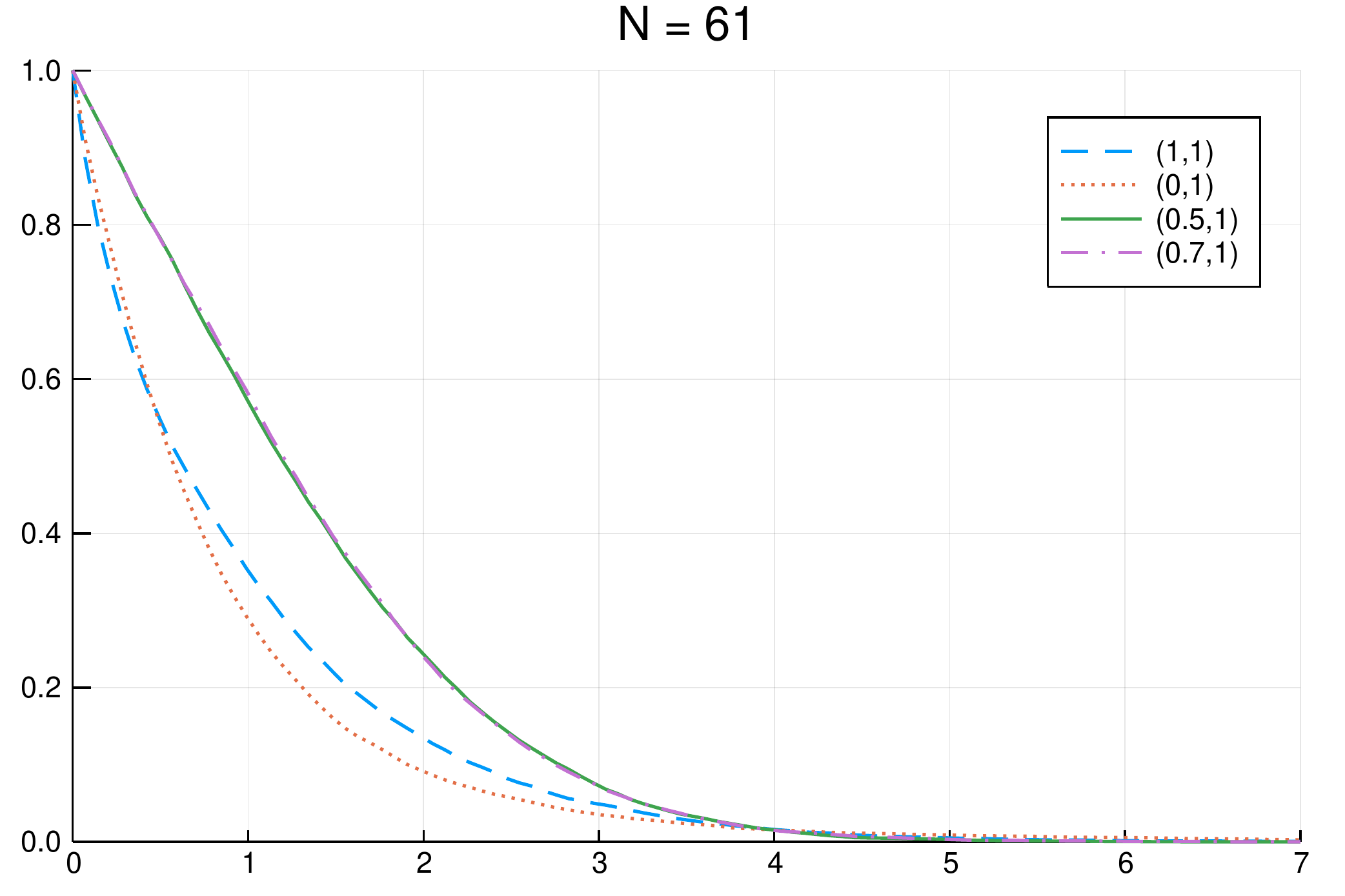}
  \caption{Monte Carlo calculation of the probability that no point satisfying $y = 1$ sampled according to the Coulomb gas on the wedge lies in a neighbourhood of four different points. $N$ is the total number of basis elements and points. We have scaled the statistics so that the variance is one, and have used 10,000 samples. }\label{fig:coulomb}
 \end{center}
\end{figure}

Using the algorithm for sampling determinantal point processes associated with univariate orthogonal polynomials \cite{InvariantEnsembleSampling}, which is trivially adapted to the orthogonal polynomials on the wedge, we can sample from this determinantal point process.  We use this algorithm to calculate statistics of the points. In Figure~\ref{fig:detprocess}, we use the sampling algorithm in a Monte Carlo simulation to approximate the probability that no eigenvalue is present in a neighbourhood of three points for $\alpha = \beta = \gamma = 0$. That is, we take 10,000 samples of a determinantal point process, and calculate the distance of the nearest point to $z_0$, for $z_0$ equal to  $(1,1)$, $(0,1)$, $(0.5,1)$ and $(0.7,1)$.  The plots are  of a complementary empirical cumulative distribution function of these samples.  This gives an estimation of the probability that no eigenvalue is in a neighbourhood of $z_0$.  We have scaled the distributions so that the empirical variance is one: this ensures that the distributions tend to a limit as $N$ becomes large, which is the regime where universality is present. 

\begin{figure}
 \begin{center}
 \includegraphics[width=.7\textwidth]{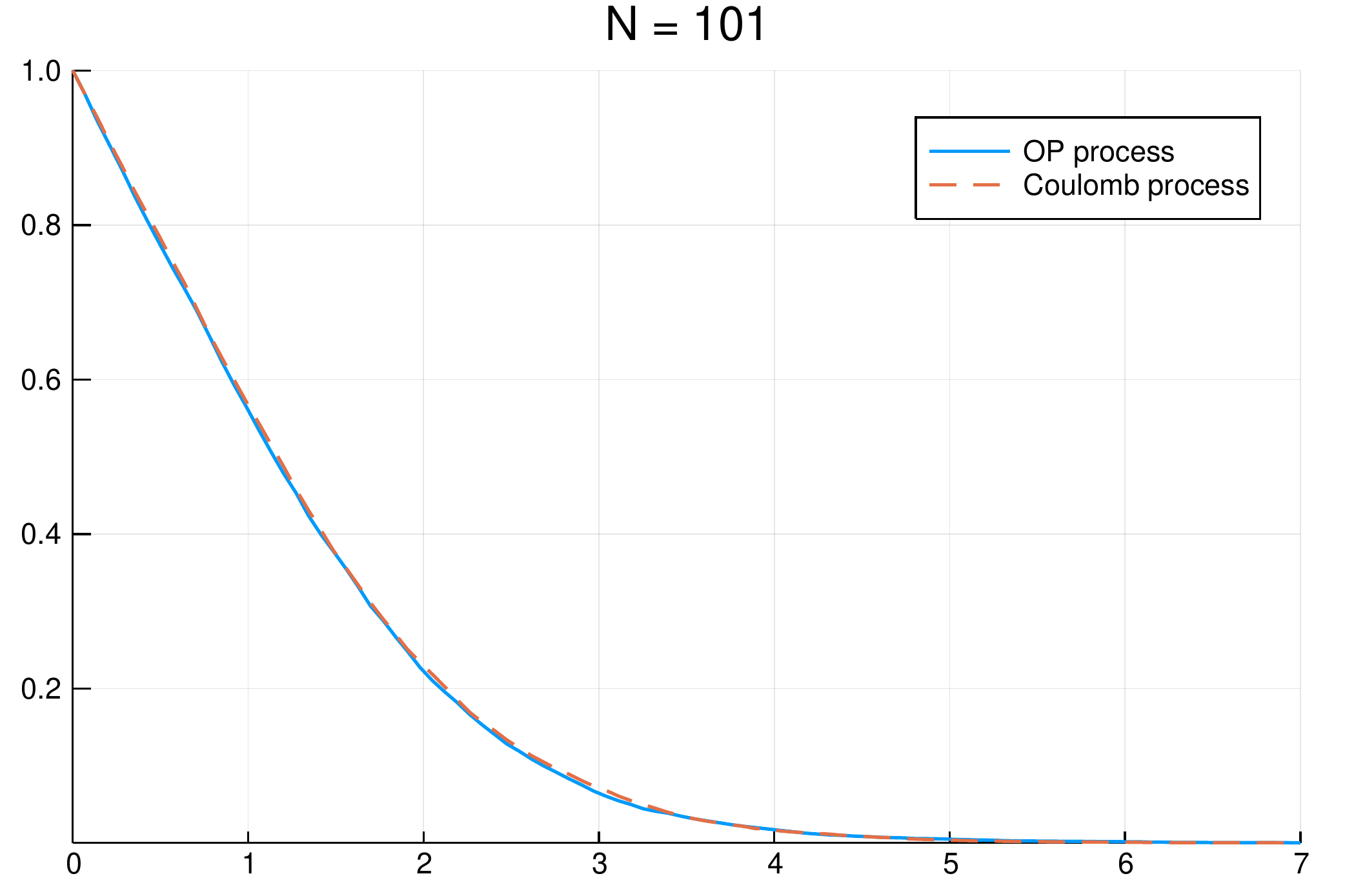}
  \caption{Comparison of the gap probability of the determinantal point process associated to the wedge orthogonal polynomials and Coulomb gas near $(0.5,1)$ for $N = 101$ points. We have scaled the statistics so that the variance is one, and have used 10,000 samples. }\label{fig:opvcoulomb}
 \end{center}
\end{figure}

       In Figure~\ref{fig:coulomb} we plot the same statistics but for samples from the unweighted Coulomb gas on the wedge, which has the distribution 
       	$${1 \over Z_N} \prod_{k < j} \|\lambda_k - \lambda_j\|^2 $$
for 	$\lambda_k$ supported on the wedge.  As this is a Vandermonde determinant squared, it is also a determinantal point process with the basis arising from orthogonalized complex-valued polynomials $1, (x + i y), (x+ i y)^2, \ldots$ \cite{NormalMatrixModel}.   We approximate this orthogonal basis using the modified Gram--Schmidt algorithm with  the wedge inner product calculated via Clenshaw--Curtis quadrature. Again, this fits naturally into the sampling algorithm of \cite{InvariantEnsembleSampling}, hence we can produce samples of this point process.  What we observe is that, while our determinantal point process is not a Coulomb gas, it appears to be  in the same universality class as the Coulomb gas away from the edge and corner, as the statistics follow the same distribution.  This universality class matches that of the Gaussian Unitary Ensemble, as  seen in Figure~\ref{fig:opvcoulomb} where we compare the three for $N = 50$.

\section{Conclusion}

We have introduced multivariate orthogonal polynomials on the wedge and boundary of a square for some natural choices of weights. We have also generated a complete orthogonal  basis with respect to a suitable weight inside the square.  We have looked at determinantal point process statistics and observed a relationship between the resulting statistics and Coulomb gases, suggesting that, away from the corner and edge, they are in the same universality class.

One of the motivations for this work is to solve singular integral equations and evaluate their solutions on contours that have corners, in other words, to generalized the approach of \cite{RMSSOSIEs}. Preliminary work in this direction is included in Appendix~\ref{sec:Stieltjes}, which shows how the recurrence relationship that our polynomials satisfy can be used to evaluate Stieltjes transforms.

\appendix
\section{Jacobi operators}\label{sec:JacobiOperators}

By necessity, multivariate orthogonal polynomials have block-tridiagonal Jacobi operators corresponding to multiplication by $x$ and $y$.  
  We include here the recurrences associated with the inner product $  \la f,g\ra_{\a,\a,\g}$ (that is, $\b = \a$) that encode the Jacobi operators as they have a particularly simple form.   The following lemma gives a linear combination of our orthogonal polynomials that vanish on $x=1$:

\begin{prop}\label{prop:vanish}
For $\beta = \alpha$, we have
$$ (\alpha+\gamma+2) Q_1(x,y)- P_1(x,y) +(1+\alpha) P_0(x,y) = 2 (\alpha+\gamma+2)(1-x)$$
and for $n=1,2,\ldots,$
\begin{align*}
(n + \gamma + \alpha + 2) & Q_{n + 1}(x,y) - (n + 1) P_{n + 1}(x,y) - (n + \gamma) Q_n(x, y) + (n + a + 1)  P_n( x, y) \cr
&=
 2 (1 - x) (2 n + \gamma + \alpha + 2) P_n^{(\gamma + 1, \alpha)}(2 x - 1)
\end{align*}
\end{prop}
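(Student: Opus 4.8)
The idea is to substitute the explicit expressions of Theorem~\ref{thm:OP-ipd-abg} (with $\b=\a$) and reduce the two-variable identity to one-variable identities for shifted Jacobi polynomials. Abbreviate $p_k(x):=P_k^{(\g,\a)}(2x-1)$, $q_k(x):=P_k^{(\g+2,\a)}(2x-1)$, $r_k(x):=P_k^{(\g+1,\a)}(2x-1)$, and recall that $P_k(x,y)=p_k(x)+p_k(y)-p_k(1)$ and $Q_k(x,y)=d_k\bigl[(1-x)q_{k-1}(x)-(1-y)q_{k-1}(y)\bigr]$ with $d_k=(\g+\a+2)_k/(\a+1)_{k-1}$. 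Hence the left-hand side of the asserted identity has the form $\Phi(x)+\Psi(y)+(\mathrm{const})$ for univariate polynomials $\Phi,\Psi$, while the right-hand side depends only on $x$ and vanishes at $x=1$. Since $\Phi$ and $\Psi$ are univariate, it suffices to verify the identity after restricting separately to the two segments of $\Omega$: on $\{x=1\}$ the combination must vanish identically in $y$, and on $\{y=1\}$ it must reduce to the claimed one-variable identity in $x$. On $\{y=1\}$ one has $P_k(x,1)=p_k(x)$ and $Q_k(x,1)=d_k(1-x)q_{k-1}(x)$, while on $\{x=1\}$ one has $P_k(1,y)=p_k(y)$ and $Q_k(1,y)=-d_k(1-y)q_{k-1}(y)$, so both restrictions are completely explicit.

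On the segment $\{x=1\}$ the main tool is the expansion \eqref{eq:Jacobi-A} proved earlier, which writes $(1-y)q_{m-1}(y)$ as an explicit combination of $p_0(y),\dots,p_m(y)$ whose dependence on the summation index is \emph{separated} from its dependence on $\a$. Substituting it with $m=n$ and $m=n+1$ turns the combination into $\sum_{k=0}^{n+1}\lambda_k\,p_k(y)$; the Pochhammer prefactors collapse exactly as in the proof of the Lemma preceding Theorem~\ref{thm:OP-ipd-abg}, forcing $\lambda_0=\dots=\lambda_{n-1}=0$, after which $\lambda_n=\lambda_{n+1}=0$ is a one-line scalar check. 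On the segment $\{y=1\}$ I would first rewrite the target $2(1-x)(2n+\g+\a+2)P_n^{(\g+1,\a)}(2x-1)$ via the relation \cite[(4.5.4)]{Szego} as $2(n+\g+1)p_n(x)-2(n+1)p_{n+1}(x)$, and expand each term $(1-x)q_{k-1}(x)$ on the left by \eqref{eq:Jacobi-A}; comparing coefficients in the basis $\{p_k\}$ reduces to a finite scalar identity in $\a,\g,n$. (Alternatively, expand both sides in the basis $\{r_k\}$, using \cite[(4.5.4)]{Szego} together with the contiguous relation $(2m+\g+\a+1)P_m^{(\g,\a)}(2x-1)=(m+\g+\a+1)P_m^{(\g+1,\a)}(2x-1)-(m+\a)P_{m-1}^{(\g+1,\a)}(2x-1)$, and collapse the $r_{n\pm1}$ contributions against $r_n$ by the three-term recurrence.) The first displayed identity is the degenerate case $n=0$ of the second, with the convention $Q_0\equiv0$; it can also be verified directly from $P_0=1$, $P_1(x,y)=(\g+\a+2)(x+y)-(2\a+\g+3)$ and $Q_1(x,y)=(\g+\a+2)(y-x)$ by comparing affine functions of $x$ and $y$.

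The computation is routine but heavy in bookkeeping. The one genuinely nontrivial point is the cancellation of the low-order Jacobi coefficients on $\{x=1\}$ --- that after inserting \eqref{eq:Jacobi-A} only the two top degrees in $y$ survive --- which is precisely where the separation of $n$ and $\a$ in \eqref{eq:Jacobi-A} (highlighted in the remark following that Lemma) is used; the remaining steps are elementary manipulations of Jacobi contiguous relations and Pochhammer symbols.
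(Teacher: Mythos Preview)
The paper states this proposition without proof, so there is no argument of the authors' to compare against. Your overall strategy --- write the left-hand side as $\Phi(x)+\Psi(y)+\text{const}$, restrict separately to $\{x=1\}$ and $\{y=1\}$, and reduce everything to the contiguous Jacobi relations and the expansion \eqref{eq:Jacobi-A} --- is the natural approach and would succeed.

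There is, however, a genuine error in your setup: you are using the wrong $Q_n$. You take $Q_k(x,y)=d_k\bigl[(1-x)q_{k-1}(x)-(1-y)q_{k-1}(y)\bigr]$ with $d_k=(\g+\a+2)_k/(\a+1)_{k-1}$, i.e.\ the polynomials of Theorem~\ref{thm:OP-ipd-abg}. But Appendix~\ref{sec:JacobiOperators} uses the $Q_k$ of Theorem~\ref{thm:ipd-w-op} (with $w=w_{\a,\g}$), namely $Q_k(x,1)=(1-x)P_{k-1}^{(\g+2,\a)}(2x-1)$ with \emph{no} prefactor. This is forced by the proof of the proposition immediately following, which obtains $(2n+\g+\a+1)(1-x)Q_n(x,1)=-n(1-x)r_n(x)+(n+\g+1)(1-x)r_{n-1}(x)$ directly from \cite[(18.9.6)]{DLMF}; that relation holds only for $Q_n(x,1)=(1-x)q_{n-1}(x)$. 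With your $Q_1(x,y)=(\g+\a+2)(y-x)$ the $y$-coefficient of $(\a+\g+2)Q_1-P_1$ equals $(\a+\g+2)(\a+\g+1)\ne 0$, so your ``direct verification'' of the first identity cannot possibly go through; you must drop the factor $d_k$.

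A secondary warning: once you use the correct $Q_k$ and carry out your scheme, you will discover that the printed coefficients contain typos --- the interior coefficients should read $-(n+\a)$ and $(n+\g+1)$ rather than $-(n+\g)$ and $(n+a+1)$, and likewise $(1+\g)$ rather than $(1+\a)$ in the first display. Your method, executed carefully, will detect and correct this rather than confirm the identity verbatim.
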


\begin{prop}
Assume $(1-x)(1-y) = 0$. Then
\begin{align*}
(1-x) P_0(x,y) & = {1 \over 2} Q_1(x,y) - {1 \over 2 (2 + \gamma+\alpha) } P_1(x,y) + {1+\gamma \over 2 (2+\gamma+\alpha)} P_0(x,y) \cr
(1-x)P_1(x,y) &= {\gamma + \alpha + 2 \over 2  (4 + \gamma + \alpha)} Q_2( x, y) 
 -
 {\gamma + \alpha + 2 \over (3 + \gamma + \alpha) (4 + \gamma + \alpha)} P_2( x, y) \cr
 &- {1+\alpha \over 
4 + \gamma + \alpha} Q_1( x, y) 
   + {4 + 3 \alpha + \gamma (3 + \gamma + \alpha)\over2 (2 + \gamma + \alpha) (4 + \gamma + \alpha)}
   P_1( x, y) \cr
&   -{(1 + \gamma) (1 + \alpha) \over 2 (2 + \gamma + \alpha) (3 + \gamma + \alpha)}
   P_0( x, y) \cr
(1-x) Q_1(x,y) = & -{1 \over 2 (4 + \gamma + \alpha)} Q_2( x, y) 
 + 
 {1\over (3 + \gamma + \alpha) (4 + \gamma + \alpha)} P_2( x, y) \cr
 &+ {(3 + \gamma) \over 
  2 (4 + \gamma + \alpha)} Q_1( x, y) 
   - {(2 + \gamma) \over (2 + \gamma + \alpha) (4 + \gamma + \alpha)}
   P_1( x, y) \cr
&   + {(1 + \gamma) (2 + \gamma) \over 2 (2 + \gamma + \alpha) (3 + \gamma + \alpha)}
   P_0( x, y)
\end{align*}
and, for $n=2,3,\ldots$,
\begin{align*}
(1 - x) P_n(x,y) = & {(1 + \gamma + \alpha + n) (n + \gamma + \alpha + 2) \over 
  2 (1 + \gamma + \alpha + 2 n) (2 + \gamma + \alpha + 2 n)}
   			Q_{n+1}(x,y) \cr
			&- {(1 + \gamma + \alpha + n) (n + 1)\over 
  2 (1 + \gamma + \alpha + 2 n) (2 + \gamma + \alpha + 2 n)}
   P_{n+1}(x,y) \cr
   & - { (\alpha + n) (1 + \gamma + \alpha + n) (1 + \gamma + \alpha + 2 n) \over (1 + \gamma + \alpha + 
     2 n) (2 + \gamma + \alpha + 2 n) (\gamma + \alpha + 2 n)}
   Q_n(x,y) \cr
   & + {(1 + \gamma) (\gamma + \alpha) + 2 (1 + \gamma + \alpha) n + 2 n^2 \over 
  2  (2 + \gamma + \alpha + 2 n) (\gamma + \alpha + 2 n)}
   P_n(x,y) \cr
   &+ {(n + \alpha) (n + \alpha - 1) \over
  2 (1 + \gamma + \alpha + 2 n) (2 n + \gamma + \alpha)}
   Q_{n-1}(x,y) \cr
   &- {(n + \alpha) (n + \gamma) \over
  2 (1 + \gamma + \alpha + 2 n) (2 n + \gamma + \alpha)}  P_{n-1}(x,y) \cr
  (1 - x) Q_n(x,y) = &- {n (2 + \gamma + \alpha + n) \over 2 (2 + \gamma + \alpha + 2 n) (2 n + \gamma + \alpha + 1)}
   			Q_{n+1}(x,y) \cr
			&+ {n (1 + n) \over 2 (2 + \gamma + \alpha + 2 n) (2 n + \gamma + \alpha + 1)}
   P_{n+1}(x,y) \cr
   & + {(1 + \gamma) (2 + \gamma + \alpha) + 2 (1 + \gamma + \alpha) n + 
  2 n^2 \over 2 (\gamma + \alpha + 2 n) (2 + \gamma + \alpha + 2 n)}
   Q_n(x,y) \cr
   & - {n (1 + \gamma + n) \over (\gamma + \alpha + 2 n) (2 + \gamma + \alpha + 2 n)}
   P_n(x,y) \cr
   &- {(1 + \gamma + n) (\alpha + n - 1)\over 2 (\gamma + \alpha + 2 n) (2 n + \gamma + \alpha + 1)}
   Q_{n-1}(x,y) \cr
   & {(1 + \gamma + n) (\gamma + n)\over  2 (\gamma + \alpha + 2 n) (2 n + \gamma + \alpha + 1)}  P_{n-1}(x,y)
  \end{align*}
\end{prop}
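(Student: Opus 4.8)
The plan is to work modulo the ideal $I=\la(1-x)(1-y)\ra$, to reduce each identity to a single‑variable Jacobi identity, and then to translate it back into a combination of the $P_k,Q_k$ by means of Proposition~\ref{prop:vanish}. First, since $P_n^{(\g,\a)}(2y-1)-\binom{n+\g}{n}$ and $(1-y)P_{n-1}^{(\g+2,\a)}(2y-1)$ vanish at $y=1$, comparison with \eqref{eq:op-P}--\eqref{eq:op-Q} (taken with $\b=\a$) shows that $(1-x)P_n\equiv(1-x)P_n^{(\g,\a)}(2x-1)$ and $(1-x)Q_n\equiv c_n\,(1-x)^2P_{n-1}^{(\g+2,\a)}(2x-1)$ modulo $I$, where $c_n$ is the constant multiplying the $x$‑part of $Q_n$. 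Hence it suffices to expand these two polynomials in $x$ in the basis $\{P_k,Q_k\}$ of $\RR[x,y]/I$. The key tool is that Proposition~\ref{prop:vanish}, rearranged, identifies $(1-x)P_k^{(\g+1,\a)}(2x-1)$ with the explicit linear combination $\tfrac1{2(2k+\g+\a+2)}\bigl[(k+\g+\a+2)Q_{k+1}-(k+1)P_{k+1}-(k+\g)Q_k+(k+\a+1)P_k\bigr]$ in $\RR[x,y]/I$ for $k\ge1$, with its own low‑index version for $k=0$.

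For $(1-x)P_n$ I would then write $P_n^{(\g,\a)}(2x-1)$ as a two‑term combination of $P_n^{(\g+1,\a)}(2x-1)$ and $P_{n-1}^{(\g+1,\a)}(2x-1)$ by the standard Jacobi contiguous relation, multiply through by $(1-x)$, and substitute the combination above for each $(1-x)P_k^{(\g+1,\a)}(2x-1)$. For $(1-x)Q_n$ I would instead start from the identity $(2n+\g+\a+1)(1-x)P_{n-1}^{(\g+2,\a)}(2x-1)=(n+\g+1)P_{n-1}^{(\g+1,\a)}(2x-1)-nP_n^{(\g+1,\a)}(2x-1)$ \cite[(4.5.4)]{Szego} (the same one used to prove the lemma in \secref{Section:Wedge}), multiply by $(1-x)$, and again apply Proposition~\ref{prop:vanish} to the two resulting terms. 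In each case one is left, modulo $I$, with a combination of $P_{n+1},Q_{n+1},P_n,Q_n,P_{n-1},Q_{n-1}$; expanding the displayed numerators, absorbing the constants $c_k$, and simplifying the rational coefficients in $n,\a,\g$ should give the asserted formulas. The cases $n=0,1,2$ must be done separately, since then $P_{n-1}$, $Q_{n-1}$ or $Q_n$ is absent and Proposition~\ref{prop:vanish} carries its own low‑index statements; these produce the three displayed special identities after a short direct computation with $P_0,P_1,P_2,Q_1,Q_2$.

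I expect the main obstacle to be purely the bookkeeping in the final step: chaining the several three‑term relations and the constants $c_k$ without error and verifying that the resulting rational functions collapse to the stated closed forms. An equally routine alternative, which avoids Proposition~\ref{prop:vanish}, is to compute the Fourier coefficients $\la(1-x)P_n,P_k\ra_{\a,\a,\g}$, $\la(1-x)P_n,Q_k\ra_{\a,\a,\g}$, $\la(1-x)Q_n,P_k\ra_{\a,\a,\g}$, $\la(1-x)Q_n,Q_k\ra_{\a,\a,\g}$ directly; each splits into two integrals over $[0,1]$ of a product of Jacobi polynomials with an extra factor $(1-x)$ or $(1-x)^2$, which the same contiguous relations and \eqref{eq:Jacobi-A} evaluate, after which one divides by the norms \eqref{eq:PnQnorm}, \eqref{eq:ipdPQ}. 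In both approaches the substantive content is Proposition~\ref{prop:vanish} together with two standard Jacobi identities, and the rest is algebra.
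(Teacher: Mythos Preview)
Your primary approach is correct and is exactly the paper's: restrict to $y=1$, use the Jacobi contiguous relations (DLMF (18.9.5) for $P_n$ and (18.9.6)/Szeg\H{o} (4.5.4) for $Q_n$) to reduce $(1-x)P_n$ and $(1-x)Q_n$ to combinations of $(1-x)P_k^{(\g+1,\a)}(2x-1)$, and then invoke Proposition~\ref{prop:vanish} to convert each of those into a four-term combination of $P_k,Q_k,P_{k+1},Q_{k+1}$. Note only that in the normalization actually used in Appendix~\ref{sec:JacobiOperators} (that of Theorem~\ref{thm:ipd-w-op} with $w=w_{\a,\g}$) your constant $c_n$ equals $1$, so no extra bookkeeping is needed there.
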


\begin{proof}
The first equation  follows from Proposition~\ref{prop:vanish},  since, for $y = 1$, we have (using \cite[(18.9.5)]{DLMF} to increment the first parameter)
	\begin{align*}
	(2n+\gamma + \alpha + 1)(1 - x) P_n(x,y) = &(n + \gamma + \alpha + 1) (1 - x) P_n^{(\gamma + 1, \alpha)}(      2 x - 1) \cr
	&- (n + \alpha) (1-x)P_{n - 1}^{( \gamma+ 1, \alpha)}( 2 x - 1)
	\end{align*}

The second equation  also follows from Proposition~\ref{prop:vanish},  since, for $y = 1$, we have (using \cite[(18.9.6)]{DLMF} to decrement the first parameter)
	\begin{align*}
	(2n+\gamma + \alpha + 1)(1 - x) Q_n(x,y) = & -n (1 - x) P_n^{(\gamma + 1, \alpha)}(      2 x - 1) \cr
	&+ (n + \gamma + 1) (1-x)P_{n - 1}^{( \gamma+ 1, \alpha)}( 2 x - 1).
	\end{align*}
\end{proof}

The recurrences for  multiplication by $1-y$ follow from the symmetries $P_n(x,y) = P_n(y,x)$ and $Q_n(x,y) = -Q_n(y,x)$.

%
%

\section{Stieltjes transform  of orthogonal polynomials}\label{sec:Stieltjes}

Consider the Stieltjes transform
$${\cal S}_\Omega f(z) = \int_\Omega {f(x,y) \over z - (x + i y)} \ds,$$
where $\ds$ is the arc-length differential.  
Just as in one-dimensions, the Stieltjes transform of weighted multivariate orthogonal polynomials satisfies the same recurrence as the orthogonal polynomials themselves

\begin{prop}
Suppose ${\mathbb P}_n$ are a family of orthogonal polynomials with respect to $w(x,y)$. Then,
for $n = 1, 2, \ldots$,
$$ 
	z {\cal S}_\Omega[{\mathbb P}_n w](z) = {\cal S}_\Omega[\zeta {\mathbb P}_n w](z)
	$$	
In particular, if ${\mathbb P}_n$ satisfies the recurrence relationships
\begin{align*}
	x {\mathbb P}_n &= C_{n}^x	{\mathbb P}_{n-1} + A_{n}^x	{\mathbb P}_n + B_{n}^x	{\mathbb P}_{n+1} \\
	y {\mathbb P}_n &= C_{n}^y	{\mathbb P}_{n-1} + A_{n}^y	{\mathbb P}_n + B_{n}^y	{\mathbb P}_{n+1} 
	\end{align*}	
	then for $A_n^z = A_{n}^x +\I A_{n}^y$, $B_n^z = B_{n}^x +\I B_{n}^y$ and $C_n^z = C_{n}^x +\I C_{n}^y$ we have
\begin{align*}
	z {\cal S}_\Omega[{\mathbb P}_n w](z)&= C_n^z 	{\cal S}_\Omega[{\mathbb P}_{n-1} w](z)+A_n^z	{\cal S}_\Omega[{\mathbb P}_n w](z) +B_{n}^z 	{\cal S}_\Omega[{\mathbb P}_{n+1} w](z)
	\end{align*}		
\end{prop}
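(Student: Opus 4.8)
The plan is to prove the first displayed identity $z\,{\cal S}_\Omega[{\mathbb P}_n w](z) = {\cal S}_\Omega[\zeta {\mathbb P}_n w](z)$ (with $\zeta = x + \I y$) by a one-line partial-fraction trick together with orthogonality, and then to obtain the three-term recurrence for the Stieltjes transforms by a purely formal manipulation of the given recurrences.

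First I would start from the definition and use, for $z$ off the contour $\Omega$, the elementary identity
$$\frac{z}{z-\zeta} = 1 + \frac{\zeta}{z-\zeta}.$$
Multiplying by ${\mathbb P}_n(x,y)\,w(x,y)$ and integrating against the arc-length differential over $\Omega$ yields
$$z\,{\cal S}_\Omega[{\mathbb P}_n w](z) = \int_\Omega {\mathbb P}_n(x,y)\, w(x,y)\,\ds + {\cal S}_\Omega[\zeta {\mathbb P}_n w](z).$$
The crucial observation is that the first term on the right is precisely $\la {\mathbb P}_n, 1\ra$ — the wedge inner product of ${\mathbb P}_n$ against the constant polynomial $1$, since the arc-length measure weighted by $w$ on the two edges of $\Omega$ reproduces the bilinear form of \eqref{eq:ipd-w}. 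Hence it vanishes for $n\ge 1$ by orthogonality, which is exactly why the statement excludes $n=0$.

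Next, for the recurrence, I would combine the two scalar three-term recurrences: multiplying the $y$-recurrence by $\I$ and adding it to the $x$-recurrence gives, with $A_n^z = A_n^x + \I A_n^y$, $B_n^z = B_n^x + \I B_n^y$, $C_n^z = C_n^x + \I C_n^y$,
$$\zeta\, {\mathbb P}_n = C_n^z {\mathbb P}_{n-1} + A_n^z {\mathbb P}_n + B_n^z {\mathbb P}_{n+1}.$$
Multiplying by $w$ and applying the complex-linear map ${\cal S}_\Omega[\,\cdot\,](z)$ term by term produces
$${\cal S}_\Omega[\zeta {\mathbb P}_n w](z) = C_n^z {\cal S}_\Omega[{\mathbb P}_{n-1} w](z) + A_n^z {\cal S}_\Omega[{\mathbb P}_n w](z) + B_n^z {\cal S}_\Omega[{\mathbb P}_{n+1} w](z),$$
and substituting the first identity for the left-hand side gives the claimed relation.

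I do not expect a real obstacle here: the Stieltjes transform is an absolutely convergent integral over the compact set $\Omega$ for $z\notin\Omega$, so interchanging finite linear combinations with the integral needs no justification beyond that. The only point deserving a sentence of care is recognizing the "constant" term $\int_\Omega {\mathbb P}_n w\,\ds$ as $\la {\mathbb P}_n, 1\ra_w$ and invoking orthogonality; everything else is bookkeeping with the complexified recurrence coefficients.
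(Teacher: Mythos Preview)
Your proof is correct and follows essentially the same argument as the paper: write $z = (z-\zeta)+\zeta$ in the numerator of the Stieltjes integral to split off the constant term $\int_\Omega {\mathbb P}_n w\,\ds = \la {\mathbb P}_n,1\ra$, which vanishes for $n\ge 1$ by orthogonality. Your derivation of the recurrence by combining the $x$- and $y$-recurrences and using linearity of ${\cal S}_\Omega$ is more explicit than the paper's, which leaves that step to the reader.
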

\begin{proof}
We will identify ${\mathbb R}^2$ and ${\mathbb C}$ and use the notation $\zeta = x + i y$.
Note that
$$	
	z \int_\Omega { f(\zeta) \over z-\zeta} \ds = 
	 \int_\Omega {(z - \zeta)  f(\zeta) \over z - \zeta} \ds +  \int_\Omega {\zeta f(\zeta ) \over z - \zeta} \ds = 
	 \int_\Omega f(\zeta )  \ds+  {\cal S}_\Omega[\zeta f](z)
	 $$
	The first integral is zero if $f$ is orthogonal to $1$. 
\end{proof}

While this holds true for all families of multivariate orthogonal polynomials, in general, satisfying a single recurrence is not sufficient to determine ${\cal S}_\Omega[{\mathbb P}_n w](z)$. However, since our blocks are square, in our case it is:
\begin{cor}
	If $B_n^z = B_{n}^x +\I B_{n}^y$ is invertible, then 
	\begin{align*}
 	{\cal S}_\Omega[{\mathbb P}_{n+1} w](z)&= z (B_{n}^z)^{-1} {\cal S}_\Omega[{\mathbb P}_n w](z) - 	(B_{n}^z)^{-1} C_n^z 	{\cal S}_\Omega[{\mathbb P}_{n-1} w](z)- 	(B_{n}^z)^{-1} A_n^z	{\cal S}_\Omega[{\mathbb P}_n w](z)
	\end{align*}		
\end{cor}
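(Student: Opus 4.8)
The plan is to obtain the corollary by a one-line rearrangement of the three-term recurrence established in the preceding Proposition. First I would write down that recurrence,
$$
  z\,{\cal S}_\Omega[{\mathbb P}_n w](z) = C_n^z\,{\cal S}_\Omega[{\mathbb P}_{n-1} w](z) + A_n^z\,{\cal S}_\Omega[{\mathbb P}_n w](z) + B_n^z\,{\cal S}_\Omega[{\mathbb P}_{n+1} w](z),
$$
and move every summand except the last one to the left-hand side:
$$
  B_n^z\,{\cal S}_\Omega[{\mathbb P}_{n+1} w](z) = z\,{\cal S}_\Omega[{\mathbb P}_n w](z) - A_n^z\,{\cal S}_\Omega[{\mathbb P}_n w](z) - C_n^z\,{\cal S}_\Omega[{\mathbb P}_{n-1} w](z).
$$
Since ${\mathbb P}_n$ is a column vector of the orthogonal polynomials of degree $n$ and $A_n^z, B_n^z, C_n^z$ are the constant matrix coefficients acting on the left, and since ${\cal S}_\Omega$ is applied componentwise and hence commutes with left multiplication by a constant matrix, I would then left-multiply both sides by $(B_n^z)^{-1}$, which exists precisely by the hypothesis, to arrive at the asserted identity.

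A couple of points I would make explicit. The commuting of ${\cal S}_\Omega$ with $(B_n^z)^{-1}$ is immediate from linearity of the arc-length integral, so no analytic input beyond the Proposition is required. It is also worth noting, as the surrounding discussion does, why invertibility is a genuine hypothesis rather than something automatic: for general multivariate orthogonal polynomials the recurrence blocks $B_n$ are rectangular, typically with more columns than rows, because the dimension of the degree-$(n+1)$ block exceeds that of the degree-$n$ block, so $B_n^z$ need not be square, let alone invertible. In the wedge setting the blocks are $2\times 2$ for $n\ge 1$, which is exactly what makes this forward recurrence for the Stieltjes transforms well posed.

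There is essentially no obstacle here: all the content lies in the Proposition, and the corollary is pure linear algebra. The only thing I would take care to verify is the bookkeeping, namely that the recurrence in the Proposition has $B_n^z$ (and not $B_{n+1}^z$) multiplying ${\cal S}_\Omega[{\mathbb P}_{n+1} w](z)$; this follows directly from the stated normalization $x{\mathbb P}_n = C_n^x{\mathbb P}_{n-1} + A_n^x{\mathbb P}_n + B_n^x{\mathbb P}_{n+1}$ of the Jacobi recurrence.
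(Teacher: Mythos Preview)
Your argument is correct and matches the paper's treatment: the corollary is stated without proof, as it follows immediately from the preceding Proposition by isolating the $B_n^z\,{\cal S}_\Omega[{\mathbb P}_{n+1}w](z)$ term and left-multiplying by $(B_n^z)^{-1}$. Your remarks on why invertibility is a genuine hypothesis also align with the paper's surrounding discussion that the blocks being square is what makes this possible.
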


This means that we can calculate the Stieltjes transform in linear time by solving the recurrence equation, using explicit formulas for the $n=0$ and $n=1$ terms. Unfortunately, the results are numerically unstable for both $z$ on and off the contour. Here we sketch an alternative approach built on (F.W.J.) Olver's and Miller's algorithm, see \cite[Section 3.6]{DLMF} for references in the tridiagonal setting and \cite[Section 2.3]{GautschiOPs} for the equivalent application to calculating Stieltjes transforms of univariate orthogonal polynomials.

For $z$ off the contour, we can successfully and stably calculate the Stieltjes transform using a block-wise version of Olver's algorithm, which is equivalent to solving the $2n +1 \times 2n+1$ block-tridiagonal system
\begin{align*}
	\vc q_0 &= 1 \cr
	C_k^z \vc q_{k-1} + (A_k^z - z I)\vc q_k +  B_k^z \vc q_{k+1}& = 0 \qquad \hbox{for}  k = 1,2,\ldots,n-1 \cr
	\vc q_n &= \begin{pmatrix} 0\cr 0\end{pmatrix}
\end{align*}
where $\vc q_0 \in {\mathbb C}^1$ and $\vc q_k \in {\mathbb C}^2$ for $k=1,2,\ldots,n$.   Then
$${\cal S}_\Omega[{\mathbb P}_k w](z)  \approx {\cal S}_\Omega[{\mathbb P}_{0} w](z)  \vc q_k$$
Olver's algorithm consists of performing Gaussian elimination adaptively until a convergence criteria is satisfied.

For $z$ on or near the contour,  we no longer see quick decay in the Stieltjes transform (it is no longer a minimal solution to the recurrence), hence  $n$ must be prohibitively large.  Instead, we adapt Olver's algorithm in a vein similar to Miller's algorithm to allow for a non-decaying tail. We do so by calculating  two additional solutions  $\vc q^1$, and $\vc q^2$ (with the same block-sizes as before) satisfying:
\begin{align*}
	\vc q_0^j &= 1 \cr
	C_k^z \vc q_{k-1}^j + (A_k^z - z I)\vc q_k^j +  B_k^z \vc q_{k+1}^j& = 0 \quad \hbox{for}\quad  k = 1,2,\ldots,n-1 \cr
	 \qquad\vc q_{n}^1 = \begin{pmatrix} 1 \cr 0 \end{pmatrix}  \qquad &\hbox{and}\qquad \vc q_{n}^2 = \begin{pmatrix} 0 \cr 1 \end{pmatrix}.
\end{align*}
These three solutions avoid picking up the exponentially growing solution that forward recurrence does.  Thus we can solve a $3 \times 3$ system for constants $a,b$ and $c$ satisfying
\begin{align*}
a(z) \vc q_0 + b(z) \vc q_0^1 + c(z) \vc q_0^2 = {\cal S}_\Omega[{\mathbb P}_0 w](z) \\
a(z) \vc q_1 + b(z) \vc q_1^1 + c(z) \vc q_1^2 = {\cal S}_\Omega[{\mathbb P}_1 w](z) 
\end{align*}
We immediately have that
\begin{equation}\label{eq:olvermiller}
	{\cal S}_\Omega[{\mathbb P}_k w](z) = a(z) \vc q_k + b(z) \vc q_k^1 + c(z) \vc q_k^2  \qquad\hbox{for}\qquad k  = 0, 1, \ldots, n.
\end{equation}

\begin{figure}\label{fig:olvermiller}
 \begin{center}
 \includegraphics[width=.8\textwidth]{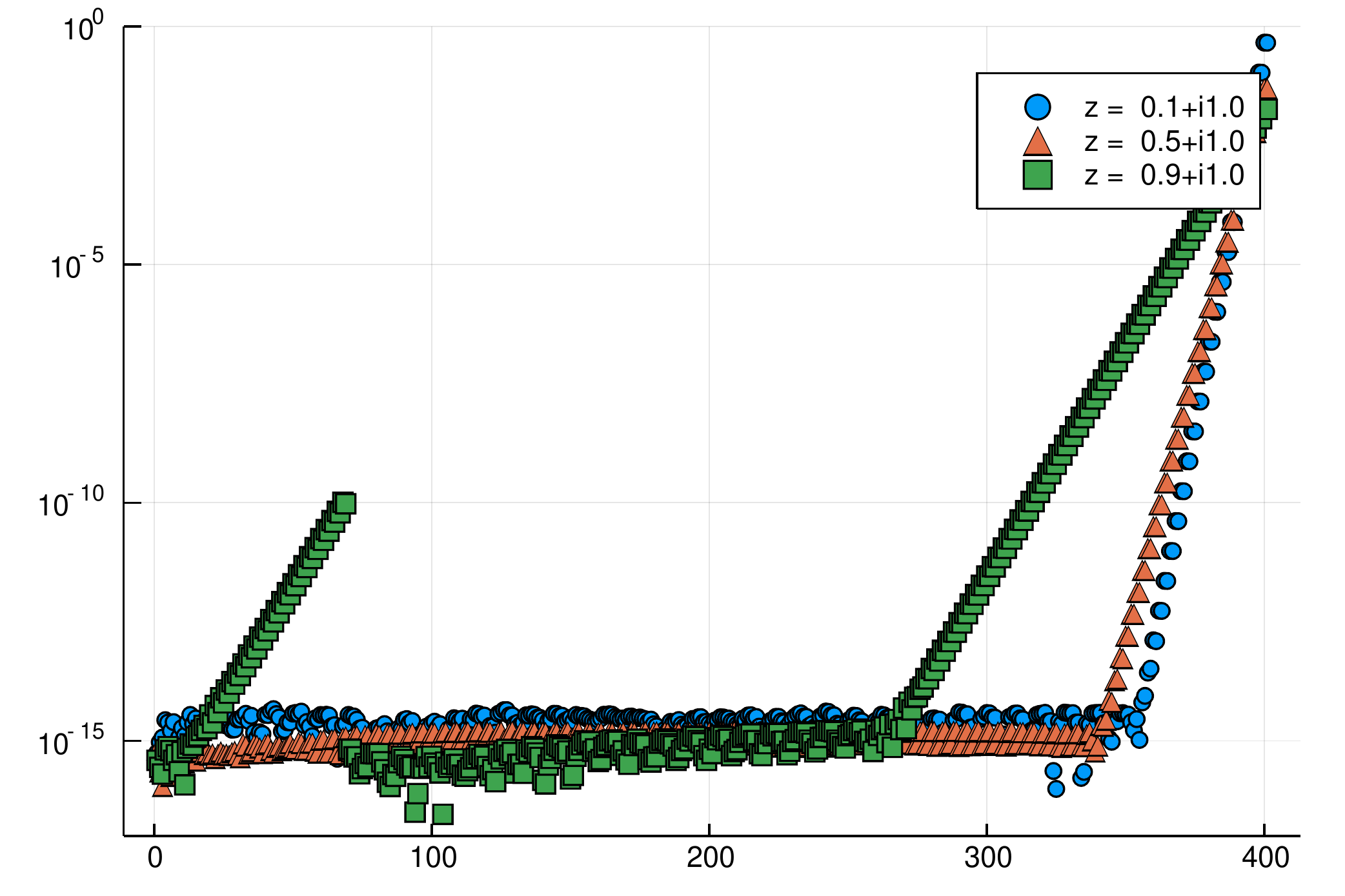}
  \caption{Numerical error in approximating the Stieltjes transform on the wedge, with $ \alpha = \beta = \gamma = 0$ using \eqref{eq:olvermiller}. Note that we need to choose $n$ larger than necessary to avoid the errors in the tail, and there are unresolved numerical errors if $z$ is close to the corner.}
 \end{center}
\end{figure}

While this holds true for all $n$, we note that in practice we need to choose $n$ bigger than the number of coefficients in order to observe numerical stability, see Figure~\ref{fig:olvermiller}. We also find that there are still stability issues near the corner. Resolving these issues is ongoing research.

%

\end{document}